\documentclass[12pt,reqno]{amsart}
\usepackage[dvipsnames]{xcolor}
\usepackage{graphicx}
\usepackage{tikz-cd}
\usepackage{amsmath, amssymb}
\usepackage{mathrsfs}
\usepackage{amsthm}
\usepackage{multicol}
\usepackage{color}
\usepackage{bm}
\usepackage{mathtools}
\usepackage{nameref}
\usepackage{url}
\usepackage[colorlinks=true,linkcolor=blue!60!black,citecolor=teal!80!black,urlcolor=RoyalBlue]{hyperref}
\usepackage[utf8x]{inputenc} 
\usepackage[left=2.7cm,right=2.7cm,top=2.65cm,bottom=2.65cm]{geometry}
\setlength{\parskip}{0.3em}
\usepackage{enumitem}
\usepackage{accents}
\usepackage{import}

\usepackage{faktor}

\newtheorem{thm}{Theorem}[section]

\newtheorem{prop}[thm]{Proposition}

\newtheorem{dfn&prop}[thm]{Definition and Proposition}
\newtheorem{dfn&thm}[thm]{Definition and Theorem}

\theoremstyle{definition}
\newtheorem{defn}[thm]{Definition}
\newtheorem{observation}[thm]{Observation}

\newtheorem*{structure}{Structure of the article}

\newtheorem{example}[thm]{Example}
\newtheorem{discussion}[thm]{}

\theoremstyle{remark}
\newtheorem*{Claim}{Claim}

\newtheorem*{remark}{Remark}
\newtheorem*{notations}{Notation}

\numberwithin{equation}{section}


\newenvironment{subproof}{\begin{proof}[Proof of claim.]}{%
\end{proof}}



\def\phi{\varphi}
\newcommand{\itin}{\operatorname{itin}}
\def\F{{\mathcal{F}}}

\def\M{{\mathcal{M}}}
\def\Exad{\mathscr{S}^{\mathbb{N}}}
\def\Exadpm{\mathscr{S}^{\mathbb{N}}_\pm}
\def\B{{\mathcal{B}}}
\def\CB{{\mathcal{CB}}}

\def\C{{\mathbb{C}}}
\def\D{{\mathbb{D}}}
\def\N{{\mathbb{N}}}
\def\Z{{\mathbb{Z}}}
\def\R{{\mathbb{R}}}

\def\Q{\mathbb {Q}}

\def\Or{{\mathcal{O}}}
\def\Ort{{\widetilde{\mathcal{O}}}}

\newcommand{\Ima}{\operatorname{Im}}
\newcommand{\Rea}{\operatorname{Re}}

\newcommand{\Crit}{\operatorname{Crit}}

\newcommand{\Addr}{\operatorname{Addr}}
\newcommand{\ul}{\underline}
\newcommand{\cl}{\overline}

\newcommand{\ultau}{\underline{\tau}}
\newcommand{\T}{\mathcal{T}}
\newcommand{\V}{\mathcal{V}}
\newcommand{\AV}{\operatorname{AV}}
\newcommand{\CV}{\operatorname{CV}}
\newcommand{\Orb}{\operatorname{Orb}}

\def\s{{\underline s}}

\newcommand*{\defeq}{\mathrel{\vcenter{\baselineskip0.5ex \lineskiplimit0pt
			\hbox{\scriptsize.}\hbox{\scriptsize.}}}%
	=}

\newcommand{\eqdef}{=\mathrel{\vcenter{\baselineskip0.5ex \lineskiplimit0pt
			\hbox{\scriptsize.}\hbox{\scriptsize.}}}}

\title{Topological dynamics of cosine maps}

\author[L. Pardo-Sim\'{o}n]{Leticia Pardo-Sim\'{o}n}

\address{Department of Mathematics \\ The University of Manchester \\ Manchester \\ M13 9PL \\ United Kingdom \\ 
	 \textsc{\newline \indent 
	   \href{https://orcid.org/0000-0003-4039-5556%
	     }{\includegraphics[width=1em,height=1em]{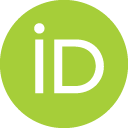} {\normalfont https://orcid.org/0000-0003-4039-5556}}
	       }}
\email{leticia.pardosimon@manchester.ac.uk}
\thanks{2020 \textit{Mathematics Subject Classification}. Primary 37F10. Secondary 30D15.\\
       \textit{Key words:} transcendental entire function, cosine family, topological model, dynamic rays}
	
\begin{document}

\begin{abstract} 
The set of points that escape to infinity under iteration of a cosine map, that is, of the form $C_{a,b} \colon z \mapsto ae^z+be^{-z}$ for $a,b\in \C^\ast$, consists of a collection of injective curves, called \textit{dynamic rays}. If a critical value of $C_{a,b}$ escapes to infinity, then some of its dynamic rays overlap pairwise and \textit{split} at critical points. We consider a large subclass of cosine maps with escaping critical values, including the map $z\mapsto \cosh(z)$. We provide an explicit topological model for their dynamics on their Julia sets. We do so by first providing a model for the dynamics near infinity of any cosine map, and then modifying it to reflect the splitting of rays for functions of the subclass we study. As an application, we give an explicit combinatorial description of the overlap occurring between the dynamic rays of $z\mapsto \cosh(z)$, and conclude that no two of its dynamic rays land together. 
\end{abstract}
\maketitle

\section{Introduction}\label{sec_intro}

The theory of iteration of transcendental entire functions  $f \colon \C \rightarrow \C$ dates back to Fatou's seminal work from 1926; \cite{Fatou}. The locus of stable behaviour of such a function – more precisely, the set of $z\in \C$ at which the family $\{f^n\}_{n\in\N}$ is equicontinuous with respect to the spherical metric – is today called  the \textit{Fatou set} $F(f)$, while its complement $J(f)\defeq  \C \setminus F(f)$ is the \textit{Julia set}. We are also interested in the \textit{escaping set}
\begin{equation*}
I(f)\defeq \{z\in \C \colon f^n(z)\to \infty \text{ as } n\to \infty\},
\end{equation*}
as $J(f)=\partial I(f)$ and, in the cases we will consider, $J(f)=\overline{I(f)}$; \cite{eremenkoclassB}.

Fatou observed that the Julia sets of certain sine functions contain infinitely many curves to infinity. In the 1980s, Devaney, with a number of co-authors, studied this phenomenon further, showing the existence of such curves for some functions in the \textit{exponential family} $E_\kappa \colon z \mapsto e^z+ \kappa$, $\kappa \in \C \setminus \{0\}$. These curves, that escape uniformly to infinity, are now known as \textit{(Devaney) hairs} or \textit{dynamic rays}, and they provide a foliation of the escaping set $I(E_\kappa)$ for all $\kappa$; \cite{Schleicher_Zimmer_exp}. See the formal definition of \textit{dynamic ray} in Definition~\ref{def_ray}.

If the asymptotic value of $E_\kappa$, i.e., its parameter $\kappa$, converges to an attracting or parabolic cycle, then all dynamic rays of $E_\kappa$ \textit{land}, that is, have a unique finite accumulation point; \cite{lasseTopExp}. This provides a total description of $J(E_\kappa)$ as a collection of unbounded escaping curves together with their landing points; compare to \cite{AartsOversteegen, mashael_thesis} for further topological characterizations. However, there is no such complete description of $J(E_\kappa)$ when $\kappa$ escapes to infinity. In contrast, Rempe showed in \cite{lasse_nonlanding} that for escaping parameters, the accumulation sets of uncountably many dynamic rays of $E_\kappa$ are indecomposable continua containing the rays themselves. 

The existence of dynamic rays is now known for a much larger class of functions  belonging to the  \textit{Eremenko-Lyubich class $\B$}, which consists of all transcendental entire functions with bounded singular set. Recall that for an entire map $f$, its \textit{singular set} $S(f)$ is the closure of the set of its asymptotic and critical values. More precisely, it is shown in \cite{RRRS} that for certain $f\in \B$, including all those of finite order, some iterate of every $z\in I(f)$ can be connected to infinity by a dynamic ray. As in the exponential case, the landing behaviour of rays for such an $f$ depends on the behaviour under iteration of their singular set, that is, on their \textit{postsingular set} $P(f)\defeq \overline{\bigcup^\infty_{n=0} f^n(S(f))}$. Note that for $f$ a postsingularly bounded entire function, $P(f)$ is nicely separated from infinity, where dynamic rays start. This has played a crucial role when proving that all dynamic rays of certain $f \in \B$ with bounded postsingular set land, e.g., \cite{Schleicher_entire, lasseRigidity, helenaSemi,mashael}. 
 
In this paper we are interested in the case when critical values escape. Note that  the singular set of any map in the \textit{cosine family}, that is, of the form \[C_{a,b} \colon z\mapsto ae^z+be^{-z} \quad \text{ for }a,b\in \C\setminus \{0\},\] 
consists of two critical values, with their preimages being critical points of local degree~$2$. The explicit nature of this family will allow us to obtain a complete description of the Julia set of certain cosine maps with escaping critical values. But first, we note that, as in the exponential case, when both critical values of $C_{a,b}$ belong to an attracting basin all dynamic rays land \cite{RRRS}, and the same holds when $P(C_{a,b})$ is strictly preperiodic \cite{dierkParadox}.

If, instead, a critical value of $C_{a,b}$ escapes to infinity, dynamic rays \textit{split}  at critical points, and the structure of $J(C_{a,b})$ is much more complicated. To illustrate this, consider the map $ C_{1/2,1/2} \colon z\mapsto \cosh(z)$, whose critical values $-1$ and $1$ escape to infinity in $\R^+$. Note that $0$ is a critical point, and it is easy to check that $(-\infty, 0]$ and $[0, \infty)$ are pieces of dynamic rays. The vertical segments $[0, -i\pi /2]$ and $[0, i\pi /2]$ are mapped univalently to $[0, 1] \subset \R^+$, and thus, the union of each segment with either  $(-\infty, 0]$ or $[0, \infty)$ forms a different piece of ray. This structure can be interpreted as four pieces of rays that partially overlap pairwise. Their endpoints $-i\pi /2$ and $i\pi/2$ are preimages of $0$, and so the structure described has a preimage attached to each of them; see Figure~\ref{fig:rays_cosh}. This leads again to two possible extensions of each of them. Our results will imply that in this case, further extensions can be performed in a systematic fashion that converges to four dynamic rays that land.

Analogous results will be achieved for those cosine maps whose singular orbits are ``sufficiently spread'' in the following sense: 
\begin{defn}\label{def_sps_intro} A cosine map $f$ is \emph{strongly postcritically separated (sps)} if $P(f)\cap F(f)$ is compact and there exists $\epsilon>0$ such that for all distinct $z,w\in P(f)\cap J(f)$, $\vert z-w\vert\geq \epsilon \max\{\vert z \vert, \vert w \vert\}$.
\end{defn}

\begin{remark}
A more general notion of strongly postcritically separated maps is introduced in \cite{mio_orbifolds}, see Definition \ref{def_strongps}, where it is shown that they expand a suitable \textit{orbifold metric} in a neighbourhood of their Julia sets. This is key to our results.
\end{remark}

The following theorem shows how for cosine and exponential maps with escaping singular values, their different nature, being critical rather than asymptotic values, changes drastically the topology of their respective Julia sets. 
\begin{thm} \label{thm_landing} Let $f$ be a strongly postcritically separated cosine map. Then, every dynamic ray of $f$ lands, and every point in $J(f)$ is either on a dynamic ray or it is the landing point of at least one such ray.
\end{thm}

We remark that Theorem \ref{thm_landing} will follow from our results but it is not new, as it is also a consequence of \cite[Theorem~1.2]{mio_splitting}. More specifically, in \cite{mio_splitting} the same result is obtained for a more general class of functions in $\B$ with dynamic rays. In turn, that result is a consequence of a stronger one: \cite[Theorem 1.4]{mio_splitting} provides an abstract topological model for the action of any such $f$ on its Julia set, a model that is based on the dynamics of an entire map $g$ on its \textit{parameter space}, i.e., quasiconformally equivalent to $f$. More precisely, in order to reflect the splitting of rays as described for $z\mapsto \cosh(z)$, two copies of $J(g)$ are considered as a model space, namely $J(g)_\pm\defeq J(g)\times \{-,+\}$, with some special topology that preserves the order of rays at infinity. Then, the model function $\tilde{g}\colon J(g)_\pm \to J(g)_\pm$, acting as $g$ on the first coordinate and as the identity on the second, is shown to be semiconjugate to $f\vert_{J(f)}$.

As the main result of this paper, using the general framework that  \cite{mio_splitting} provides, we construct in Theorem \ref{thm_strong_cosine_intro} a simpler topological model for the action of any sps cosine map on its Julia set. Before we give more details of our model, we highlight the main advantages that it presents over the model from \cite{mio_splitting}:
\begin{itemize}
\item Since the model function in \cite{mio_splitting} acts on its first coordinate as the restriction of an entire function to its Julia set, its dynamics are still very complicated. Instead, the dynamics of our model function will be much simpler, coding the exponential growth of real parts of points under cosine maps, and the location of their imaginary parts with respect to a Markov-type partition of the plane.
\item The explicit nature of the maps considered allows us to provide sharper results: in \S 5 we improve our model and provide a complete description of the topological dynamics of $z\mapsto \cosh(z)$. In particular, we are able to conclude that no two of its dynamic rays land together. 
\end{itemize}

As a first step on the construction of our model for sps cosine maps, we provide a model that relates to the dynamics near infinity of all cosine maps. We note that the idea of relating the dynamics of one map to those of a simpler one has been successfully exploited in the polynomial case using Böttcher's Theorem; see Douady's Pinched Disk model \cite{douady_pinchedmodel}. However, due to the essential singularity at infinity, for transcendental maps Böttcher's Theorem no longer applies, and so our techniques are different. 
Given that cosine maps act like the exponential map, up to a constant factor, in left and right half-planes sufficiently far away from the imaginary axis, we start by constructing a \textit{topological model for escaping cosine dynamics} inspired by Rempe's model for exponential maps \cite{lasseTopExp}. Roughly speaking, this model is formed by a topological space $J(\F)$ consisting of a collection of disjoint curves, and a continuous map $\F: J(\F)\rightarrow J(\F)$, where $\F$ codes the exponential growth of the real parts of points under cosine maps, as well as the orbits of their imaginary parts with respect to a Markov-type partition of the plane, see Definition~ \ref{defn_modelcosine}. In particular, its dynamics are straightforward.

The following theorem is particularly strong when $f$ is of \textit{disjoint type}, that is, when its Fatou set  is an attracting basin and $P(f)\subset F(f)$. For each $R>0$, we denote
\begin{equation}
J_R(f)\defeq \{z\in J(f)\colon \vert f^n(z)\vert \geq R \text{ for all } n\geq 1\}.
\end{equation}

\begin{thm}[Model for cosine dynamics] \label{thm_conjcosine_intro} Let $f$ be a cosine map and let $J(\F)$ and $\F$ be as defined as above. Then there exists a constant $R\geq 0$ and a continuous map $\Phi\colon J(\F )\rightarrow J(f)$ such that $\Phi \vert_{\Phi^{-1}(J_R(f))}$ is a homeomorphism, 
	$$ \Phi \circ \F=f \circ \Phi \quad \text{ on } \quad \Phi^{-1}(J_R(f))$$  and $\Phi(I(F)) \subset I(f)$. If in addition $f$ is of disjoint type, then  $\Phi\colon J(\F)\to J(f)$ is a homeomorphism and $\Phi(I(\F))=I(f).$
\end{thm}

Compare to \cite[Theorems 4.2 and 9.1]{lasseTopExp} for similar results on the exponential family. Note that any escaping point of any cosine map $f$ eventually enters $J_R(f)$ for every $R>0$, and so 
Theorem \ref{thm_conjcosine_intro} provides a model for its escaping dynamics.

However, we aspire to describe the dynamics of every cosine map in the whole of its Julia set. In particular, in the presence of escaping critical values, any model must reflect the  splitting of dynamic rays at (preimages of) critical points, as described for the map $z\mapsto \cosh(z)$ before. Note that our analysis on this map, where each ray tail ``splits into two'' at critical points, suggests considering two copies of each ray, and mapping each copy to one of the two possible extensions. With that aim, we define the model space for sps maps as $J(\F)_\pm\defeq J(\F)\times \{-,+\}$ with a topology that preserves the circular order of rays at infinity; see Definition \ref{defn_modelspace_cos}. The model map $\widetilde{\F}\colon J(\F)_\pm \rightarrow J(\F)_\pm$ is defined as $\F$ on the first coordinate and as the identity on the second. Then, our main result is as follows.
\begin{thm}[Model for the dynamics of strongly postcritically separated cosine maps]\label{thm_strong_cosine_intro} Let $f$ be in the cosine family and strongly postcritically separated. Then, there exists a continuous surjective function $\hat{\phi} \colon J(\F)_{\pm} \rightarrow J(f)$ so that $f\circ\hat{\phi} = \hat{\phi}\circ \widetilde{\F}$ on $J(\F)_{\pm}$.
\end{thm}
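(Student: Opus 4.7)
The plan is to extend the semiconjugacy $\theta$ from Theorem~\ref{cor_modelforall_intro} to all of $J(\F)_\pm$, using the structural fact (cited from \cite{mio_splitting}) that for an sps cosine map every point of $J(f)$ is either on an escaping dynamic ray or is the landing point of one. First, I would define $\hat{\phi}$ on the subset of $J(\F)_\pm$ that models escaping rays by pulling back $\theta$ along $\F$-orbits: if $x\in J(\F)$ satisfies $\F^n(x)\in\mathcal{J}$ for some $n\geq 0$, then $\theta(\F^n(x))\in J_R(f)$ has finitely many $f^n$-preimages, and $\hat{\phi}(x,\pm)$ should be the preimage lying on the appropriate branch of dynamic ray. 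At every step of the pullback that would cross an iterated preimage of a critical value, the two candidate inverse branches correspond exactly to the two ``sides'' meeting at the associated critical point; this is how the $\pm$ coordinate is consumed to resolve the ambiguity. When no such encounter occurs along the orbit, the two preimages agree and $\hat{\phi}(x,+)=\hat{\phi}(x,-)$.

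Second, I would extend $\hat{\phi}$ to points of $J(\F)_\pm$ modelling landing points of non-escaping $z\in J(f)$. By the structure theorem for sps maps, each such $z$ is the landing point of a dynamic ray (or of a pair of rays that meet at a critical point), and the sps spread condition ensures the landing is uniform in a sense strong enough that for each such $z$ there is a well-defined element (or pair of elements, in the critical case) of $J(\F)_\pm$ whose associated ray lands at $z$. Define $\hat{\phi}$ to send it (them) to $z$, so the $\pm$-ambiguity at critical landing points is absorbed into the model.

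Third, I would verify the three required properties. The semiconjugacy $f\circ\hat{\phi}=\hat{\phi}\circ\widetilde{\F}$ holds on the escaping part by construction (using $\theta\circ f=\F\circ\theta$ and the fact that $\widetilde{\F}$ fixes the second coordinate, so the side-selection at each critical encounter is shifted coherently under the orbit) and extends to landing points by density and continuity. Continuity on the escaping part follows from continuity of $\theta$ and of the local univalent inverse branches of $f$, combined with the definition of the topology on $J(\F)_\pm$ as the order-preserving refinement of $J(\F)$. Continuity at landing points follows from the uniform landing afforded by the sps condition, together with the fact that the circular order of rays at infinity is preserved in both source and target. Surjectivity is exactly the content of the structure theorem: every $z\in J(f)$ is covered either by the escaping part or by the landing-point extension.

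The main obstacle I expect is the careful bookkeeping of the $\pm$ coordinate across the whole construction: one must show that at each critical encounter the two preimage branches can be assigned to $+$ and $-$ consistently with the topology of $J(\F)_\pm$, that the assignment propagates coherently under iteration (so that the semiconjugacy equation holds), and that it is stable under the limits used to define $\hat{\phi}$ at landing points, especially those landing at critical points. This will plausibly be handled by an induction on the preimage depth, tracking the local order of rays at each level and invoking the sps estimate to rule out degenerate accumulations of critical-point encounters.
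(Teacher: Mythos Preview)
Your approach differs substantially from the paper's. The paper does not build $\hat{\phi}$ by extending the map $\theta$ of Theorem~\ref{cor_modelforall_intro} via pullbacks and then to landing points. Instead it factors through a disjoint-type map: choose $\lambda$ with $g\defeq\lambda f$ of disjoint type, take the homeomorphism $\hat{\Phi}\colon J(\F)_\pm\to J(g)_\pm$ induced by the disjoint-type case of Theorem~\ref{cor_modelforall_intro} (this is how the topology $\tau_\F$ on $J(\F)_\pm$ is \emph{defined}, see Definition~\ref{defn_modelspace_cos}), and compose with the semiconjugacy $\phi\colon J(g)_\pm\to J(f)$ supplied as a black box by \cite{mio_splitting} (recorded here as Definition and Theorem~\ref{defnprop_spsinCB}). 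The whole proof is then a short diagram chase; continuity, surjectivity, the handling of critical encounters, and landing behaviour are all delegated to \cite{mio_splitting}.

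Your direct construction is, in outline, the alternative the paper explicitly acknowledges in the remark following Theorem~\ref{thm_strong_cosine}: one can bypass the disjoint-type intermediary and mimic the proof of \cite[Theorem~10.6]{mio_splitting}. But your sketch is missing the engine that makes that argument work. Convergence and continuity in \cite{mio_splitting} rely on the fact that an sps map uniformly expands a hyperbolic \emph{orbifold} metric \cite[Theorem~1.1]{mio_orbifolds}; Euclidean expansion is unavailable near the (escaping) postsingular set. Your appeals to ``uniform landing afforded by the sps condition'' and ``continuity of local univalent inverse branches'' do not supply a substitute, so as written you cannot conclude that the pullback limits exist or that the extension to endpoints is continuous. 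The $\pm$-bookkeeping you flag as the main obstacle is real but secondary: in the paper's setup it is absorbed into the topology $\tau_\F$, which is pulled back from $(J(g)_\pm,\tau_J)$ rather than described intrinsically as an ``order-preserving refinement'', so no separate induction on preimage depth is needed.
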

\noindent See Theorem \ref{thm_strong_cosine} for a more detailed version of Theorem \ref{thm_strong_cosine_intro}. In particular, Theorem \ref{thm_landing} will follow readily.

Some examples of sps cosine maps that have already appeared in the literature of holomorphic dynamics are $z\mapsto \cosh(z)$ and $z\mapsto \cosh^2(z)$, see \cite{bishop2015,david_Shishikura_Wandering,ripponFast}. In Section \ref{sec_cosh_indent} we improve Theorem \ref{thm_strong_cosine_intro} for these maps by modifying our model as to obtain a conjugacy. In particular, we provide an explicit combinatorial description of the overlap occurring between their dynamic rays  and conclude that for both functions, no two of their dynamic rays land together.

In order to achieve our results on the map $z\mapsto \cosh(z)$, we introduce the notion of \textit{itineraries} as sequences that encode the orbits of points on its Julia set with respect to a dynamical partition, an idea already used, for example, in \cite{dierkParadox, helena_thesis}. In Appendix \ref{sec_landing}, we extend this concept to a larger subclass  of functions in $\B$ with dynamic rays. Namely, to all strongly postcritically separated maps that belong to the class $\CB$, the latter including all functions that are a finite composition of class $\B$ functions of finite order. This new tool will allow us to provide in Theorem \ref{thm_landing_intro} criteria for their dynamic rays landing together.

\begin{structure}
	In Section \ref{sec_prelim_cosine} we review some basic properties of cosine dynamics and fix for each  cosine map a choice of \textit{external addresses}, a combinatorial tool used in the study of functions in $\B$. We define the model $(J(\F), \F)$ in Section  \ref{sec_model_disjoint}, study its properties and prove Theorem \ref{thm_conjcosine_intro}. Section  \ref{sec_model_sps} includes our results on sps cosine maps. Namely, we define the model $(J(\F)_\pm, \widetilde{\F})$ and prove Theorems \ref{thm_strong_cosine_intro} and  \ref{thm_landing}. Next, in Section  \ref{sec_cosh_indent} we sharpen our main result for the maps $z \mapsto \cosh(z)$ and $z \mapsto \cosh^2(z)$ and provide a detailed description of their topological dynamics. Finally, we provide combinatorial criteria for dynamic rays of sps functions in $\CB$ landing together in Appendix \ref{sec_landing}.
\end{structure}

\noindent\textbf{Basic notation.} As used throughout this section, the Fatou, Julia and escaping set of an entire function $f$ are denoted by $F(f)$, $J(f)$ and $I(f)$ respectively. The set of critical values is $\CV(f)$, that of asymptotic values is $\AV(f)$, and the set of critical points will be $\Crit(f)$. The set of singular values of $f$ is $S(f)$, and $P(f)$ denotes the postsingular set. Moreover, $P_{J}\defeq P(f)\cap J(f)$ and $P_{F}\defeq P(f)\cap F(f)$. A disc of radius $\epsilon$ centred at a point $p$ will be $\D_\epsilon(p)$, and $\C^\ast \defeq \C\setminus \{0\}$. We will indicate the closure of a domain $U$ by $\overline{U}$, that must be understood to be taken in $\C$. For a holomorphic function $f$ and a set $A$, $\Orb^{-}(A)$ and $\Orb^{+}(A)$ are the backward and forward orbit of $A$ under $f$, respectively. That is, $\Orb^{-}(A)\defeq \bigcup^{\infty}_{n=0} f^{-n}(A)$ and $\Orb^{+}(A)\defeq \bigcup^{\infty}_{n=0} f^{n}(A).$

\noindent\textbf{Acknowledgements.} I am very grateful to Lasse Rempe for his guidance and support, as well as for providing computer-generated pictures, see Figures 2-5. I also thank Dave Sixsmith, David Martí-Pete and the referee for much helpful feedback.

\section{Cosine dynamics and external addresses}\label{sec_prelim_cosine}
We start by revising basic properties of cosine maps. We refer to \cite{dierkCosine, dierkParadox} for extensive work on their dynamics. Note also that cosine maps arise as lifts of holomorphic self-maps of $\C^*$; see \cite[Corollary 1.5]{nuria_david}. Recall that for a holomorphic map $f:\widetilde{S}\rightarrow S$ 
between Riemann surfaces, the \emph{local degree} of $f$ 
at a point $z_0\in \widetilde{S}$, denoted by $\deg(f,z_0)$, is the unique integer $n\geq 1$ 
such that the local power series development of $f$ is of the form
\begin{equation*}
f(z)=f(z_0) + a_n (z-z_0)^n + \text{(higher terms)},
\end{equation*}
where $a_n\neq 0$. Thus, $z_0\in \C$ is a critical point of $f$ if and only if $\deg(f,z_0)>1$. We say that $f$ has \textit{bounded criticality} on a set $A$ if $\AV(f) \cap A=\emptyset$ and there exists a constant $M<\infty$ such that $\deg(f,z)<M \text{ for all } z \in A.$
\begin{discussion}[Basic properties of cosine maps]\label{dis_basic_cosine}
	Each cosine map $f(z)\defeq ae^z+be^{-z}$ with $a,b \in \C^\ast $ is $2\pi i$-periodic and has exactly two critical values, namely $\pm 2\sqrt{ab}$. Furthermore, any preimage of a critical value is a critical point of local degree $2$, and hence both critical values are \textit{totally ramified}. More specifically, 
	$$\Crit(f)=\left\{\frac{1}{2}\log\left(\frac{a}{b}\right)+ \pi i n : n\in \Z \right\},$$
	where the branch of the logarithm is chosen such that $\vert \Ima(\frac{1}{2}\log(\frac{a}{b}))\vert\leq \pi/2$. 
	It is easy to check that $f$ has no asymptotic values, and thus, $S(f)\eqdef\{v_1, v_2\}$, with $v_i=\pm 2\sqrt{ab}$ and signs chosen so that $v_1$ is the image of $\frac{1}{2}\log(\frac{a}{b})+2\pi i\Z$, while $v_2$ is the image of $\frac{1}{2}\log(\frac{a}{b})+\pi i+2\pi i\Z$. In particular, since $S(f)$ is bounded and $f$ is of order of growth one, the Julia set of any disjoint type cosine map is a \textit{Cantor bouquet}. Roughly speaking, a Cantor bouquet consists of an uncountable collection of curves to infinity satisfying a certain density condition; see \cite[Definition 2.1]{lasseBrushing}. Moreover, by the Denjoy-Carleman-Ahlfors theorem, for any choice of bounded domain $D\supset S(f)$, the number of connected components of $f^{-1}(\C \setminus D)$, called \textit{tracts}, is at most two. Note that for any such domain $D$, $f$ maps points for which the absolute value of their real part is sufficiently large, to $\C \setminus D$. Hence, a left and a right half plane are contained in the union of tracts, which implies that $f$ has at least two, and hence exactly two, tracts for any choice of $D$.
\end{discussion}
\begin{observation}[Parameter space of cosine maps]\label{obs_spacecosine}
All cosine maps belong to the same parameter space; that is, any two cosine maps are quasiconformally equivalent. To see this, let $f(z)\defeq ae^z+be^{-z}$ and $g(z)\defeq ce^z+de^{-z}$ for $a,b,c,d \in \C^\ast$. Consider the linear maps $\psi(z)\defeq z+ \log \sqrt{\frac{bc}{ad}}$ and $\phi(z)\defeq \sqrt{\frac{bc}{ad}} z$, which are clearly quasiconformal. Then, for all $z\in \C$,
	$$ (f\circ \psi)(z)=a e^z\sqrt{\frac{bc}{ad}}+be^{-z} \sqrt{\frac{ad}{bc}}=c e^z\sqrt{\frac{ab}{cd}}+de^{-z} \sqrt{\frac{ab}{cd}}=(\phi\circ g)(z).$$ 
\end{observation}

Consequently, in order to prove the second part of Theorem \ref{thm_conjcosine_intro}, by \cite[Theorem~3.1]{lasseRigidity}, it suffices to construct a conjugacy between $\F\colon J(\F)\rightarrow J(\F)$ and any specific disjoint type cosine map. We could have followed this approach, but for the sake of generality, our proof will relate to any disjoint-type cosine map. 

Let us fix a cosine function $f(z)\defeq ae^z+be^{-z}$ with $a,b\in \C^\ast$. We will make use of some estimates from \cite{dierkCosine} that require of constants related to the following:
	\begin{equation*}
	\mathcal{K}(f)\!\defeq \!\max \! \left\{\!\left(\sqrt{\left\vert \frac{2b}{a} \right\vert} +\!\sqrt{\left\vert\frac{2a}{b} \right\vert}\right)\!(\vert a \vert + \vert b \vert), 8\vert ab \vert, 1, \frac{1}{2}\ln \left\vert \frac{2b}{a}\right\vert , \frac{1}{2}\ln\left\vert \frac{2a}{b}\right\vert, \ln \frac{16}{\vert a b \vert} \right\}\!.
	\end{equation*} 

More precisely, recall from \ref{dis_basic_cosine} that for any cosine map $f$ and for any choice of Jordan domain $D\supset S(f)$, $f^{-1}(\C\setminus D)$ has two connected components, that is, two tracts. We will choose $D$ large enough as to guarantee \textit{Euclidean expansion} within tracts, that is, that the modulus of the derivative of any point in the tracts is large enough. 
Let us choose $R>\mathcal{K}(f)$ big enough such that 
$$ \bigcup_{k\in \Z} \{z+2\pi k i \colon z \in \D_R \} \supset \{z \in \C \colon \vert \Rea z \vert\leq \mathcal{K}(f)\} $$ 
and $f(\D_R)\supset \D_{\mathcal{K}(f)}$. In particular, the domain $D\defeq f(\D_R)$ contains $S(f)$, and so we can define a pair of tracts $\T_f$ as the connected components of $f^{-1}(\C\setminus D)$. Then, by definition, 
\begin{equation}\label{eq_exp_tracts}
	\T_f \subseteq \{z\in \C \colon \vert \Rea (z)\vert >\mathcal{K}(f) \} \quad \text{ and } \quad S(f) \subset \D_{\mathcal{K}(f)} \subset \C \setminus f(\T_f).
\end{equation}
A simple calculation shows that for any  $z\in \C$ such that  $\vert  \Rea z\vert >\mathcal{K}(f)$,
\begin{equation}\label{eq_expansion}
\vert f' (z) \vert >2;
\end{equation}
see \cite[Lemma~3.6]{dierkCosine}. Hence, we say that $\T_f$ are \textit{expansion tracts}.

\begin{discussion}[Fundamental domains and inverse branches]\label{dis_inversenormalized} Let us fix a cosine map $f(z)\defeq ae^z+be^{-z}$ for some $a,b\in \C^\ast$, and let $\T_f$ be a pair of expansion tracts. Let $S(f)\eqdef\{v_1, v_2\}$, with $v_1$ and $v_2$ labelled according to \ref{dis_basic_cosine}. In particular, by  \eqref{eq_exp_tracts},  $S(f)\subset D\defeq \C \setminus f(\T_f)$ and $\{z\in \C \colon \vert \Rea (z)\vert <\max(\vert \Rea v_1\vert , \vert \Rea v_2 \vert )\}\subset \C\setminus \T_f$. If $\Ima(v_1)> \Ima(v_2)$, we define $\delta$ as the vertical straight line starting at $v_1$ in upwards direction restricted to $\C\setminus D$. If on the contrary $\Ima(v_1)< \Ima(v_2)$, $\delta$ is the downwards vertical line joining $v_2$ to infinity restricted to $\C\setminus D$. In any case, $\delta \subset \C\setminus (\T_f\cup D)$, and so we can define \textit{fundamental domains} for $f$ as the connected components of $\T_f\setminus f^{-1}(\delta)$; see \cite[\S 2]{mio_signed_addr} for the definition of fundamental domains in a more general setting. Since $f$ is in the cosine family, by definition, the image of points in $\R$ whose modulus is large enough have uniformly large modulus, and so they must be totally contained in a fundamental domain. By $2\pi i$-periodicity of $f$, the same holds for all their $2\pi i$-translates. Hence, for each $n\in \Z$, we denote by $F_{(n,R)}$ the fundamental domain that contains an unbounded subset of $2\pi n i + \R^+$, and by $F_{(n,L)}$ the fundamental domain that contains an unbounded subset of $2\pi ni + \R^-$. 
	Since $f$ maps each fundamental domain to its image $f(\T_f) \setminus \delta$ as a conformal isomorphism, see \cite[Proposition 2.19]{mio_thesis}, we can define for each $(n, \ast) \in \Z\times\{L, R\}$ the inverse branch
	\begin{equation}\label{eq_inversecosine}
	f^{-1}_{(n, \ast)}\colon f(\T_f)\setminus \delta \rightarrow F_{(n, \ast)}, 
	\end{equation}
	which in particular is a bijection.
\end{discussion}

\begin{observation}[Horizontal straight lines contained in fundamental domains]\label{obs_constantA} Following \ref{dis_inversenormalized}, by construction, there is a constant $A>\mathcal{K}(f)$ so that for all $n\in \Z$,
	\begin{align*}
	\{z: \Rea z<- A \text{ and } \Ima z=2\pi n\} &\subset F_{(n, L)}\qquad\text{and} \\
	\{z: \Rea z>A \text{ and } \Ima z=2\pi n\} &\subset F_{(n, R)}. 
	\end{align*}
\end{observation}

We note that our choice of fundamental domains in \ref{dis_inversenormalized} agrees with the partition defined in \cite[Sections 1 and 2]{dierkCosine}, where the maps ``$f^{-1}_{(n, \ast)}$'' are labelled as ``$L_s$''. Then, the estimates appearing in \cite{dierkCosine} regarding this partition and the maps from \eqref{eq_inversecosine} apply to our setting. In particular, we will use the following:
\begin{prop}[Properties of the partition~{\cite[Lemmas 2.3 and 3.4]{dierkCosine}}] \label{prop_fromRS08} In the setting described in \ref{dis_inversenormalized}, the following hold:
	\begin{itemize}
		\item If $z,w \in F_{(n,\ast)}$ for some $(n, \ast) \in \Z\times\{L, R\}$, then $\vert \Ima z-\Ima w\vert< 3\pi$ and moreover $\vert \Ima z-2\pi n\vert< 3\pi$.
		\item If $w \in f(\T_f) \setminus \delta$, then for each $(n, \ast) \in \Z\times\{L, R\}$ there exists $r^\star\in \C$ with $\vert r^\star \vert<1$ and such that 
		\begin{align*}
		f^{-1}_{(n, \ast)}(w)&\defeq \renewcommand{\arraystretch}{1.5}\left\{\begin{array}{@{}l@{\quad}l@{}} \log(w)-\log a + 2\pi i n + r^\star & \text{ if }\quad \ast=R;\\
		-\log(w)+ \log b + 2\pi i n + r^\star & \text{ if }\quad \ast=L. 
		\end{array}\right.\kern-\nulldelimiterspace
		\end{align*}
	\end{itemize}	
\end{prop}

\begin{defn}[External addresses]\label{def_addresses}
Let $f$ be a cosine map. An \textit{external address} is an infinite sequence $\s=F_0F_1F_2\ldots$ of the fundamental domains specified in \ref{dis_inversenormalized}. If $\s$ is such an external address, we denote
$$  J_\s= \{z\in J(f) \colon f^n(z)\in F_n \text{ for all } n\geq 0\}. $$
We let $\Addr(f)$ be the set of all $\s$ for which $J_\s$ is not empty, that we endow with the usual lexicographic cyclic order topology; see \cite[2.13]{mio_signed_addr} for details.
\end{defn}

\begin{observation}\label{rem_Jg}
If $g$ is of disjoint type, then  $J(g)= \bigcap_{k\geq 0}g^{-k}( \T_g)$ and $g^{-n}(\T_g) \subset \T_g$ for all $n\geq 0$; see \cite[Proposition 3.2]{lasse_arclike}. In particular, 
$$J(g)=\bigcup_{\s\in \Addr(g)} J_\s. $$
\end{observation}

\begin{notations}
	For each element $(n, \ast)\in \Z\times\{L, R\}$, we denote $\vert (n, \ast)\vert \defeq \vert n\vert $ and $\{(n, \ast)\}\defeq n$.
\end{notations}

\section{A model for cosine dynamics}\label{sec_model_disjoint}
\begin{discussion}[Topological space $(\M, \tau_\M)$]\label{dis_topology_model_cos}
	Consider the set
	$$\M\defeq [0,\infty)\times(\Z\times\{L, R\})^\N.$$
	Let ``$<_{_\Z}$'' be the usual linear order on integers. We define a total order in the set $\Z\times\{L, R\}$ as follows:
	\begin{align}\label{eq_order_LR}
	(n,\ast)<(m,\star)& \iff \renewcommand{\arraystretch}{1.5}\left\{\begin{array}{@{}l@{\quad}l@{}} \ast=R=\star & \text{and} \quad n<_{_\Z}m, \quad \text{ or }\\
	\ast=L=\star & \text{and} \quad m<_{_\Z}n, \quad \text{ or }\\
	\ast=L &\text{and} \quad \star=R.
	\end{array}\right.\kern-\nulldelimiterspace
	\end{align}
	Then, $<$ induces a lexicographic order ``$<_{\ell}$'' in $(\Z\times\{L, R\})^\N$. In turn, we define a cyclic order induced by $<_{\ell}$ in the usual way: for $\s,\underline{\alpha},\underline{\tau} \in (\Z\times\{L, R\})^\N$,
	$$[\s,\underline{\alpha},\underline{\tau}]_{\ell} \quad \text{if and only if} \quad \s <_{_\ell}\underline{\alpha}<_{_\ell}\underline{\tau} \quad \text{ or } \quad \underline{\alpha}<_{_\ell} \underline{\tau} <_{_\ell} \s \quad \text{ or } \quad \underline{\tau} <_{_\ell} \s <_{_\ell} \underline{\alpha}.$$
	Moreover, given two different elements $\s,\underline{\tau} \in (\Z\times\{L, R\})^\N$, we define the \textit{open interval} from $\s$ to $\underline{\tau}$, denoted by $(\s,\underline{\tau})$, as the set of all points $x\in (\Z\times\{L, R\})^\N$ such that $[\s,x,\underline{\tau}]$. The collection of all such open intervals forms a base for the cyclic order topology. We then provide the space $\M$ with the topology $\tau_\M$ defined as the product topology of $[0, \infty)$ with the usual topology, and $(\Z\times\{L, R\})^\N$ with the just described cyclic order topology.
\end{discussion}

\begin{notations}
	If for some $k\geq0$, $\s=s_0s_1s_2\ldots \in (\Z\times\{L, R\})^\N$ is such that $s_j=s_k$ for all $j>k$, then we write $\s=s_0s_1\ldots \overline{s_k}$.
\end{notations}

\begin{observation}[Correspondence between topological spaces]\label{obs_corresp_orders_cosine} Let $g$ be any disjoint type cosine map, and let $\Addr(g)$ be the set of external addresses, see Definition \ref{def_addresses}. In particular, $\Addr(g)$ is endowed with a cyclic order topology. We note that there exists a one-to-one correspondence between $(\Z\times\{L, R\})^\N$ and $\Addr(g)$ that preserves their topologies, namely, the one that converts sequences as follows
$$(m,\star)(n,\ast)\ldots	\leftrightsquigarrow F_{(m,\star)} F_{(n,\ast)}\ldots.$$	
Since the curve $\delta$ chosen in \ref{dis_inversenormalized} is a vertical straight line, the linear order in fundamental domains chosen to define the cyclic order topology in $\Addr(g)$ agrees with the linear order \eqref{eq_order_LR} that determines the topology in $(\Z\times\{L, R\})^\N$, up to the specified correspondence. Hence, from now on we omit the specification of the correspondence, and $\s$ might denote either an element $(m,\star)(n,\ast)\ldots$ of $(\Z\times\{L, R\})^\N$, or its corresponding element $F_{(m,\star)} F_{(n,\ast)}\ldots$ in $\Addr(g)$.
\end{observation}

\begin{defn}[A topological model for cosine dynamics]\label{defn_modelcosine}
	Let $(\M, \tau_\M)$ be defined as in~\ref{dis_topology_model_cos}. Define $\mathcal{F} \colon (\M,\tau_\M) \to (\M,\tau_\M)$ as 
	$$\F(t,\s)\defeq(F(t) -2\pi |s_1| , \sigma(\s)), $$
	where $\sigma$ is the shift map on one-sided infinite sequences of $(\Z\times\{L, R\})^\N$, and $F(t)\defeq e^t-1$ is the standard map that codes exponential growth. Let $T\colon \M \rightarrow [0, \infty)$ be given by $T(t, \s)\defeq t $. We set
	\begin{align*}
	J(\F) &\defeq \{ x\in \M \colon \; T(\F^n(x)) \geq 0 \text{ for all $n\geq 0$}\}, \qquad\text{and} \\
	I(\F)&\defeq \{ x \in J(\F)\colon T(\F^n(x)) \to \infty \text{ as }n\to\infty \}. 
	\end{align*}
	We say that $\s \in (\Z\times\{L, R\})^\N$ is \textit{exponentially bounded} if $(t, \s) \in J(\F)$ for some $ t>0$, and denote by $\Exad$ the set of exponentially bounded elements. We moreover let
	\begin{align*}
	t_\s&\defeq \renewcommand{\arraystretch}{1.5}\left\{\begin{array}{@{}l@{\quad}l@{}} \min\{t\geq 0 : (t, \s) \in J(\F)\} & \text{if } \s \text{ is exponentially bounded}, \\
	\infty & \text{otherwise}. 
	\end{array}\right.\kern-\nulldelimiterspace
	\end{align*}
\end{defn}	

In other words, $J(\F)$ is the set of all points that stay in the space $\M$ under $\F^n$ for all $n\geq 0$.

\begin{remark} Compare to \cite[Appendix A]{helenaSemi}, where the construction of a similar model for the map $z \mapsto \pi\sinh(z)$ is sketched.
\end{remark}

\begin{observation}[Relation between cosine and exponential models]\label{obs_relmodels} Suppose that the set $\Z^\N$ is endowed with the lexicographical order topology, and define $\M_{\exp}\defeq[0, \infty) \times \Z^\N$ with the product topology. Moreover, define the map $\F_{\exp}\colon \M_{\exp}\rightarrow \M_{\exp}$ and the set $J(\F_{\exp})$ replacing in Definition~\ref{defn_modelcosine} the space $\M$ by $\M_{\exp}$. Then,  $(\F_{\exp}, J(\F_{\exp}))$ is the model for the dynamics of exponential maps described in \cite[Section 3]{lasseTopExp} and \cite[Definition~3.1]{nada}. We note that there does not exist an order preserving bijection between $\Z^\N$ with the usual lexicographic order and $((\Z\times\{L, R\})^\N, <_\ell)$, and hence the models are not the same. This was expected, since exponential maps have a single tract contained on a right half plane, while cosine maps have two tracts, as noted in \ref{dis_basic_cosine}. However, the spaces $\M$ and $\M_{\exp} \times \{L,R\}^\N$ with the product topology are homeomorphic via the map $h \colon \M_{\exp} \times \{L,R\}^\N \rightarrow \M$ given by $h(t,\s,\ul{\omega})\defeq(t,(s_0,w_0)(s_1,w_1)(s_2,w_2)\ldots)$, where $\s=s_0s_1\ldots \in \Z^N$ and $\ul{\omega}=w_0w_1w_2\ldots\in \{L,R\}^\N$. This can be seen by recalling that a base for the product topology  of $\M_{\exp} \times \{L,R\}^\N$ is given by cylinders, and the image of each such cylinder under $h$ can be expressed as a union of intervals of $\tau_\M$, and vice-versa, preimages of intervals are unions of cylinders. In particular, $J(\F)$ is homeomorphic to $J(\F_{\exp})\times\{L,R\}^\N$, where each subspace has the topology respectively induced from $\M$ and $\M_{\exp} \times \{L,R\}^\N$.
\end{observation}

We shall use the relation specified above between the exponential and cosine models to prove properties of the latter: 

\begin{prop}[Properties of the cosine model]\label{prop_properties_cosine} The space $J(\F)$ with the induced subspace topology admits the $1$-point compactification, and the resulting space $J(\F)\cup\{\tilde{\infty}\}$ is a sequential space. Moreover, $\F \vert_{J(\F)}$ is continuous.
\end{prop}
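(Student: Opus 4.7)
The plan is to exploit the product decomposition of Observation \ref{obs_relmodels}: the homeomorphism $h\colon \M_{\exp}\times\{L,R\}^\N \to \M$ identifies the cosine model with the product of the exponential model and the Cantor set $\{L,R\}^\N$. Since the analogous properties for the exponential model $(J(\F_{\exp}), \F_{\exp})$ are already established in \cite[Section 3]{lasseTopExp} (see also \cite[Proposition 3.2]{nada}, and compare \cite[Appendix A]{helenaSemi} for the $\pi\sinh$ case), each of the three assertions reduces to transferring the corresponding fact through $h$ together with elementary compactness properties of $\{L,R\}^\N$.

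For continuity, I would verify by direct computation that under $h$ the map $\F$ corresponds to the product $\F_{\exp}\times\sigma_{L,R}$, where $\sigma_{L,R}$ is the shift on $\{L,R\}^\N$: indeed,
\[
\F(h(t,\s,\ul{\omega})) = \bigl(F(t)-2\pi|s_1|,\,(s_1,w_1)(s_2,w_2)\ldots\bigr) = h\bigl(F(t)-2\pi|s_1|,\sigma(\s),\sigma_{L,R}(\ul{\omega})\bigr).
\]
The continuity of $\F_{\exp}$ on $J(\F_{\exp})$ comes from the cited exponential-model results, and the shift on the compact metric space $\{L,R\}^\N$ is classically continuous, so the product is continuous and hence so is $\F|_{J(\F)}$.

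For the one-point compactification, $J(\F_{\exp})$ is locally compact Hausdorff (from the same sources), and $\{L,R\}^\N$ is compact Hausdorff, so $J(\F_{\exp})\times\{L,R\}^\N$—and thus $J(\F)$ via $h$—is locally compact Hausdorff; the Alexandroff compactification then exists.

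The main obstacle is the sequentiality statement, since sequentiality is notoriously ill-behaved under products. The strategy is to identify $J(\F)\cup\{\tilde\infty\}$ with the quotient of the compact Hausdorff space $X\defeq (J(\F_{\exp})\cup\{\tilde\infty_{\exp}\})\times\{L,R\}^\N$ by the closed equivalence relation that collapses $\{\tilde\infty_{\exp}\}\times\{L,R\}^\N$ to a single point; this identification is forced by the definition of the Alexandroff compactification together with the compactness of $\{L,R\}^\N$, which ensures that the complements of compact subsets of $J(\F)$ are precisely the images under $h$ of sets of the form $U\times\{L,R\}^\N$ with $U$ a neighbourhood of $\tilde\infty_{\exp}$. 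Since $J(\F_{\exp})\cup\{\tilde\infty_{\exp}\}$ is sequential by the cited results and $\{L,R\}^\N$ is compact metrizable, $X$ is sequential (product of a sequential space with a compact metric space, via a standard argument extracting convergent subsequences in the second factor). Sequentiality then passes to the quotient $J(\F)\cup\{\tilde\infty\}$ by the general fact that quotients of sequential spaces are sequential, completing the proof.
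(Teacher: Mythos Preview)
Your argument is correct in outline and rests on the same product decomposition $J(\F)\cong J(\F_{\exp})\times\{L,R\}^\N$ that the paper uses, but the paper handles sequentiality and continuity differently, and in one place more cleanly.

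For the compactification and sequentiality, the paper simply observes that $J(\F_{\exp})$ embeds as a straight brush in $\R^2$ and $\{L,R\}^\N$ is a Cantor set, so the product is locally compact, Hausdorff and \emph{second-countable}; the one-point compactification of any such space is then automatically second-countable, hence first-countable, hence sequential. Your quotient route---identifying $J(\F)\cup\{\tilde\infty\}$ with the collapse of $\{\tilde\infty_{\exp}\}\times\{L,R\}^\N$ in $(J(\F_{\exp})\cup\{\tilde\infty_{\exp}\})\times\{L,R\}^\N$ and then pushing sequentiality through the quotient---also works, but the intermediate step ``sequential $\times$ compact metric is sequential'' is not the triviality you suggest. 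Sequentiality is notoriously non-productive, and the ``standard argument extracting convergent subsequences in the second factor'' does not close the gap on its own; one needs a genuine result (of Boehme/Tanaka type, using local countable compactness of the second factor) or, more simply, the observation that both factors here are in fact metrizable. The latter is precisely the paper's second-countability argument in disguise, so your detour buys no extra generality.

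For continuity, the paper does not use the product decomposition at all: it verifies continuity of $\F|_{J(\F)}$ directly, by pulling back a basic open set $((t_1,t_2)\times\mathcal{I})\cap J(\F)$ through the explicit formula for $\F$. Your factorisation $\F\circ h = h\circ(\F_{\exp}\times\sigma_{L,R})$ is correct and arguably more conceptual, since it imports continuity of $\F_{\exp}$ from the exponential literature rather than repeating the computation; the paper's version is self-contained and avoids any dependence on matching conventions with \cite{lasseTopExp,nada}.
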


\begin{proof}
	By Observation \ref{obs_relmodels}, $J(\F)$ is homeomorphic to $J(\F_{\exp})\times\{L,R\}^\N$. In turn, $J(\F_{\exp})$ is homeomorphic to a straight brush, which is a subset of $\R^2$ with the usual Euclidean metric, see \cite[Theorem 3.3]{nada}, and $\{L,R\}^\N$ is homeomorphic to the Cantor set. Hence, the product space $J(\F_{\exp})\times\{L,R\}^\N$ is also locally compact and Hausdorff, and so it admits the one-point compactification; see \cite[\S19 and \S29]{Munkres}. Then, as the resulting space is second, and so first, countable, it is a sequential space \cite[p. 12 Definition 9 and p.14  Proposition 7]{general_topology}.

In order to prove continuity of $\F\vert_{J(\F)}$, let us fix an arbitrary $(t,\s) \in J(\F)$ and let $V$ be an open neighbourhood of $\F(t, \s)$. Without loss of generality, we may assume that $V=((t_1, t_2)\times \mathcal{I}) \cap J(\F)$ for some open interval $\mathcal{I} \in (\Z\times \{L, R\})^\N$ and $t_1, t_2\in \R^+$ so that $t_1\leq T(\F(t,\s))< t_2$. Suppose that $\s=s_0s_1\ldots$ and denote $\tilde{\mathcal{I}}\defeq \{s_0\ultau : \ultau \in \mathcal{I}\}$. In particular, $\s \in \tilde{\mathcal{I}}$, and since by definition of $\F$, $t=\log(T(\F(t, \s))+1+2\pi\{s_1\})$ and the function $\log$ is increasing, 
	$$U\defeq(\log(t_1+1+2\pi \{s_1\} ), \log(t_2+1+2\pi \{s_1\}))\times \tilde{\mathcal{I}})\cap J(\F)$$
	is an open neighbourhood of $(t,\s)$ such that $\F(U)\subset V.$ 
\end{proof} 

In order to prove Theorem \ref{thm_conjcosine_intro}, our main task will be to construct for each disjoint type cosine map $g$, a continuous map $\Phi: J(\F) \rightarrow J(g)$ that conjugates the dynamics of $\F$ to those of $g\vert_{ J(g)}$. Then, the result for any cosine map $f$ will follow using Rempe's conjugacy between $f$ and $g$ ``near infinity''; \cite{lasseRigidity}. In the disjoint type case, the map $\Phi$ will send each point $(t,\ul{s})\in J(\F)$ to a point $z\in J(g)$ such that $z\in J_\s$ and $ \vert \Rea z\vert \approx~t$, see Observation \ref{obs_corresp_orders_cosine}. We will obtain the map $\Phi$ as the limit of a series of \textit{approximations} $\{ \Phi_n\}_{n\in \N}$; compare \cite{lasseTopExp,lasseRigidity,helenaSemi,mio_splitting} for similar arguments. The first approximation should be a projection from the space $J(\F)$ to the dynamical plane of~$g$.

\begin{defn}[Projection function] For each $A\geq 0$, we define a \textit{projection function} $\mathcal{C}_A: J(\F) \to \C$ as 
	\begin{align*}
	\mathcal{C}_A(t, \s)&\defeq \renewcommand{\arraystretch}{1.5}\left\{\begin{array}{@{}l@{\quad}l@{}} t+A + 2\pi \{s_0\}i & \text{if} \quad s_0=(n,R) \text{ for some }n\in \Z, \\
	-t-A+ 2\pi \{s_0\}i & \text{otherwise}, 
	\end{array}\right.\kern-\nulldelimiterspace
	\end{align*}
	where $\s=s_0s_1\ldots$, and if $s_0=(n, \ast)$, then $\{s_0\}=\{(n, \ast)\}=n$.
\end{defn}

\begin{observation}[The projection of $J(\F)$ lies in fundamental domains]\label{obs_CA}
Suppose that $g$ is a disjoint type cosine function for which fundamental domains have been defined following \ref{dis_inversenormalized}. If $A$ is the constant from Observation \ref{obs_constantA}, then $\mathcal{C}_A(J(\F))$ is totally contained in the union of fundamental domains. More specifically, for each $(t, \s)\in J(\F)$, if $\s=s_0s_1\ldots$, then $\mathcal{C}_A(t, \s) \subset F_{s_0}$; see also Observation \ref{obs_corresp_orders_cosine}.
\end{observation}

\begin{remark} The reason why instead of \textit{projecting} under $\mathcal{C}_A$ each point $(t, \s)\in J(\F)$ to a point of real part $\pm t$, but rather $\pm t \pm A$ for some constant $A$, is to ensure that for a fixed function $g$, the image of each $(t,\s)\in J(\F)$ under a projection map lies in a fundamental domain of $g$, on which, by Proposition \ref{prop_fromRS08}, $g$ \textit{expands} the Euclidean metric. Note that $\mathcal{C}_A(J(\F)) \nsubseteq J(g)$. Nonetheless, since $\Phi$ will be obtained as the limit of a composition of functions consisting of inverse branches of $g$ whose images lie in $\T_g$, by Observation \ref{rem_Jg}, its codomain will be $J(g)$.
\end{remark}

Recall that cosine maps behave like the exponential map for points with modulus large enough and sufficiently far from the imaginary axis. In particular, all such points are contained in fundamental domains. An essential characteristic of our model for cosine dynamics is that, as occurs for the exponential model, for each $(t,\s)\in J(\F)$, $\vert \mathcal{C}_A(\F(\ul{s},t))\vert$ is 
roughly the exponential of its real part. More precisely:
\begin{prop}[Model acts similar to the exponential]\label{sizeC} If $(t,\s) \in J(\F)$ and $A>0$, 
	\begin{equation}\label{eq_sizeC}
	\frac{F(t)+A}{\sqrt{2}} \leq \vert \mathcal{C}_A(\F(t,\s))\vert \leq F(t)+A.
	\end{equation}
\end{prop}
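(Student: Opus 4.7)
The plan is to unfold the definitions and reduce everything to an elementary inequality. Write $\s = s_0 s_1 s_2 \ldots$ and set $n \defeq \{s_1\}$. Then $\F(t,\s) = (F(t) - 2\pi|s_1|, \sigma(\s))$, and the first entry of $\sigma(\s)$ is $s_1$, so by the definition of $\mathcal{C}_A$:
\begin{itemize}
\item If $s_1 = (n, R)$, then $\mathcal{C}_A(\F(t,\s)) = (F(t) - 2\pi|n| + A) + 2\pi n \, i$;
\item If $s_1 = (n, L)$, then $\mathcal{C}_A(\F(t,\s)) = -(F(t) - 2\pi|n|) - A + 2\pi n \, i = -(F(t) - 2\pi|n| + A) + 2\pi n \, i$.
\end{itemize}
In both cases the modulus squared is
\[
|\mathcal{C}_A(\F(t,\s))|^2 = \bigl(F(t) - 2\pi|n| + A\bigr)^2 + (2\pi n)^2.
\]

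Next I would use the hypothesis $(t,\s) \in J(\F)$: the condition $T(\F(t,\s)) \geq 0$ forces $F(t) - 2\pi|s_1| \geq 0$, that is, $2\pi|n| \leq F(t)$. Introducing $x \defeq F(t) + A > 0$ and $y \defeq 2\pi|n| \geq 0$, the constraint becomes $y \leq x - A \leq x$, and the quantity to bound is $(x - y)^2 + y^2$.

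The rest is an elementary two-sided estimate. For the upper bound, expand
\[
(x-y)^2 + y^2 = x^2 - 2y(x - y),
\]
and since $0 \leq y \leq x$ the correction term $-2y(x-y)$ is nonpositive, giving $(x-y)^2 + y^2 \leq x^2 = (F(t)+A)^2$. For the lower bound, complete the square:
\[
(x-y)^2 + y^2 = 2\Bigl(y - \tfrac{x}{2}\Bigr)^2 + \tfrac{x^2}{2} \geq \tfrac{x^2}{2} = \tfrac{(F(t)+A)^2}{2}.
\]
Taking square roots yields the desired inequality \eqref{eq_sizeC}. There is no real obstacle; the only thing to be careful about is checking that the constraint coming from $J(\F)$ gives exactly the range $0 \leq y \leq x$ needed for the elementary bounds to apply, and that the two cases $s_1 = (n,R)$ and $s_1 = (n,L)$ collapse into the same modulus computation.
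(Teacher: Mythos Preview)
Your proof is correct and follows essentially the same approach as the paper's: unfold the definition of $\mathcal{C}_A(\F(t,\s))$, reduce to bounding $(x-y)^2+y^2$ for $0\le y\le x$, and verify the two elementary inequalities. Your treatment is in fact slightly cleaner, since you carefully distinguish $|s_1|=|n|$ from $\{s_1\}=n$ before observing that only $|n|$ enters the modulus, whereas the paper tacitly identifies the two.
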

\begin{proof} Suppose that $\s=s_0s_1\ldots $ and let $b\defeq 2\pi \{s_1\} $. Then,
	\begin{equation} \label{eq_squareroot}
	\begin{split}
	\vert \mathcal{C}_A(\F(t,\s))\vert &= \vert \pm(F(t)-b+A) + ib \vert=\sqrt{(F(t)+A-b)^{2}+ b^2} \\ &=\sqrt{(F(t)+A)^{2}-2(F(t)+A)b +2b^{2}}.
	\end{split}
	\end{equation}
	The second inequality in \eqref{eq_sizeC} follows from the assumption $T(\F(t,\s))\geq 0$, that is, $F(t)-b \geq ~0$, because by \eqref{eq_squareroot},
	$$\vert \mathcal{C}_A(\F(t,\s))\vert\leq \sqrt{(F(t)+A)^{2}} \iff -2(F(t)+A)b +2b^{2}\leq 0 \iff b \leq F(t)+A,$$
	where we have used that $A,b,F(t)\geq 0$. For the first inequality in \eqref{eq_sizeC}, we have
	\begin{equation*} 
	\begin{split}
	\sqrt{(F(t)+A)^{2}} \leq \sqrt{2} \vert \mathcal{C}_A(\F(t,\s))\vert & \iff (F(t)+A)^{2}-4(F(t)+A)b +4b^{2} \geq 0 \\ &\iff(F(t)+A-2b)^{2}\geq 0.\qedhere
	\end{split}
	\end{equation*}
\end{proof}

We describe the underlying idea in the construction of the map $\Phi$ that conjugates $\F$ to any disjoint type map $g\vert_{J(g)}$. For each $n\geq 0$, a function $\Phi_n\colon J(\F) \rightarrow \C$ will be defined the following way: we iterate each point $x=(t, \s)\in J(\F)$, with $\s=s_0s_1\ldots$, under the model function $\F$ a number $n$ of times. In particular, $\F^n(t, \s)=(t', \sigma^n(\s))$ for some $t'>0$. Next, we \textit{move} to the dynamical plane of $g$ using the function $\mathcal{C}_A$ for some constant $A$ big enough such that $(\mathcal{C}_A\circ\F^n)(t, \s)\in F_{s_n}$. Then, we use the composition of $n$ inverse branches of $g$ specified in \eqref{eq_inversecosine} to obtain a point in $F_{s_0}$, which will define $\Phi_n(x)$; see Figure \ref{fig:semi_cosine}. Finally, we use (Euclidean) expansion of $g$ on its tracts to show that $\{\Phi_n\}_{n\geq 0}$ is a uniformly convergent sequence. We now formalize these ideas:

\begin{defn}[Functions $\Phi_n$]\label{def_Phin} Let $g$ be a disjoint-type cosine map, and let $A$ be a constant provided by Observation \ref{obs_constantA}. Then, for each $n\geq 0$ we define the function
	$\Phi_{n}: J(\F)\rightarrow \C$ as
	$$\Phi_0(x)\defeq \mathcal{C}_A(x)\qquad \text{ and }\qquad \Phi_{n+1}(x)\defeq g^{-1}_{s_0}(\Phi_{n}(\F(x)), $$	
	for $x=(t,\s)$ and $\s=s_0s_1\ldots$.
\end{defn}

The function $\Phi_0$ is clearly well-defined. In order to see that for all $n\geq 1$ the function $\Phi_n$ is also well-defined, fix $x=(t, \s)\in J(\F)$ and suppose that $\s=s_0s_1\ldots$. Then, expanding definitions 
\begin{equation}\label{eq_expdef_cos}
\Phi_{n}(x)= \left(g^{-1}_{s_0}\circ g^{-1}_{s_1} \circ \cdots \circ g^{-1}_{s_{n-1}} \circ \mathcal{C}_A \circ \F^n\right)(x). 
\end{equation}
By Observations \ref{obs_CA} and \ref{rem_Jg}, the composition of the inverse branches $\{g^{-1}_{s_i}\}_{i< n}$ is well-defined on $\mathcal{C}_A(\F^n(x))\in F_{s_n}$. Moreover, by construction, for all $n\geq 0$,
\begin{equation} \label{commute_cos}
\Phi_{n}\circ \F= g \circ \Phi_{n+1}.
\end{equation}

\begin{figure}[h]
	\centering
\begingroup%
\makeatletter%
\providecommand\color[2][]{%
	\errmessage{(Inkscape) Color is used for the text in Inkscape, but the package 'color.sty' is not loaded}%
	\renewcommand\color[2][]{}%
}%
\providecommand\transparent[1]{%
	\errmessage{(Inkscape) Transparency is used (non-zero) for the text in Inkscape, but the package 'transparent.sty' is not loaded}%
	\renewcommand\transparent[1]{}%
}%
\providecommand\rotatebox[2]{#2}%
\newcommand*\fsize{\dimexpr\f@size pt\relax}%
\newcommand*\lineheight[1]{\fontsize{\fsize}{#1\fsize}\selectfont}%
\ifx\svgwidth\undefined%
\setlength{\unitlength}{396.8503937bp}%
\ifx\svgscale\undefined%
\relax%
\else%
\setlength{\unitlength}{\unitlength * \real{\svgscale}}%
\fi%
\else%
\setlength{\unitlength}{\svgwidth}%
\fi%
\global\let\svgwidth\undefined%
\global\let\svgscale\undefined%
\makeatother%
\begin{picture}(1,0.57142857)%
\lineheight{1}%
\setlength\tabcolsep{0pt}%
\put(0,0){\includegraphics[width=\unitlength,page=1]{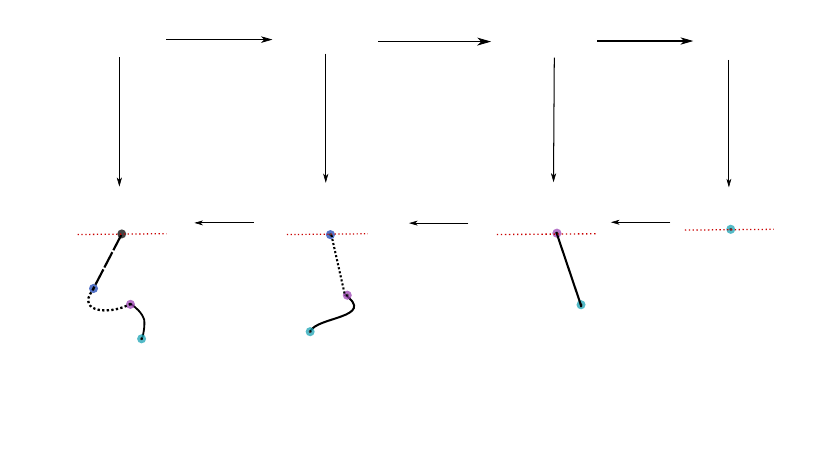}}%
\put(0.10665545,0.43757785){\color[rgb]{0,0,0}\makebox(0,0)[lt]{\lineheight{1.25}\smash{\begin{tabular}[t]{l}$\mathcal{C}_A$\\\end{tabular}}}}%
\put(0.13767613,0.51873606){\color[rgb]{0,0,0}\makebox(0,0)[lt]{\lineheight{1.25}\smash{\begin{tabular}[t]{l}$x$\end{tabular}}}}%
\put(0.264605,0.53325616){\color[rgb]{0,0,0}\makebox(0,0)[lt]{\lineheight{1.25}\smash{\begin{tabular}[t]{l}$\mathcal{F}$\end{tabular}}}}%
\put(0.3476481,0.51862544){\color[rgb]{0,0,0}\makebox(0,0)[lt]{\lineheight{1.25}\smash{\begin{tabular}[t]{l}$\mathcal{F}(x)$\end{tabular}}}}%
\put(0.4997473,0.53119302){\color[rgb]{0,0,0}\makebox(0,0)[lt]{\lineheight{1.25}\smash{\begin{tabular}[t]{l}$\mathcal{F}$\end{tabular}}}}%
\put(0.60570297,0.51856733){\color[rgb]{0,0,0}\makebox(0,0)[lt]{\lineheight{1.25}\smash{\begin{tabular}[t]{l}$\mathcal{F}^2(x)$\end{tabular}}}}%
\put(0.63125501,0.44135269){\color[rgb]{0,0,0}\makebox(0,0)[lt]{\lineheight{1.25}\smash{\begin{tabular}[t]{l}$\mathcal{C}_A$\\\end{tabular}}}}%
\put(0.10473663,0.30289369){\color[rgb]{0,0,0}\makebox(0,0)[lt]{\lineheight{1.25}\smash{\begin{tabular}[t]{l}$\Phi_0(x)$\\\end{tabular}}}}%
\put(0.01464967,0.21501405){\color[rgb]{0,0,0}\makebox(0,0)[lt]{\lineheight{1.25}\smash{\begin{tabular}[t]{l}$\Phi_1(x)$\\\end{tabular}}}}%
\put(0.17390663,0.19747265){\color[rgb]{0,0,0}\makebox(0,0)[lt]{\lineheight{1.25}\smash{\begin{tabular}[t]{l}$\Phi_2(x)$\\\end{tabular}}}}%
\put(0.15468234,0.13427909){\color[rgb]{0,0,0}\makebox(0,0)[lt]{\lineheight{1.25}\smash{\begin{tabular}[t]{l}$\Phi_3(x)$\\\end{tabular}}}}%
\put(0.33779994,0.3017161){\color[rgb]{0,0,0}\makebox(0,0)[lt]{\lineheight{1.25}\smash{\begin{tabular}[t]{l}$\Phi_0(\mathcal{F}(x))$\\\end{tabular}}}}%
\put(0.43926888,0.20232222){\color[rgb]{0,0,0}\makebox(0,0)[lt]{\lineheight{1.25}\smash{\begin{tabular}[t]{l}$\Phi_1(\mathcal{F}(x))$\\\end{tabular}}}}%
\put(0.37657855,0.14482809){\color[rgb]{0,0,0}\makebox(0,0)[lt]{\lineheight{1.25}\smash{\begin{tabular}[t]{l}$\Phi_2(\mathcal{F}(x))$\\\end{tabular}}}}%
\put(0.76495449,0.53612492){\color[rgb]{0,0,0}\makebox(0,0)[lt]{\lineheight{1.25}\smash{\begin{tabular}[t]{l}$\mathcal{F}$\end{tabular}}}}%
\put(0.84989134,0.51512522){\color[rgb]{0,0,0}\makebox(0,0)[lt]{\lineheight{1.25}\smash{\begin{tabular}[t]{l}$\mathcal{F}^3(x)$\end{tabular}}}}%
\put(0.35387366,0.43806512){\color[rgb]{0,0,0}\makebox(0,0)[lt]{\lineheight{1.25}\smash{\begin{tabular}[t]{l}$\mathcal{C}_A$\\\end{tabular}}}}%
\put(0.83402176,0.44268992){\color[rgb]{0,0,0}\makebox(0,0)[lt]{\lineheight{1.25}\smash{\begin{tabular}[t]{l}$\mathcal{C}_A$\\\end{tabular}}}}%
\put(0.60657365,0.30406562){\color[rgb]{0,0,0}\makebox(0,0)[lt]{\lineheight{1.25}\smash{\begin{tabular}[t]{l}$\Phi_0(\mathcal{F}^2(x))$\\\end{tabular}}}}%
\put(0.67791612,0.16450898){\color[rgb]{0,0,0}\makebox(0,0)[lt]{\lineheight{1.25}\smash{\begin{tabular}[t]{l}$\Phi_1(\mathcal{F}^2(x))$\\\end{tabular}}}}%
\put(0.82758784,0.30889696){\color[rgb]{0,0,0}\makebox(0,0)[lt]{\lineheight{1.25}\smash{\begin{tabular}[t]{l}$\Phi_0(\mathcal{F}^3(x))$\\\end{tabular}}}}%
\put(0.14974553,0.03177616){\color[rgb]{0.4,0.4,0.4}\makebox(0,0)[lt]{\lineheight{1.25}\smash{\begin{tabular}[t]{l}\fontsize{9pt}{1em}$F_{s_0}$\end{tabular}}}}%
\put(0.8005534,-1.66183908){\color[rgb]{0,0,0}\makebox(0,0)[lt]{\begin{minipage}{0.01730374\unitlength}\raggedright \end{minipage}}}%
\put(0.37874312,0.03035965){\color[rgb]{0.4,0.4,0.4}\makebox(0,0)[lt]{\lineheight{1.25}\smash{\begin{tabular}[t]{l}\fontsize{9pt}{1em}$F_{s_1}$\end{tabular}}}}%
\put(0.62498093,0.03042058){\color[rgb]{0.4,0.4,0.4}\makebox(0,0)[lt]{\lineheight{1.25}\smash{\begin{tabular}[t]{l}\fontsize{9pt}{1em}$F_{s_2}$\end{tabular}}}}%
\put(0.88428426,0.02664076){\color[rgb]{0.4,0.4,0.4}\makebox(0,0)[lt]{\lineheight{1.25}\smash{\begin{tabular}[t]{l}\fontsize{9pt}{1em}$F_{s_3}$\end{tabular}}}}%
\put(0.91934041,0.26712268){\color[rgb]{0.83137255,0,0}\makebox(0,0)[lt]{\lineheight{1.25}\smash{\begin{tabular}[t]{l}\fontsize{7pt}{1em}$2\pi i \{ s_3 \}$\end{tabular}}}}%
\put(0.71751317,0.26674366){\color[rgb]{0.83137255,0,0}\makebox(0,0)[lt]{\lineheight{1.25}\smash{\begin{tabular}[t]{l}\fontsize{7pt}{1em}$2\pi i \{ s_2 \}$\end{tabular}}}}%
\put(0.18357416,0.26982686){\color[rgb]{0.83137255,0,0}\makebox(0,0)[lt]{\lineheight{1.25}\smash{\begin{tabular}[t]{l}\fontsize{7pt}{1em}$2\pi i \{s_0 \}$\end{tabular}}}}%
\put(0.44594048,0.26898738){\color[rgb]{0.83137255,0,0}\makebox(0,0)[lt]{\lineheight{1.25}\smash{\begin{tabular}[t]{l}\fontsize{7pt}{1em}$2\pi i \{ s_1 \}$\end{tabular}}}}%
\put(0,0){\includegraphics[width=\unitlength,page=2]{Semiconjugacy_cosine.pdf}}%
\put(0.74904109,0.31880455){\color[rgb]{0,0,0}\makebox(0,0)[lt]{\lineheight{1.25}\smash{\begin{tabular}[t]{l}\fontsize{9pt}{1em} $g^{-1}_{s_2}$\end{tabular}}}}%
\put(0.50360887,0.32036866){\color[rgb]{0,0,0}\makebox(0,0)[lt]{\lineheight{1.25}\smash{\begin{tabular}[t]{l}\fontsize{9pt}{1em} $g^{-1}_{s_1}$\end{tabular}}}}%
\put(0.24276889,0.32250786){\color[rgb]{0,0,0}\makebox(0,0)[lt]{\lineheight{1.25}\smash{\begin{tabular}[t]{l}\fontsize{9pt}{1em} $g^{-1}_{s_0}$\end{tabular}}}}%
\end{picture}%
\endgroup%

	\caption{A schematic of the functions and curves involved in the definition of the functions $\{\Phi_n\}_{n\in \N}$.}
	\label{fig:semi_cosine}
\end{figure}

\begin{prop}[Continuity of the functions $\Phi_n$] \label{prop_cont_Phin} For each $n\geq 0$, $\Phi_n~\colon~J(\F) \rightarrow \C$ is continuous.
\end{prop}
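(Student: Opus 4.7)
My plan is to proceed by induction on $n$. For the base case $n=0$, continuity of $\Phi_0 = \mathcal{C}_A$ in the $t$-coordinate is immediate from its definition. For continuity in the $\s$-coordinate, note that $\mathcal{C}_A(t,\s)$ depends on $\s$ only through its first entry $s_0$; and given any $\s\in(\Z\times\{L,R\})^\N$, one can choose $\s^-<_\ell \s<_\ell \s^+$ that agree with $\s$ in the first position, so the open interval $(\s^-,\s^+)$ consists entirely of sequences beginning with $s_0$. Thus $\mathcal{C}_A$ is locally constant in the $\s$-coordinate near any point, and continuity of $\Phi_0$ follows.

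For the inductive step, assuming $\Phi_n$ is continuous, I would fix $x=(t,\s)\in J(\F)$ with $\s=s_0s_1\ldots$ and, exactly as in the base case, choose an open neighborhood $U$ of $x$ in $J(\F)$ such that every $x'=(t',\s')\in U$ has $\s' = s_0 s_1' s_2' \ldots$ beginning with the same first symbol $s_0$. On $U$ the defining recursion reads
\[\Phi_{n+1}(x') \;=\; g^{-1}_{s_0}\bigl(\Phi_n(\F(x'))\bigr),\]
where the inverse branch $g^{-1}_{s_0}$ no longer depends on $x'$. Continuity on $U$ is then a composition of three ingredients: $\F|_{J(\F)}$ is continuous by Proposition~\ref{prop_properties_cosine}; $\Phi_n$ is continuous by the inductive hypothesis; and $g^{-1}_{s_0}\colon g(\T_g)\setminus\delta\to F_{s_0}$ is a conformal isomorphism by Discussion~\ref{dis_inversenormalized}, in particular continuous on its domain. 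To legitimize the composition, formula \eqref{eq_expdef_cos} together with Observations~\ref{obs_CA} and~\ref{rem_Jg} guarantee that $\Phi_n(\F(x'))$ lands in $F_{s_1'}\subset \T_g\subset g(\T_g)\setminus\delta$, which lies in the domain of $g^{-1}_{s_0}$. Since continuity is a local property, $\Phi_{n+1}$ is continuous throughout $J(\F)$.

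The only real subtlety is that in the recursion for $\Phi_{n+1}$ the inverse branch $g^{-1}_{s_0}$ depends on the point $x$ through its address. I expect this to be resolved precisely by the fact that the first symbol of an external address is locally constant in the cyclic order topology on $(\Z\times\{L,R\})^\N$, which is exactly why this topology was designed for the model space.
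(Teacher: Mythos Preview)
Your proof is correct and follows essentially the same approach as the paper: both arguments hinge on the fact that the initial symbols of an address are locally constant in the cyclic order topology, so that locally $\Phi_n$ is a composition of fixed continuous maps. The only structural difference is that you proceed by induction on $n$, fixing just the first symbol $s_0$ and invoking the inductive hypothesis for $\Phi_n$, whereas the paper unfolds the recursion all at once, fixing the first $n+1$ symbols so that $\Phi_n$ coincides on a neighbourhood with the single fixed composition $L_n \defeq g_{s_0}^{-1}\circ\cdots\circ g_{s_{n-1}}^{-1}\circ \Phi_0\circ \F^n$; this is a cosmetic rather than a mathematical distinction.
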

\begin{proof} Let us fix an arbitrary $(t,\s)\in J(\F)$ with $\s=s_0s_1\ldots$, as well as some $\epsilon>0$. To see that $\Phi_0\equiv\mathcal{C}_A$ is continuous, let $\mathcal{I} \subset (\Z\times\{L, R\})^\N$ be any open interval containing $\s$ and such that if $\ultau=\tau_0\tau_1\ldots \in \mathcal{I}$, then $s_0=\tau_0$. Then, $U\defeq \left((t-\epsilon, t+\epsilon) \times \mathcal{I}\right) \cap J(\F)$ is an open neighbourhood of $(t, \s)$ such that
	$$\mathcal{C}_A(U)\subset (\pm t-\epsilon, \pm t+\epsilon) \pm A + 2\pi i \{s_0\} \subset \D_\epsilon(\pm t\pm A + 2\pi i \{s_0\})= \D_\epsilon(\mathcal{C}_A(t,\s)),$$	
	where $\pm$	equals ``$+$'' or ``$-$'' depending on whether $s_0=(n,R)$ or $s_0=(n,L)$ for some $n\in \Z$. Hence, we have shown continuity of $\Phi_0$. For each $n\geq 1$, let
	$$L_n\defeq g_{s_0}^{-1} \circ g_{s_1}^{-1} \circ \cdots \circ g^{-1}_{s_{n-1}}\circ \Phi_0 \circ \F^n$$
	and note that for any subset $U\subset J(\F)$ such that $\Phi_0(\F^n(U))\subset F_{s_n}$, by Proposition \ref{prop_properties_cosine} and the definition of the maps $\{g^{-1}_{s_i}\}_{i< n}$, $L_n\vert_{U}$ is a continuous function, as it is a composition of continuous functions. By \eqref{eq_expdef_cos}, $L_n(t,\s)=\Phi_n(t,\s)$. Hence, in order to prove continuity of $\Phi_n$ at $(t,\s)$, since by Observation \ref{obs_CA} $\Phi_0(\F^n(t,\s))\subset F_{s_n}$, it suffices to find a neighbourhood $V \ni (t,\s)$ such that $\Phi_{n}\vert_{V} \equiv L_{n}\vert_{V}$. Let $J_n \subset (\Z\times\{L, R\})^\N$ be any open interval containing $\s$ and such that if $\ultau=\tau_0\tau_1\ldots \in J_n$, then $s_i=\tau_i$ for all $0\leq i \leq n$, and choose $t_1,t_2\in \R^+$ so that $t_1\leq t \leq t_2$. Then, $V\defeq ((t_1, t_2)\times J_n) \cap J(\F)$ satisfies the properties required and continuity of $\Phi_n$ follows.
\end{proof}

We are now ready to  prove Theorem \ref{thm_conjcosine_intro}. We will do so by first showing that for any given disjoint-type cosine map $g$, the functions $\{\Phi_n\}_{n\in \N}$ converge to a continuous function $\Phi\colon J(\F)\to J(g)$ satisfying the properties listed on the second part of the statement. Then, for the first part, given any cosine map $f$, we will use \cite[Theorem 4.6]{mio_newCB} to relate its dynamics to those of a disjoint-type cosine map, and then apply the second part. We note that  \cite[Theorem 4.6]{mio_newCB} relies heavily on Rempe's analogs of Bottcher's map for transcendental maps, and more specifically on \cite[Theorem 3.2]{lasseRigidity}. 
\begin{proof}[Proof  of Theorem \ref{thm_conjcosine_intro}]
	We start by showing the second part of the statement. Let $g$ be a disjoint type cosine map and let $\T_g$ be a pair of expansion tracts for $g$. Let $\{\Phi_{n}\}_{n\geq 0}$ be the sequence of functions from Definition \ref{def_Phin}, and suppose that $g(z)=ae^z+ be^{-z}$ for some $a,b\in \C^\ast$. If $M\defeq \max\{\vert a \vert, \vert b \vert\}$, then by Propositions \ref{prop_fromRS08} and \ref{sizeC}, for each $x=(t, \s) \in J(\F)$ with $\s=s_0s_1s_2\ldots$,
	\begin{equation*}
	\vert \Rea( \Phi_1(x)) \vert =\vert \Rea( (g_{s_0}^{-1} \circ \mathcal{C}_A \circ\F)(x))\vert \leq \ln\vert \mathcal{C}_A(\F(x)) \vert +\vert \ln(M) \vert +1 \leq  t+A +\vert\ln(M) \vert+2. 
	\end{equation*}
	Similarly, $\vert \Rea( \Phi_1(x)) \vert \geq t-\ln(\sqrt{2})-\vert\ln(M) \vert-2$. By the definition of $\Phi_1$ and Observation ~\ref{obs_constantA}, both $\Phi_0(x)$ and $\Phi_1(x)$ lie in the same fundamental domain $F_{s_0}$. Thus, either both points have positive real part, of both have negative real part. By this and using that $\vert \Rea (\Phi_0(x))\vert = t+A$, we have
	\begin{equation}\label{eq_boundreal}
	\vert \Rea (\Phi_0(x)) - \Rea (\Phi_1(x))\vert\leq A+\ln(\sqrt{2}) +\vert \ln(M) \vert+2.
	\end{equation}
	Moreover, by \eqref{eq_boundreal} and Proposition \ref{prop_fromRS08},
	\begin{equation}\label{eq_mucosine}
	\vert \Phi_0(x) -\Phi_1(x)\vert \leq A +\ln(\sqrt{2}) + \vert \ln(M)\vert +2 + 3\pi \eqdef\mu,
	\end{equation}
	where we note that the constant $\mu$ does not depend on the point $x$. In particular, $\Phi_0(x)$ and $\Phi_1(x)$ lie in the same tract, and hence the straight segment joining these two points is totally contained in a connected component of $\{z: \vert \Rea z\vert >\mathcal{K}(g)\}$, which is a convex set. Moreover, by \eqref{eq_expdef_cos}, if $\mathsf{g}^{-1}_{\s,n}\defeq g^{-1}_{s_0}\circ g^{-1}_{s_1} \circ \cdots \circ g^{-1}_{s_{n-1}}$ for some $n\geq 1$, then $$\Phi_n(x)=(\mathsf{g}^{-1}_{\s,n}\circ \Phi_0 \circ \F^n)(x) \quad \text{ and } \quad \Phi_{n+1}(x)=(\mathsf{g}^{-1}_{\s,n}\circ\Phi_1 \circ\F^n)(x).$$
	Note that if $\gamma$ is the straight segment connecting $\Phi_0(\F^n(x))$ and $\Phi_1(\F^n(x))$, then since the map $\mathsf{g}^{-1}_{\s,n}$ is a bijection to its image as it is a composition of bijections, $\mathsf{g}^{-1}_{\s,n}
	(\gamma)$ is a curve with endpoints $\Phi_n(x)$ and $\Phi_{n+1}(x)$. Thus, using \eqref{eq_mucosine} and \eqref{eq_expansion},
	\begin{equation}\label{eq_contractioncos}
	\vert \Phi_{n+1}(x)-\Phi_n(x)\vert \leq \frac{\vert \Phi_0(\F^n(x)) -\Phi_1(\F^n(x))\vert} {2^n} \leq \frac {\mu} {2^n}.
	\end{equation}
	Hence, $\{\Phi_n\}_{n\geq 0}$ is a uniformly Cauchy sequence of continuous functions, and so they converge uniformly to a continuous limit
	function $\Phi :J(\F) \rightarrow \C$, that by \eqref{commute_cos} satisfies 
	\begin{equation}\label{eq_commute_cos}
	\Phi\circ \F=g \circ \Phi.
	\end{equation}
	Note that for each $x \in J(\F)$, $\Phi(x)$ is the limit of the backward orbit of a point in $\T_g$, see \eqref{eq_expdef_cos}. Hence, by Observation \ref{rem_Jg}, $\Phi(x)\in J(g)$ and thus, $\Phi(J(\F))\subset J(g)$. Moreover, since $\mathcal{C}_A \equiv \Phi_0$, for each $x\in J(\F)$, 
	\begin{equation}\label{eq_PhiminusC}
	\vert \Phi(x)-\mathcal{C}_A(x)\vert \leq \sum_{n=0} ^{\infty} \vert\Phi_{n+1}(x)-\Phi_n(x)\vert \leq\sum_{j=0} ^{\infty}\frac{\mu}{2^{n}} 
	=2\mu.
	\end{equation}
	This means for any sequence $\lbrace x_n \rbrace_{n\in \N} \subset J(\F)$ that $\Phi(x_n)\rightarrow\infty$ if and only if $\mathcal{C}_A(x_n)\rightarrow\infty$ as $n\rightarrow \infty$. By this, the definition of $I(\F)$ and Proposition \ref{sizeC},
	\begin{equation}\label{corrI}
	\begin{split}
	x \in I(\F)& \Leftrightarrow \lim_{n \to \infty} T(\F^n(x))=\infty \Leftrightarrow \lim_{n \to \infty}\vert \mathcal{C}_A(\F^n(x)) \vert
	=\infty \\ & \Leftrightarrow \lim_{n \to \infty} \Phi(\F^n(x))= \lim_{n \to \infty} g^{n}(\Phi(x))=\infty 
	\Leftrightarrow \Phi(x) \in I(g).
	\end{split}
	\end{equation}
	Equivalently, $\Phi(I(\F)) \subseteq I(g)$ and $\Phi(J(\F) \setminus I(\F)) \subseteq J(g)\setminus I(g)$. Consequently, surjectivity of $\Phi$ would imply $\Phi(I(\F)) = I(g).$
	
	\begin{Claim}
		The function $\Phi: J(\F) \rightarrow J(g)$ is surjective. 
	\end{Claim} 
	\begin{subproof}
		Fix an arbitrary $z\in J(g)$. Then, $z\in J_\s$ for some $\s=s_0s_1s_2\ldots \in \Addr(g)$, see Observation \ref{rem_Jg}. Note that by definition, the function $\F$ is injective on its first coordinate. That is, for each fixed $\s\in (\Z\times \{L,R\})^\N$, $\F_{\s}\defeq \F(\cdot, \s)\colon \R^+\rightarrow \C$ given by $t\mapsto\F(t, \s)$ is injective. Hence, we can consider the sequence of real positive numbers $\{t_k\}_{k\geq 0}$ uniquely determined by the equations
		\begin{equation*}
		\F^{k}(t_k,\s)=(\vert \Rea( g^{k}(z))\vert, \sigma^{k}(\s)).
		\end{equation*}
		In particular, $T(\F^{k}(t_k,\s))=\vert\Rea(g^{k}(z))\vert>0$, and hence one can see using a recursive argument that for all $0\leq j\leq k$, 
		\begin{equation}\label{eq_Tmorethan0}
		T(\F^{j}(t_k,\s))=\log \left( T(\F^{j+1}(t_k,\s))+2\pi \{ s_{j+1}\} +1 \right)>0,
		\end{equation}
		and so $\F^j(t_k, \s)$ is indeed well-defined for all $j\leq k$. By definition of the map $\mathcal{C}_A$, it holds that $\Rea( \mathcal{C}_A(\F^{k}(t_k,\s)))=\pm\Rea(g^{k}(z)) \pm A$, where ``$\pm$'' equals ``$+$'' or ``$-$'' depending on whether $s_k$ belongs to $\Z\times \{R\}$ or $\Z\times \{L\}$. Moreover, by Observation \ref{obs_CA}, both $g^{k}(z),\mathcal{C}_A(\F^{k}(t_k,\s)) \in F_{s_k}$, and hence by Proposition \ref{prop_fromRS08},
		\begin{equation}\label{eq_bound3piA}
		\vert \mathcal{C}_A(\F^{k}(t_k,\s)) -g^{k}(z) \vert< 3\pi + A.
		\end{equation}
		Note that for all $j\leq k$, since the second coordinate of $\F^{j}(t_k,\s)$ equals $\sigma^j(\s)$, we have that $\Phi_{k-j}(\F^{j}(t_k,\s))=(g^{-1}_{s_{j}}\circ \cdots \circ g^{-1}_{s_{k-1}}\circ \mathcal{C}_A \circ\F^{k})(t_k,\s)$ and $g^{j}(z)=(g^{-1}_{s_{j}}\circ \cdots \circ g^{-1}_{s_{k-1}}\circ g^k)(z)$. Hence, using \eqref{eq_expansion},  \eqref{eq_PhiminusC} and \eqref{eq_bound3piA}, by the same contraction argument as when showing \eqref{eq_contractioncos}, for any $j\leq k$,
		\begin{equation*} 
		\begin{split}
		\vert \mathcal{C}_A(\F^{j}(t_k,\s)) -g^{j}(z) \vert & \! \leq \! \vert \mathcal{C}_A(\F^{j}(t_k,\s)) \!-\!\Phi_{k-j}(\F^{j}(t_k,\s)) \vert\! + \!\vert \Phi_{k-j}(\F^{j}(t_k,\s))\!-\!g^j(z)\vert \\ &<2\mu + \frac{3\pi + A}{2^{k-j}}< 2(\mu +3\pi + A)\eqdef\eta.
		\end{split}
		\end{equation*}
		We note that the constant $\eta$ does not depend on $k$. In particular, by taking $j=0$ we see that $t_k$ is uniformly bounded from above by a constant independent of $k$, and thus $t_k\nrightarrow \infty$ as $k \rightarrow \infty$. This means that there exists at least one finite limit point for the sequence $\{t_k\}_{k\geq 0}$, say $t\geq 0$, that by \eqref{eq_Tmorethan0} satisfies $(t,\s)\in J(\F)$. Since by \eqref{eq_commute_cos}, for each $j\geq 0$, we have $g^j(\Phi(t,\s))=\Phi (\F^j(t,\s))$,
		\begin{equation*} 
		\begin{split}
		\vert g^j(\Phi(t,\s)) -g^{j}(z) \vert \! \leq \!\vert \Phi(\F^j(t,\s))\!- \!\mathcal{C}_A(\F^{j}(t,\s)) \vert \!+\! \vert \mathcal{C}_A(\F^{j}(t,\s))-g^j(z)\vert \!< \!2\mu + \eta,
		\end{split}
		\end{equation*}
		and this upper bound does not depend on $j$. Since $g^j(\Phi(t,\s))$ and $g^{j}(z)$ belong to the same fundamental domain $F_{s_j}$ for each $j\geq 0$, we can once more use the same contraction argument to conclude that the points $\Phi(t,\s)$ and $z$ are equal. 
	\end{subproof}
	
	To prove injectivity of $\Phi$, note that if $(t,\s), (t', \s) \in J(\F)$ for some $t\neq t'$, the orbits of $(t,\s)$ and $(t', \s)$ under $\F$ will eventually be far apart by definition of $\F$. Then, by \eqref{eq_commute_cos} and \eqref{eq_PhiminusC}, so will be the orbits under $g$ of $\Phi(t, \s)$ and $\Phi(t', \s)$, and injectivity follows.
	
	Since the compactification $J(\F)\cup \{\tilde{\infty}\}$ is by Proposition \ref{prop_properties_cosine} a sequential space, and so is $\widehat{\C}\defeq \C \cup \{\infty\} $, the notions of continuity and sequential continuity for functions between these spaces are equivalent. Thus, using \eqref{corrI}, we can extend $\Phi$ to a continuous map $\tilde{\Phi}: J(\F) \cup \lbrace \tilde{\infty} \rbrace \rightarrow J(g) \cup \lbrace \infty \rbrace$ by defining $\tilde{\Phi}(\tilde{\infty})\defeq\infty$. Then, $\tilde{\Phi}^{-1}$ is continuous as it is the inverse of a continuous bijective map on a compact space, and consequently, by respectively removing $\tilde{\infty}$ and $\infty$ from the domain and codomain of $\tilde{\Phi}^{-1}$, it follows that $\Phi^{-1}$ is also continuous.
	
	We have shown that for any cosine map $g$ of disjoint type, $\Phi \colon J(\F) \rightarrow J(g)$ is a homeomorphism and $\Phi(I(F))=I(g)$. 
	
	Suppose now that $f$ is any cosine map. If follows from \cite[Theorem 4.6]{mio_newCB} that there is a disjoint type cosine map $g$ and a continuous function $\theta\colon J(g)\to J(f)$ with the following properties:
	\begin{enumerate}
	\item  \label{enum1}The restriction $\theta\colon  \theta^{-1}(J_K(f)) \to J_K(f)$ is a homeomorphism for some $K>0$;
	\item \label{enum2} $\theta \circ g=f\circ \theta$ in $\theta^{-1}(J_K(f))$; 
	\item \label{enum3} $\theta(I(g))\subset I(f)$.
	\end{enumerate}
	
	More specifically, the map $g$ in \cite[Theorem 4.6]{mio_newCB} is of the form $\lambda f$  for some $\lambda \in \C\setminus \{0\}$, and so $g$ is a cosine map. Then, \cite[Theorem 4.6]{mio_newCB} states that $J(g)$ contains a so-called \textit{strongly absorbing Cantor bouquet} $X$,  \cite[Definition~1.5]{mio_newCB}, such that $\theta\vert_X$ is a homeomorphism to its image, \cite[Theorem 4.6(d)]{mio_newCB}, and so that $\theta(X)\supset J_K(f)$ for some $K>0$, \cite[line 18 in the proof of Theorem 4.6]{mio_newCB}. Thus, \eqref{enum1} follows. Item \eqref{enum2} is a consequence of this together with \cite[Theorem 4.6(b)]{mio_newCB}, and \eqref{enum3} is part of \cite[Theorem 4.6(g)]{mio_newCB}. Then, if $\tilde{\Phi}\colon J(\F) \rightarrow J(g)$  is the homeomorphism provided by the second part of the statement, the composition  $\Phi \defeq \theta\circ \tilde{\Phi} \colon J(\F)\to J(f)$ satisfies the desired properties.
\end{proof}

\begin{observation}\label{obs_model_bouquet}
Recall from Observation \ref{rem_Jg} that  if $g$ is of disjoint type, then $J(g)=\bigcup_{\s\in \Addr(g)} J_\s$. It follows from the proof of Theorem \ref{thm_conjcosine_intro} that in this case, for each $\s\in \Addr(g)$, $\Phi\colon \{(t, \s) \colon  t \geq t_\s\} \to J_\s$ is a bijection, see also Observation \ref{obs_corresp_orders_cosine}. This implies that $\Phi$ acts as an order-preserving map from $\Exad$ to $\Addr(g)$. Note also that $J(\F)$ is a straight brush, see the proof of Proposition~\ref{prop_properties_cosine}, and in particular a collection of disjoint unbounded curves. Hence, our result provides another proof of the fact that $J(g)$ is a Cantor bouquet.
\end{observation}

\section{A model for strongly postcritically separated cosine maps}\label{sec_model_sps}

We defined in the introduction what it means for a cosine map to be strongly postcritically separated. This notion, introduced in \cite{mio_orbifolds}, generalizes to some functions in class $\B$:
\begin{defn}[Strongly postcritically separated functions]\label{def_strongps}
	A function $f\in \B$ is \textit{strongly postcritically separated (sps)} if there are constants $c,\epsilon >0$ such that
	\begin{enumerate}[label=(\alph*)]
		\item \label{item_Fatou} $P_{F}\defeq P(f)\cap F(f)$ is compact;
		\item \label{itema_defsps} $f$ has bounded criticality on $J(f)$;
		\item \label{itemb_defsps} for each $z\in J(f)$, $\#(\Orb^+(z)\cap \Crit(f)) \leq c$;
		\item \label{itemd_defsps} for all distinct $z,w\in P_J\defeq P(f)\cap J(f)$, $\vert z-w\vert\geq \epsilon \max\{\vert z \vert, \vert w \vert\}$.
	\end{enumerate}
\end{defn}
For cosine maps, some conditions in the definition of strongly postcritically separated maps are trivially satisfied, and thus they can be characterized the following way:
\begin{prop}[Cosine maps that are strongly postcritically separated]\label{prop_sps_cosine} Let $f$ be in the cosine family. Then the following are equivalent:
	\begin{enumerate}[label=(\Alph*)]
		\item \label{item_equiv_strong1} $f$ is strongly postcritically separated;
		\item \label{item_equiv_strong2} $P_F$ is compact and there exists $\epsilon>0$ such that for all distinct $z,w\in P_J$, 
		\begin{equation}\label{eq_epsilon} 
		\vert z-w\vert\geq \epsilon \max\{\vert z \vert, \vert w \vert\}.
		\end{equation}
		\item \label{item_equiv_strong3} Each critical value of $f$ converges to an attracting cycle or a repelling periodic cycle, or its orbit tends to infinity in such a way that \eqref{eq_epsilon} holds. 
	\end{enumerate}
\end{prop}
\begin{proof}
	By definition, \ref{item_equiv_strong1} $\Rightarrow$ \ref{item_equiv_strong2}. For any cosine map $f$, $\AV(f)=\emptyset$ and any critical point has local degree equal to $2$, see \ref{dis_basic_cosine}. In particular, $f$ has bounded criticality on its Julia set. Moreover, since each cosine map has two critical values, for all $z\in J(f)$, $\#(\Orb^+(z)\cap \Crit(f)) \leq 2$. Hence, if $f$ is in the cosine family and \ref{item_equiv_strong2} holds for $f$, then all conditions in the definition of strongly postcritically separated maps (Definition \ref{def_strongps}) are satisfied, and so \ref{item_equiv_strong2} $\Rightarrow$ \ref{item_equiv_strong1}. If \eqref{eq_epsilon} holds for $f$, then $P(f)\cap J(f)$ is discrete. In addition, since $f\in \B$, when $P_J$ is discrete, $P_F$ being compact is equivalent to all periodic cycles in $J(f)$ being repelling and $F(f)$ being a collection of attracting basins (see the proof of \cite[Lemma 2.6]{mio_orbifolds}), and thus, \ref{item_equiv_strong2} $\Leftrightarrow$ \ref{item_equiv_strong3}.
\end{proof}

\begin{remark}
The definition that we gave in the introduction of sps cosine maps (Definition~\ref{def_sps_intro}) agrees with Proposition \ref{prop_sps_cosine}\ref{item_equiv_strong2}. 
\end{remark}

In this section we prove our main results on sps cosine maps; namely, Theorems \ref{thm_landing} and~\ref{thm_strong_cosine_intro}. We start by providing a formal definition of dynamic rays:
\begin{defn}[Dynamic rays for transcendental maps {\cite[Definition 2.2]{RRRS}}]\label{def_ray}
	Let $f$ be a transcendental entire function. A \emph{ray tail} of $f$ is an injective curve $\gamma :[t_0,\infty)\rightarrow I(f)$, with $t_0>0$, such that
	\begin{itemize}
		\item for each $n\geq 1$, $t \mapsto f^{n}(\gamma(t))$ is injective with $\lim_{t \rightarrow \infty} f^{n}(\gamma(t))=\infty$. 
		\item $f^{n}(\gamma(t))\rightarrow \infty$ uniformly 
		in $t$ as $n\rightarrow \infty$.
	\end{itemize}
	A \emph{dynamic ray} of $f$ is a maximal injective curve $\gamma :(0,\infty)\rightarrow I(f)$ such that the restriction $\gamma_{|[t,\infty)}$ is a ray tail for all $t > 0$. We say that $\gamma$ \emph{lands} at $z$ if $\lim_{t \rightarrow 0^+} \gamma(t)=z$, and we call $z$ the \emph{endpoint} of $\gamma$. We denote the set of endpoints of dynamic rays of $f$ by $E(f)$.
\end{defn}

Recall from the introduction that whenever a ray tail contains a critical value, its preimages can be interpreted as several ray tails that \textit{split} or \textit{break} at critical points, and that may be extended to overlap pairwise. In \cite{mio_signed_addr}, combinatorics for maps exhibiting this phenomenon are developed, where the extensions of ray tails at critical points, as described for the map $z \mapsto \cosh(z)$, are formalized.

More precisely, let $f$ be a cosine map, let  $\Addr(f)$ be the set of external addresses as specified in Definition \ref{def_addresses} and denote
\begin{equation}\label{eq_signedadd}
\Addr(f)_\pm\defeq  \Addr(f)\times \{-,+\}.
\end{equation}
We call each of its elements $(\s, \ast) \in \Addr(f)_\pm$ a \emph{signed (external) address}. 

It is shown in \cite[\S 3]{mio_signed_addr} that $\Addr(f)_\pm$ can be endowed with a topology such that each $z\in I(f)$ has at least two signed addresses that depend continuously on $z$. This leads to a folliation of $I(f)$ as a collection of rays, that we call \textit{canonical}, indexed by signed external addresses. More precisely, we shall use the following results from \cite{mio_signed_addr}.

\begin{dfn&thm}[Canonical rays]\label{defn_thm_canonical} \normalfont Let $f$ be a sps cosine map. Then, for each $(\s, \ast) \in \Addr(f)_\pm$  there is a curve $\Gamma(\s, \ast)$, that is either a ray tail or a dynamic ray possibly with its endpoint, such that
	\begin{equation}\label{eq_If_in Gamma}
I(f)\subset \bigcup_{(\s,\ast)\in \Addr(f)_\pm} \Gamma(\s, \ast).
	\end{equation}
We say that $\Gamma(\s, \ast)$ is a \textit{canonical ray}, and we call any ray tail contained in a canonical ray a \textit{canonical tail}. Landing of all canonical rays implies landing of all dynamic rays in $J(f)$. Each $z\in I(f)$ belongs to exactly 
\begin{equation}\label{eq_addrz}
\Addr(z)_\pm \defeq 2\prod^{\infty}_{j=0}\deg(f,f^{j}(z))
\end{equation}
different canonical rays, for $\Addr(z)_\pm \in \{2,4,8\}.$
\end{dfn&thm}

\begin{proof}
The definition and existence of canonical rays, as well as \eqref{eq_If_in Gamma}, are a consequence of \cite[Definition 3.5 and Theorem 3.8]{mio_signed_addr}, while landing of canonical rays implying landing of all dynamic rays is \cite[Observation 3.13]{mio_signed_addr}. Equation \eqref{eq_addrz} follows from \cite[Definition 3.10 and Observation 3.11]{mio_signed_addr}, and $\Addr(z)_\pm \in \{2,4,8\}$ is a consequence of cosine maps having only $2$ critical values, with all critical points being of local degree $2$.
\end{proof}

\begin{remark}It follows from the previous result that in order to study dynamic rays for the functions we consider, it suffices to study canonical rays.
\end{remark}

Next, we define our model for sps cosine maps. We start by setting the topology we shall use in the ambient space $\M\times \{-,+\}$ that contains $J(\F)\times \{-,+\}$:
\begin{discussion}[Definition of topologies]\label{dis_topology_spsmodel} Consider the set
	\begin{equation}\label{eq_Mpm}
	\M_\pm \defeq \M\times \{-,+\}= [0,\infty)\times(\Z\times\{L, R\})^\N \times \{-,+\}, 
	\end{equation}
	and let  $<_{\ell}$ denote the lexicographic order in $(\Z\times\{L, R\})^\N$ defined in \ref{dis_topology_model_cos}. Let us give the set $\lbrace -,+\rbrace$ the order $\lbrace -\rbrace \prec\lbrace +\rbrace$, and define the linear order in $(\Z\times\{L, R\})^\N \times \{-,+\}$
\begin{equation} \label{eq_linear_addr}
	(\ul{s}, \ast )<_{_A} (\ul{\tau}, \star) \qquad \text{ if and only if } \qquad \ul{s} <_{_\ell} \ul{\tau} \quad \text{ or } \quad \ul{s} =_{_\ell} \ul{\tau}\: \text{ and } \: \ast \prec \star,
\end{equation}
where the symbols ``$\ast, \star$'' denote generic elements of $\lbrace-, +\rbrace.$ 
This linear order gives rise to a cyclic order: for $a,x,b \in (\Z\times\{L, R\})^\N \times \{-,+\}$,
\begin{equation*} \label{eq_orderAfpm}
	[a,x,b]_{_A} \quad \text{if and only if} \quad a<_{_A} x<_{_A} b \quad \text{ or } \quad x <_{_A}b <_{_A} a \quad \text{ or }\quad b <_{_A} a <_{_A} x.
\end{equation*}
This cyclic order allows us to provide the set $(\Z\times\{L, R\})^\N \times \{-,+\}$ with the corresponding cyclic order topology $\tau_I$.

We define the topological space $(\M_\pm, \tau_{\M_\pm})$, with $\tau_{\M_\pm}$ being the product topology of $[0,\infty)$ with the usual topology, and $(\Z\times\{L, R\})^\N \times \{-,+\}$ with the topology $\tau_I$.
\end{discussion}

\begin{defn}[Model for sps cosine functions]\label{defn_modelspace_cos}
Let $J(\F)_\pm\defeq J(\F)\times \{-,+\}$ be the subspace of $\M_\pm$ with the induced topology $\tau_{\pm}$ from $\tau_{\M_\pm}$. We call $(J(\F)_\pm, \tau_{\pm})$ the \textit{model space} for strongly postcritically separated cosine maps. Define $I(\F)_{\pm}\defeq I(\F)\times \lbrace -,+\rbrace \subset J(\F)_{\pm}$ as a subspace equipped with the induced topology. We define the \textit{model function} as $\widetilde{\mathcal{F}}: J(\F)_\pm \to J(\F)_\pm$, given by $\widetilde{\F}(t,\s, \ast) \defeq(\F(t,\s), \ast)$. 
\end{defn}	



We are now ready to prove the main result of this paper, which is a more detailed version of Theorem \ref{thm_strong_cosine_intro}. Using the general framework developed in \cite{mio_splitting}, its proof will be concise. We remark that, alternatively, we could had provided a more direct but significantly more lengthy and technical proof. More precisely, one could avoid using the results from \cite{mio_splitting} and construct directly the semiconjugacy from $\F$ to $f\vert _{J(f)}$ in a similar manner to the proof of \cite[Theorem 6.5]{mio_splitting}, using that sps maps expand an orbifold metric in a neighbourhood of their Julia sets; \cite[Theorem 1.1]{mio_orbifolds}. We have opted for the first approach for clarity of exposition and to show an application of the framework that \cite{mio_splitting} provides.

Recall from Definition \ref{defn_modelcosine} that $\Exad$ denotes the set of exponentially bounded elements associated to $J(\F)$. We define $\Exadpm \defeq\Exad\times \{-,+\}$.
\begin{thm}\label{thm_strong_cosine} Let $f$ be a strongly postcritically separated cosine map and let $J(\F)_\pm$ and $\widetilde{\F}$ be the model space and function from Definition \ref{defn_modelspace_cos}. Then, $\widetilde{\F}$ is continuous, and there exists a continuous surjective function
\begin{equation*}
	\hat{\phi} \colon J(\F)_{\pm} \rightarrow J(f) \qquad \text{ so that } \qquad f\circ\hat{\phi} = \hat{\phi}\circ \widetilde{\F},
\end{equation*}
$\hat{\phi}(I(\F)_{\pm})=I(f)$ and such that for every $z\in I(f)$, $\# \hat{\phi}^{-1}(z)\in \{2,4,8\}$. Moreover, for each $(\s, \ast) \in \Exadpm$ the restriction map $\hat{\phi} \colon \{(t, \s, \ast) \colon t \geq t_\s\} \rightarrow \overline{\Gamma(\s, \ast)}$ is a bijection, and so $\overline{\Gamma(\s, \ast)}$ is a canonical ray together with its endpoint.
\end{thm}
\begin{remark} We have implicitly stated in Theorem \ref{thm_strong_cosine} that the map $\hat{\phi}$ establishes a one-to-one correspondence between $\Exadpm$ and $\Addr(f)_\pm$, since, with some abuse of notation, we have stated that for each $(\s, \ast) \in \Exadpm$, $\{(t, \s, \ast) \colon t \geq t_\s\}$ is mapped bijectively to $\overline{\Gamma(\s, \ast)} \subset J(f)$ for $(\s, \ast) \in \Addr(f)_\pm$. In particular, we are claiming that $\hat{\phi}$ is an order-preserving continuous map. 
\end{remark}	

\begin{proof}[Proof of Theorem \ref{thm_strong_cosine}] Let $g\defeq \lambda f$ be of disjoint type for some $\lambda\in \C^\ast$ and let $\Phi\colon J(\F)\to J(g)$ be the homeomorphism from Theorem \ref{thm_conjcosine_intro}. Define $J(g)_\pm\defeq J(g)\times \{-,+\}.$ In \cite[5.4, p. 23]{mio_splitting}, $J(g)_\pm$ is induced with a topology $\tau_J$ in an analogous manner as we did in \ref{dis_topology_spsmodel} for $J(\F)_\pm$, with $(\Z\times\{L, R\})^\N$ replaced by $\R\setminus \Q$ in \cite[equation (5.1)]{mio_splitting}. In particular, 
	\begin{equation} \label{eq_def_hatPhi}
	\hat{\Phi}\colon J(\F)_\pm\rightarrow J(g)_\pm; \quad \quad  (t,\s,\ast)\mapsto (\Phi(t, \s), \ast)
	\end{equation}
is a homeomorphism. We let $I(g)_\pm\defeq I(g)\times \{-,+\}$ and for each $\s\in \Addr(g)$, denote $J_{(\s, \ast)}\defeq J_\s\times \{\ast\}$. Let $\tilde{g}\colon J(g)_\pm\rightarrow J(g)_\pm$ be given by $\tilde{g}(z, \ast)\defeq(g(z), \ast).$  Note that by definition of the functions involved and Theorem \ref{thm_conjcosine_intro}, we have that
\begin{equation}\label{eq_commute_Ftilde}
\hat{\Phi}\circ \widetilde{\F}=\tilde{g} \circ \hat{\Phi}, 
\end{equation}
and, in particular, $\F$ is continuous as it can be expressed as a composition of continuous functions.

Then,  by \cite[Theorem 6.5]{mio_splitting}, there exists a continuous, surjective map $\phi: J(g)_\pm \to J(f)$ such that 
\begin{equation}\label{eq_comm_phi}
 f\circ\phi = \phi\circ \tilde{g}, \quad \quad  \phi(I(g)_{ \pm})=I(f)
\end{equation}
and for each $(\s, \ast)\in \Addr(g)_\pm$, 
\begin{equation}\label{eq_bijection}
\phi \colon J_{(\s, \ast)} \rightarrow \overline{\Gamma(\s, \ast)} \quad \text{is a bijection}, 
\end{equation}
where $\Gamma(\s, \ast)$ is the canonical ray at signed address $(\s, \ast)\in \Addr(f)_\pm$. It then follows from Definition and Theorem \ref{defn_thm_canonical} that for each $z\in I(f)$,
\begin{equation}\label{eq_cardinalAddr}
	\#\phi^{-1}(z)=2\prod^{\infty}_{j=0}\deg(f,f^{j}(z))\in \{2,4,8\}.
\end{equation}
	
Let $\hat{\phi}\defeq\phi\circ \hat{\Phi}: J(\F)_\pm \rightarrow J(f)$, which is continuous as it is a composition of continuous functions. Then, by  \eqref{eq_comm_phi} and \eqref{eq_commute_Ftilde},
$$f \circ \hat{\phi}=f \circ \phi \circ \hat{\Phi} =\phi\circ \tilde{g} \circ \hat{\Phi}= \phi\circ \hat{\Phi}\circ \widetilde{\F}= \hat{\phi} \circ \widetilde{\F},$$
as shown in the diagram:
\[\begin{tikzcd}
{\color{blue!80!black}J(\F)_\pm} \arrow[r,blue!80!black, "\widetilde{\F}"] \arrow[d,"\hat{\Phi}"]\arrow["\hat{\phi}"', dd, blue!80!black, bend right =55] & {\color{blue!80!black}J(\F)_\pm} \arrow[d, "\hat{\Phi}"] \arrow[dd, blue!80!black, bend left = 55, "\hat{\phi}"]\\
J(g)_{\pm} \arrow[r, "\tilde{g}"] \arrow[d, "\phi"]& J(g)_\pm \arrow[d, "\phi"] \\
{\color{blue!80!black}J(f)} \arrow[r, blue!80!black, "f"] & {\color{blue!80!black}J(f)}.	\end{tikzcd}
\]
Moreover, since, by Theorem \ref{thm_conjcosine_intro}, $\Phi(I(\F))=I(g)$, we have that $\hat{\Phi}(I(\F)_\pm)=I(g)_\pm$, and consequently $\hat{\phi}(I(\F)_\pm)=I(f)$. 
	 
Since $\hat{\Phi}$ is a homeomorphism, using  \eqref{eq_cardinalAddr} we have that $\#\hat{\phi}^{-1}(z)=\#\phi^{-1}(z)\in \{2,4,8\}$. By Observation~\ref{obs_model_bouquet}, $\Phi\colon \{(t, \s) \colon  t \geq t_\s\} \to J_\s$ is a bijection for each $\s\in \Exad$, with $J_\s$ being a landing dynamic ray. Hence, by \eqref{eq_def_hatPhi} and \eqref{eq_bijection}, $\hat{\phi} \colon \{(t, \s, \ast) \colon t \geq t_\s\} \rightarrow \overline{\Gamma(\s, \ast)}$ is also bijection and  $\overline{\Gamma(\s, \ast)}$ is a canonical ray together with its endpoint. 
\end{proof}


\begin{proof}[Proof of Theorem \ref{thm_landing}] Let $f$ be a sps cosine map. Then, by Definition and Theorem \ref{defn_thm_canonical}, proving that all canonical rays of $f$ land suffices to conclude that all its dynamic rays land. Since, by Theorem \ref{thm_strong_cosine}, for each $(\s, \ast) \in \Addr(f)_\pm$, $\overline{\Gamma(\s, \ast)}$ is a canonical ray together with its landing point, the result follows.  Moreover, by Theorem \ref{thm_strong_cosine}, 
$$J(f)=\hat{\phi}(J(\F)_\pm)= \hat{\phi}\left(\bigcup_{(\s, \ast) \in \Exadpm}\{(t, \s, \ast) \colon t \geq t_\s\}\right)= \bigcup_{(\s, \ast) \in \Addr(f)_\pm} \overline{\Gamma(\s,\ast)},$$
and so every point in $J(f)$ is either on a dynamic ray or it is the landing point of at least one such ray.
\end{proof}	
\section{Combinatorics and landing of rays for $f(z)=\cosh(z)$}\label{sec_cosh_indent}

In the previous section we provided a semiconjugacy between the restriction of any strongly postcritically separated cosine map $f$ to its Julia set, and a simple dynamical system formed by a map $\widetilde{\F}$ and a model space $J(\F)_\pm$, that reflects the splitting of dynamic rays at critical points. In particular, Theorem \ref{thm_strong_cosine} implies that $J(f)$ can be described as a collection of canonical rays that land, and overlap pairwise.

Given the explicit nature of cosine maps, we claim that it is possible to improve this result by providing a conjugacy rather than a semiconjugacy between a model dynamical system and our maps.  Note that if a cosine map $f$ is sps, we let $\hat{\phi} \colon J(\F)_{\pm} \rightarrow J(f)$ be the map from Theorem \ref{thm_strong_cosine}, and we define the equivalence relation in $J(\F)_\pm$
\begin{equation}\label{eq_sim}
a \sim b \iff \hat{\phi}(a)=\hat{\phi}(b),
\end{equation}
then, since $\hat{\phi}$ is continuous, by the Universal Property of Quotient Maps (see for example \cite[Theorem 2.22]{Munkres}), there exists a unique continuous function $\tilde{\phi}\colon J(\F)_\pm /{\sim} \rightarrow J(f)$ such that the diagram 
\[\begin{tikzcd}
J(\F)_\pm \arrow[r, "\hat{\phi}"] \arrow[d, "\pi" ] & J(f) \\
\faktor{J(\F)_\pm}{\sim} \arrow[ru, dashed, "\exists! \:\tilde{\phi}"']
\end{tikzcd}
\]
commutes, where $\pi$ is the projection function that takes each element to its equivalence class. In particular, since both $\hat{\phi}$ and $\pi$ are surjective, $\tilde{\phi}$ is by definition bijective. By the commutative relation $f\circ \hat{\phi}= \hat{\phi} \circ \widetilde{\F}$ from Theorem \ref{thm_strong_cosine}, for any $a,b\in J(\F)_\pm$, $$\pi(a)=\pi(b) \Rightarrow \hat{\phi}(\widetilde{\F}(a)) =f(\hat{\phi}(a))=f(\hat{\phi}(b))=\hat{\phi}(\widetilde{\F}(b))\Rightarrow \pi(\widetilde{\F}(a))=\pi(\widetilde{\F}(b)),$$
and so, the map $h\colon J(\F)_\pm /{\sim} \rightarrow J(\F)_\pm /{\sim}$ given by $h(\pi(x))\defeq \pi(\widetilde{\F}(x))$ is a well-defined homeomorphism. In particular, $\tilde{\phi}$ conjugates $h$ and $f$ as shown in the following diagram:
\[
\begin{tikzcd}
J(\F)_\pm \arrow[r, "\widetilde{\F}"] \arrow[d,"\pi"] \arrow["\hat{\phi}"' ,dd, bend right =55] & J(\F)_\pm\arrow[d, "\pi"] \arrow[dd, bend left = 55, "\hat{\phi}"]\\
{\color{blue!80!black} \faktor{J(\F)_\pm}{\sim}} \arrow[r, blue!80!black, "h"] \arrow[d, blue!80!black, "\tilde{\phi}"]&{\color{blue!80!black} \faktor{J(\F)_\pm}{\sim}}\arrow[d, blue!80!black, "\tilde{\phi}"] \\
{\color{blue!80!black}J(f)} \arrow[r, blue!80!black, "f"] & {\color{blue!80!black}J(f)}.
\end{tikzcd}
\]

In this section, we describe ``$\sim$'' explicitly for the maps $z\mapsto \cosh(z)$ and $z\mapsto \cosh^2(z).$ The following observation is important to us.

\begin{observation}\label{obs_equiv_overlapping} Recall from Theorem~\ref{thm_strong_cosine} that $\hat{\phi} (\{(t, \s, \ast) \colon t \geq t_\s\})= \overline{\Gamma(\s, \ast)}$ for each $(\s, \ast)\in \Addr(f)_\pm$. Hence, in order to describe $J(\F)_\pm /{\sim}$, we can equivalently determine the overlap occuring between the canonical rays  $\{\overline{\Gamma(\s, \ast)}\}_{(\s, \ast)\in \Addr(f)_\pm}$ with their endpoints.
\end{observation}	

We will divide our task into two: on one hand, we shall study whether canonical rays share their endpoints, that is, whether some of them land together. On the other hand, one should provide information on the overlaps occuring between canonical rays. For the second, we will use the following result, which is  \cite[Proposition 3.9]{mio_signed_addr}. We recall that $\sigma \colon \Addr(f)\rightarrow \Addr(f) $ denotes the one-sided shift map on addresses.
\begin{prop}[Overlapping of canonical rays] \label{prop_overlapping_Gamma} For each $(\s, \ast) \in \Addr(f)_\pm$, either $\Gamma(\s, -)=\Gamma(\s, +)$ when $\Orb^{-}(\Crit(f)) \cap \Gamma(\s, \ast) =\emptyset$, or $\Gamma(\s, \ast)$ can be expressed as a concatenation 
	\begin{equation}\label{def_concat2}
	\Gamma(\s, \ast)=\cdots \bm{\cdot} \{c_{i+1}\}\bm{\cdot} \gamma^{i+1}_{i} \bm{\cdot} \{c_i\} \bm{\cdot} \cdots \bm{\cdot}\gamma^{1}_{0} \bm{\cdot} \{c_0\} \bm{\cdot} \gamma^{\infty}_{c_0},
	\end{equation}
	where $\{c_i\}_{i\in I}=\Orb^{-}(\Crit(f)) \cap \Gamma(\s, \ast)$, for each $i\geq 1$, if it exists, the curve $\gamma^{i+1}_{i}$ is a (bounded) piece of dynamic ray, and $\gamma^{\infty}_{c_0}$ is a piece of dynamic ray joining $c_0$ to infinity. In particular, in the latter case, the following properties hold for $\Gamma(\s, \ast)$:
	\begin{enumerate}[label=(\Alph*)]
		\item \label{itemA_Gamma} $\gamma^{\infty}_{c_0} \cup \{c_0\}= \Gamma(\s,- ) \cap \Gamma(\s, +)$ and $\gamma^{\infty}_{c_0}$ does not belong to any other canonical ray.
		\item \label{itemB_Gamma} For each $i\geq 0$, the point $c_i$ belongs to exactly $2 \prod^{\infty}_{j=0}\deg(f,f^{j}(c_i))$ canonical rays.
		\item \label{itemC_Gamma} For each $i\geq 0$, $\gamma^{i+1}_i= \Gamma(\s, \ast)\cap \Gamma(\ultau, \star)$, where $\star\neq \ast$ and $\sigma^j(\ultau)=\sigma^j(\s)$ for some $j\geq 1$. Moreover, $\gamma^{i+1}_i$ does not belong to any other canonical ray.
	\end{enumerate}
\end{prop}

We start our analysis on the map $f(z)=\cosh(z)$. Following Observation~\ref{obs_equiv_overlapping}, we shall provide a combinatorial description of the equivalence classes of $J(\F)_\pm /{\sim}$ in terms of the signed addresses of their images under $\tilde{\phi}$. 

For a function $f\in \B$, the partition of a neighbourhood of infinity into fundamental domains is commonly regarded as a \textit{static partition} in the sense that the curve $\delta$ and domain $D\supset S(f)$ in its definition do not have dynamical meaning for $f$. In particular, dynamic rays of $f$ might cross the boundaries of fundamental domains infinitely often. Instead, for our specific example, we can define a \textit{dynamical partition}, so that the boundaries of the components are ray tails:

\begin{discussion}[Dynamical partition for $f(z)=\cosh(z)$]\label{dis_partition_cosh} For the function $f(z)\defeq \cosh(z)$, $J(f)=\C$ and $S(f)=\CV(f)=\{-1, 1\}$. Moreover, the curves $\gamma_1\defeq \R \setminus (-\infty, 1)$ and $\gamma_{-1}\defeq \R \setminus (-1, \infty)$ are ray tails joining $1$ and $-1$ to $\infty$ whose forward orbits lie in $\R^+$. Let 
\begin{equation}\label{eq_X}
X\defeq\gamma_1\cup \gamma_{-1}.
\end{equation}
Since $\C \setminus X$ is simply-connected and $S(f)\subset X$, by the Monodromy Theorem, each connected component of $f^{-1}(\C \setminus X)$ is a simply-connected domain, and the restriction of $f$ to it is a conformal isomorphism to its image. More specifically, noting that all preimages of the critical values of $f$ are critical points of local degree $2$, each connected component of $f^{-1}(\C \setminus X)$ is a horizontal strip of the form
	\begin{equation*}
	U_K\defeq \{ z\in \C \text{ such that } K\pi < \Ima z <(K+1)\pi) \}
	\end{equation*}
	for some $K\in\Z$. We denote $\mathcal{U}\defeq \{U_K\}_{K\in \Z}$; see Figure \ref{fig:rays_cosh}.
\end{discussion}

\begin{discussion}[Fixing signed addresses for $f(z)=\cosh(z)$]\label{dis_addr_cosh}
	Let us fix any bounded domain $D\supset [-1,1] \supset S(f)$. Then, $f^{-1}(\C\setminus D)$ consists of two unbounded domains that do not contain the imaginary axis, since $f(i\R)=[-1,1]$. Thus, we can choose $\delta\defeq i\R^+ \setminus D$ and define fundamental domains for $f$ as connected components of $f^{-1}(\C \setminus (\overline{D} \cup \delta))$. In particular, $f^{-1}(\delta)$ equals the collection of horizontal half-lines 
	\begin{equation*}
	\begin{split}
	\{z\in \C : \Rea z<0 & \text{ and }\Ima z= (-1/2+2n)\pi \},\\
	\{z\in \C: \Rea z >0 & \text{ and }\Ima z = (1/2+2 n)\pi \} 
	\end{split}
	\end{equation*}
	for all $n \in \Z$. Thus, each fundamental domain is contained in one of the half-strips
	\begin{equation}\label{fund_cosh_cos}
	\begin{split}
	S_{(n,L)}& \defeq \{z: \Rea z<0, \Ima z\in ((n-1/2)\pi ,(n+3/2)\pi)\},\\
	S_{(n,R)}& \defeq \{z: \Rea z>0, \Ima z\in ((n-3/2)\pi,(n+1/2)\pi)\}.
	\end{split}
	\end{equation}
	For each $(n,\ast)\in \Z \times \{-,+\}$, we denote by $n_\ast$ the unique fundamental domain contained in $S_{(n, \ast)}$. Using these fundamental domains, we define the set of external addresses $\Addr(f)$, and fix the corresponding set of signed external addresses $\Addr(f)_\pm$; see Definition and Theorem \ref{defn_thm_canonical}. In particular, for the curves $\gamma_1$ and $\gamma_{-1}$ introduced in \ref{dis_partition_cosh}, it holds that $f(\gamma_{-1}) \subset \gamma_1$, $f(\gamma_1)\subset \gamma_1$, $\gamma_1 \subset {\color{NavyBlue}0_R}$ and $\gamma_{-1}\subset {\color{RedViolet} 0_L}$. Moreover, each of these curves equals two canonical tails with opposite sign, as they do not contain preimages of critical points, see Proposition \ref{prop_overlapping_Gamma}. Hence, $\gamma_1\subset \Gamma({\color{NavyBlue}\cl{0_R}},\ast)$ and $\gamma_{-1}\subset \Gamma({\color{RedViolet} 0_L} {\color{NavyBlue}\cl{0_R}},\ast)$ for both $\ast \in \{-,+\}$; see Figure \ref{fig:fund_cosh}. 
\end{discussion}

\begin{figure}[htb]
\begingroup%
\makeatletter%
\providecommand\color[2][]{%
	\errmessage{(Inkscape) Color is used for the text in Inkscape, but the package 'color.sty' is not loaded}%
	\renewcommand\color[2][]{}%
}%
\providecommand\transparent[1]{%
	\errmessage{(Inkscape) Transparency is used (non-zero) for the text in Inkscape, but the package 'transparent.sty' is not loaded}%
	\renewcommand\transparent[1]{}%
}%
\providecommand\rotatebox[2]{#2}%
\ifx\svgwidth\undefined%
\setlength{\unitlength}{310.92519531bp}%
\ifx\svgscale\undefined%
\relax%
\else%
\setlength{\unitlength}{\unitlength * \real{\svgscale}}%
\fi%
\else%
\setlength{\unitlength}{\svgwidth}%
\fi%
\global\let\svgwidth\undefined%
\global\let\svgscale\undefined%
\makeatother%
\begin{picture}(1,0.89527067)%
\put(0,0){\includegraphics[width=\unitlength,page=1]{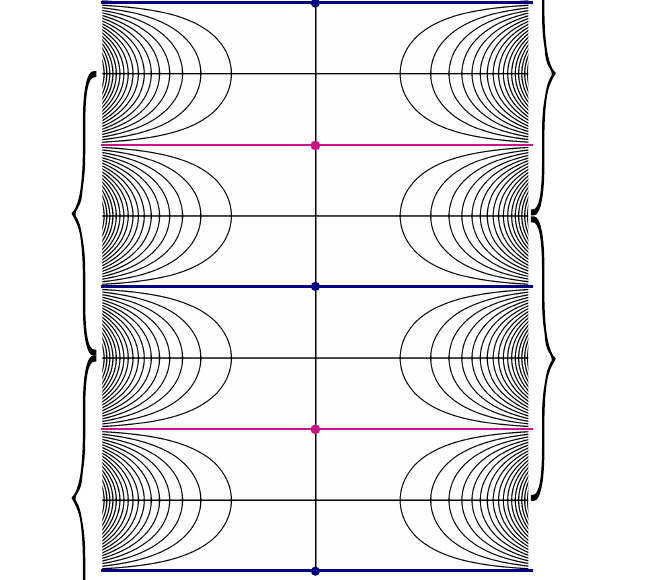}}%
\put(0.87259648,0.77319279){\color[rgb]{0,0,0.50196078}\makebox(0,0)[lb]{\smash{\fontsize{9pt}{1em}$1_R$}}}%
\put(0.49841163,0.42299278){\color[rgb]{0,0,0.50196078}\makebox(0,0)[lb]{\smash{\fontsize{9pt}{1em}$0$}}}%
\put(0.49682624,0.64019487){\color[rgb]{0.78039216,0.08235294,0.52156863}\makebox(0,0)[lb]{\smash{\fontsize{9pt}{1em}$\pi i$}}}%
\put(0.4952408,0.86215321){\color[rgb]{0,0,0.50196078}\makebox(0,0)[lb]{\smash{\fontsize{9pt}{1em}$2\pi i$}}}%
\put(0.48575493,0.02421196){\color[rgb]{0,0,0.50196078}\makebox(0,0)[lb]{\smash{\fontsize{9pt}{1em}$-2\pi i$}}}%
\put(0.48734037,0.19709332){\color[rgb]{0.78039216,0.08235294,0.52156863}\makebox(0,0)[lb]{\smash{\fontsize{9pt}{1em}$-\pi i$}}}%
\put(0.86680642,0.33018975){\color[rgb]{0,0,0.50196078}\makebox(0,0)[lb]{\smash{\fontsize{9pt}{1em}$0_R$}}}%
\put(0.04825316,0.10817699){\color[rgb]{0.78039216,0.08235294,0.52156863}\makebox(0,0)[lb]{\smash{\fontsize{9pt}{1em}$-1_L$}}}%
\put(0.05979315,0.54886508){\color[rgb]{0.78039216,0.08235294,0.52156863}\makebox(0,0)[lb]{\smash{\fontsize{9pt}{1em}$0_L$}}}%
\end{picture}%
\endgroup%

\caption[ASD]{\label{fig:fund_cosh} Partition of the plane into fundamental domains and itinerary components of $f(z)=\cosh(z)$. Each strip of height $\pi$ between two coloured horizontal lines is an itinerary domain. Strips that contain some fundamental domains are indicated by keys. Black horizontal lines are preimages of the imaginary axis, and the rest of curves are the preimages of all horizontal lines.}
\end{figure}

The dynamical partition from \ref{dis_partition_cosh} will help us determine that no two dynamic rays of $f$ land together. This is because since the boundaries of the elements in $\mathcal{U}$ are ray tails, as we shall see, no other ray tails can \textit{cross} them. More precisely, in the next proposition, we assign to each curve $\{\overline{\Gamma(\s, \ast)}\}_{(\s,\ast)\in \Addr(f)_\pm}$ a unique element of $\mathcal{U}$:
\begin{prop}[Each canonical ray is in the closure of a unique $U\in \mathcal{U}$]\label{prop_itin_cosh} For each $(\s, \ast)\in \Addr(f)_\pm$, there exists a unique component $U\in \mathcal{U}$ such that $\overline{\Gamma(\s, \ast)} \subset \cl{U}$. We denote 
	\[U(\s, \ast)\defeq U.\]
\end{prop}
\begin{proof}
	Since, as described in \ref{dis_addr_cosh}, the set $X$ defined in \eqref{eq_X} consists of four canonical tails that overlap pairwise, and, in addition, $f$ is totally ramified, exactly four canonical tails meet at each critical point of $f^{-1}(X)$, and their union is a connected component of this set. More precisely, following the analysis in \ref{dis_addr_cosh}, $f^{-1}(\gamma_1)$ is the collection of all the horizontal lines $\{2\pi K i\R\}_{K \in \Z}$, that in particular contain the critical points $2\pi K i$ for all $K\in \Z$. Analogously, $f^{-1}(\gamma_{-1})$ is the collection of the horizontal lines $\{2(K+1)\pi i \R\}_{K \in \Z}$ with critical points at $2(K+1)\pi i$ for each $K\in \Z$. Hence, it follows that for each $K\in \Z$ and $\ast\in \{-,+\}$, 
	\begin{align}\label{eq_boundariesUk_cosh}
	2\pi K\R^- & \subset \Gamma({\color{RedViolet} K_L} {\color{NavyBlue}\cl{0_R}},\ast), & (2K+1)\pi\R^- & \subset \Gamma({\color{RedViolet} K_L}{\color{RedViolet} 0_L}{\color{NavyBlue}\cl{0_R}},\ast), \\ 
	2\pi K\R^+ & \subset \Gamma({\color{NavyBlue} K_R \cl{0_R}}, \ast), & (2K+1)\pi\R^+ &\subset \Gamma({\color{NavyBlue} (K+1)_R} {\color{RedViolet} 0_L} {\color{NavyBlue}\cl{0_R}}, \ast).\nonumber
	\end{align}
	We claim that each of the canonical rays displayed in \eqref{eq_boundariesUk_cosh} belongs to the closure of exactly one component of $\mathcal{U}$: to see this, let us consider the curves $\Gamma({\color{NavyBlue} K_R \cl{0_R}}, \ast)$ for some $K\in \Z$ and both $\ast\in \{-,+\}$. Then, since $\Gamma({\color{NavyBlue} K_R \cl{0_R}}, -)$ is a nested sequence of left-extended canonical tails, see \cite[Definition 3.5 and Theorem 3.8]{mio_signed_addr}, and by Proposition \ref{prop_overlapping_Gamma} it can only intersect the boundaries of the elements of $\mathcal{U}$ in the subcurve $2K\pi i \R^+$, we conclude that $\Gamma({\color{NavyBlue} K_R \cl{0_R}}, -) \subset \overline{U_{K-1}}$. Similarly, $\Gamma({\color{NavyBlue} K_R \cl{0_R}}, +) \subset \overline{U_{K}}$, and arguing analogously for the rest of curves in \eqref{eq_boundariesUk_cosh}, the claim follows. Since no other canonical rays apart from those in \eqref{eq_boundariesUk_cosh} intersect $\C \setminus \mathcal{U}$, each of them must be contained in a unique component $U$, as stated.
\end{proof}
\begin{defn}[Itineraries for $f(z)=\cosh(z)$]
	For each $(\s, \ast)\in \Addr(f)_\pm$, we define the \textit{itinerary} of $(\s, \ast)$ as the infinite sequence
	$$\itin(\s, \ast)\defeq U(\s, \ast) U(\sigma(\s), \ast)U(\sigma^2(\s), \ast)\ldots. $$
\end{defn}

\begin{observation}[Itineraries of points]\label{obs_itin_points} Since for each $(\s, \ast) \in \Addr(f)_\pm$, $f(\overline{\Gamma(\s, \ast)})\subset \overline{\Gamma}(\sigma(\s), \ast)$, if $z\in \overline{\Gamma(\s, \ast)}$ and $\itin(\s, \ast)=U_0U_1\ldots$, then $f^i(z)\subset U_i$ for all $i\geq 0$.
\end{observation}

\begin{prop}[Dynamic rays of $z \mapsto \cosh(z)$ do not land together]\label{prop_landing_cosh}
	There are no two dynamic rays of $z \mapsto \cosh(z)$ landing together. 	
\end{prop}
\begin{proof} It suffices to show that there are no two canonical rays landing together, since then, by  Definition and Theorem \ref{defn_thm_canonical}, no two rays would land together. With that aim, let $\Gamma(\s, \ast)$ and $\Gamma(\ultau, \star)$ be two different canonical rays, that is, $(\s, \ast)\neq (\ultau, \star)$, and let $p_{(\s, \ast)}$ and $p_{(\ultau, \star)}$ be their respective endpoints. If $\Gamma(\s, \ast)$ and $\Gamma(\ultau, \star)$ land together, i.e., $p_{(\s, \ast)}= p_{(\ultau, \star)}$, then by Proposition \ref{prop_itin_cosh} and Observation \ref{obs_itin_points}, $\itin(\s, \ast)=\itin(\ultau, \ast)=U_0U_1\ldots$. Moreover, for each $i\geq 0$, $f^i(p_{(\s, \ast)})=f^i(p_{(\ultau, \star)})$ must belong to the interior of $U_i$, since by \ref{dis_addr_cosh}, the  boundaries of the elements of $\mathcal{U}$ are formed by canonical tails that are contained in dynamic rays. For the same reason, $i\R^+$ and $f^{-1}(i\R^+)$ do not contain any endpoints of dynamic rays, as they are formed by pieces of ray tails; see \ref{dis_overlapping_cosh} for more details. Then, for each $i\geq 0$, $f^i(p_{(\s, \ast)})=f^i(p_{(\ultau, \star)})$ belongs to a half-strip of the form 
	\begin{equation}\label{eq_4strips}
	\begin{split}
	HS^i_{(k,L)}&\defeq \left\{ z: \Rea z\leq 0, \Ima z\in \left(\frac{(i+4k)\pi}{4} ,\frac{(i+1+4k)\pi}{4}\right) \right\},\\
	HS^i_{(k,R)}&\defeq \left\{ z: \Rea z\geq 0, \Ima z\in \left(\frac{(i+4k)\pi}{4} ,\frac{(i+1+4k)\pi}{4}\right) \right\}
	\end{split}
	\end{equation}
	for some $k\in \Z$ and $0\leq i \leq 3$. However, each of the half-strips in \eqref{eq_4strips} intersects a single fundamental domain, see \eqref{fund_cosh_cos} and Figure \ref{fig:fund_cosh}, which contradicts $(\s, \ast)\neq (\ultau, \star)$.
\end{proof} 

\noindent Finally, we provide a combinatorial description of the overlaps that occur between canonical rays in terms of their signed addresses, which by Observation \ref{obs_equiv_overlapping} and Proposition \ref{prop_landing_cosh} suffices to describe the equivalence classes in $J(\F)/ {\sim}$.

\begin{discussion}[Overlapping of canonical tails for $ z \mapsto \cosh(z)$]\label{dis_overlapping_cosh}
	Recall that by Proposition \ref{prop_overlapping_Gamma}, for all $(\s, \ast)\in \Addr(f)_\pm$ such that $\Gamma(\s, \ast) \cap \Orb^-(\Crit (f))=\emptyset$,
	$\Gamma(\s, -)=\Gamma(\s, +).$ Hence, all other overlap occurs between the preimages of the canonical tails that contain $\Crit(f)$. Recall from \ref{dis_partition_cosh} and \ref{dis_addr_cosh} that $\Crit(f)=\{\pi i K: K \in \Z\}$, and each critical point belongs exactly to four canonical rays. Namely, we saw in \eqref{eq_boundariesUk_cosh} that
	\begin{align*}
	\Gamma({\color{RedViolet} K_L} {\color{NavyBlue}\cl{0_R}},-)&= \Gamma({\color{RedViolet} K_L} {\color{NavyBlue}\cl{0_R}},+) &&\text{in } 2K\pi i\R^- &&\text{ for all } K \in \Z. \\
	\Gamma({\color{NavyBlue} K_R \cl{0_R}}, -) &= \Gamma({\color{NavyBlue} K_R \cl{0_R}}, +) &&\text{in } 2K\pi i\R^+ &&\text{ for all } K \in \Z.\\
	\Gamma({\color{RedViolet} K_L}{\color{RedViolet} 0_L}{\color{NavyBlue}\cl{0_R}},-) &= \Gamma({\color{RedViolet} K_L}{\color{RedViolet} 0_L}{\color{NavyBlue}\cl{0_R}},+) &&\text{in } (2K+1)\pi i\R^{-} &&\text{ for all } K \in \Z. \\
	\Gamma({\color{NavyBlue} (K+1)_R} {\color{RedViolet} 0_L} {\color{NavyBlue}\cl{0_R}}, -) &= \Gamma({\color{NavyBlue} (K+1)_R} {\color{RedViolet} 0_L} {\color{NavyBlue}\cl{0_R}}, +) &&\text{in } (2K+1)\pi i\R^{+} &&\text{ for all } K \in \Z.
	\end{align*}
	
\begin{figure}[htb]
		\centering
		\resizebox{0.6
			\textwidth}{!}{\begingroup%
			\makeatletter%
			\providecommand\color[2][]{%
				\errmessage{(Inkscape) Color is used for the text in Inkscape, but the package 'color.sty' is not loaded}%
				\renewcommand\color[2][]{}%
			}%
			\providecommand\transparent[1]{%
				\errmessage{(Inkscape) Transparency is used (non-zero) for the text in Inkscape, but the package 'transparent.sty' is not loaded}%
				\renewcommand\transparent[1]{}%
			}%
			\providecommand\rotatebox[2]{#2}%
			\ifx\svgwidth\undefined%
			\setlength{\unitlength}{368.50393066bp}%
			\ifx\svgscale\undefined%
			\relax%
			\else%
			\setlength{\unitlength}{\unitlength * \real{\svgscale}}%
			\fi%
			\else%
			\setlength{\unitlength}{\svgwidth}%
			\fi%
			\global\let\svgwidth\undefined%
			\global\let\svgscale\undefined%
			\makeatother%
			\begin{picture}(1,0.69230772)%
			\put(0,0){\includegraphics[width=\unitlength]{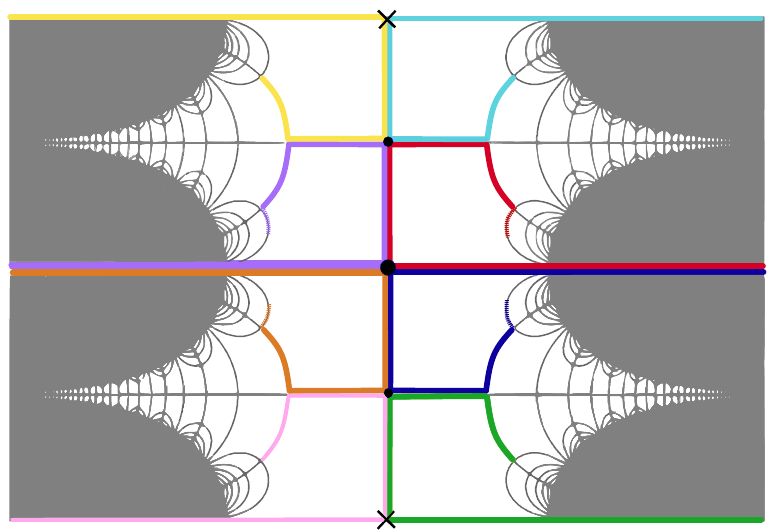}}%
			\put(0.51311181,0.21003289){\color[rgb]{0,0,0}\makebox(0,0)[lb]{\smash{$\fontsize{9pt}{1em} -i\frac{\pi}{2}$}}}%
			\put(0.51591789,0.35817888){\color[rgb]{0,0,0}\makebox(0,0)[lb]{\smash{$0$}}}%
			\put(0.51520385,0.52821125){\color[rgb]{0,0,0}\makebox(0,0)[lb]{\smash{$\fontsize{9pt}{1em} i\frac{\pi}{2}$}}}%
			\end{picture}%
			\endgroup%
		}
		\caption{Some canonical tails in the Julia set of the map $z \mapsto \cosh(z)$ that belong to canonical rays. Colour code: the red tail is in $\Gamma({\color{NavyBlue}\cl{0_R}}, + )$, the purple in $\Gamma({\color{RedViolet} 0_L} {\color{NavyBlue}\cl{0R}},-)$, the orange in $\Gamma({\color{RedViolet} 0_L} {\color{NavyBlue}\cl{0_R}},+)$ and the dark blue one in $\Gamma({\color{NavyBlue}\cl{0_R}}, -)$. Then, the light blue tail is in $\Gamma({\color{NavyBlue}1_R}{\color{RedViolet} 0_L} {\color{NavyBlue}\cl{0_R}},-)$, the yellow one in $\Gamma({\color{RedViolet} 0_L 0_L} {\color{NavyBlue}\cl{0_R}},+)$, the green is in $\Gamma({\color{NavyBlue}0_R}{\color{RedViolet} 0_L} {\color{NavyBlue}\cl{0_R}},+)$ and the pink is in $\Gamma({\color{RedViolet} -1_L 0_L}{\color{NavyBlue}\cl{0_R}},-)$.}
		\label{fig:rays_cosh}
	\end{figure}

Further identifications between the canonical rays above occur at the connected components of $f^{-1}([0,1])$ and $f^{-1}([-1,0])$. More precisely, if 
	for each $\pm\in \{-,+\}$ we denote 
	$$V_K(\pm)\defeq \left\{z \in \C: \Rea z=0 \text{ and } \Ima z\in [K\pi i,(K \pm 1/2)\pi i ]\right\},$$
	then we have the following identifications:
	\begin{align*}
	\Gamma({\color{RedViolet} K_L} {\color{NavyBlue}\cl{0_R}},\mp) &= \Gamma({\color{NavyBlue} K_R \cl{0_R}}, \pm) \qquad &&\text{in } V_{2K}(\pm) &&\text{for all } K \in \Z. \\
	\Gamma({\color{RedViolet} K_L}{\color{RedViolet} 0_L} {\color{NavyBlue}\cl{0_R}},\mp) & = \Gamma({\color{NavyBlue} (K+1)_R} {\color{RedViolet} 0_L} {\color{NavyBlue}\cl{0_R}}, \pm) &&\text{in } V_{2K+1}(\pm) &&\text{for all } K \in \Z, 
	\end{align*} 
	where $\mp=+$ when $\pm=-$, and $\mp=-$ when $\pm=+$; see Figure \ref{fig:rays_cosh}. By Proposition \ref{prop_overlapping_Gamma}, any further overlap between canonical rays occur at preimages of the overlap already stated. More specifically, since we have already described all overlap occurring at the boundaries of the elements in $\mathcal{U}$, all remaining ones must occur between canonical rays whose itinerary agrees on the first $N$-th elements and that are mapped under $f^N$ to the coordinate axes for some $N\in \N$. Given the geometry of the fundamental domains, in particular contained in half-strips of height $\pi$, see \eqref{fund_cosh_cos} and Figure \ref{fig:fund_cosh}, providing the identifications at the preimages of the positive imaginary axis allows us to express identifications solely using external addresses. This is because any further identifications must occur at the intersection of a fundamental domain with a component of $\mathcal{U}$, and hence, the corresponding first entries on the signed addresses of rays that overlap are the same. More specifically, for each $\pm\in \{-,+\}$, let us denote
	$$I^K_{{\color{NavyBlue} P_R}}(\pm)\defeq f^{-1}(V_K(\pm)) \cap (P+\pi i/2) \R^+ \quad \text{ and } \quad I^K_{{\color{RedViolet} P_L}}(\pm)\defeq f^{-1}(V_K(\pm)) \cap (P+3\pi i/2) \R^-.$$ Then, for all $K \in \Z^+ \text{ and } P\in \Z$, 
	\begin{align*}
	\Gamma({\color{NavyBlue}(P+1)_R}{\color{RedViolet} K_L} {\color{NavyBlue}\cl{0_R}},\mp) &= \Gamma({\color{NavyBlue} P_R K_R \cl{0_R}}, \pm) &&\text{in } I^{2K}_{{\color{NavyBlue} P_R}}(\pm) \\
	\Gamma({\color{NavyBlue}(P+1)_R}{\color{RedViolet} K_L}{\color{RedViolet} 0_L} {\color{NavyBlue}\cl{0_R}},\mp) & = \Gamma({\color{NavyBlue} P_R (K+1)_R} {\color{RedViolet} 0_L} {\color{NavyBlue}\cl{0_R}}, \pm) &&\text{in } I^{2K+1}_{{\color{NavyBlue} P_R}}(\pm)\\ 
	\Gamma({\color{RedViolet}P_L}{\color{RedViolet} K_L} {\color{NavyBlue}\cl{0_R}},\mp) &= \Gamma({\color{RedViolet}(P+1)_L}{\color{NavyBlue} K_R \cl{0_R}}, \pm) && \text{in } I^K_{{\color{RedViolet} P_L}}(\pm) \\
	\Gamma({\color{RedViolet}P_L}{\color{RedViolet} K_L}{\color{RedViolet} 0_L} {\color{NavyBlue}\cl{0_R}},\mp) & = \Gamma({\color{RedViolet}(P+1)_L}{\color{NavyBlue} (K+1)_R} {\color{RedViolet} 0_L} {\color{NavyBlue}\cl{0_R}}, \pm) &&\text{in } I^{2K+1}_{{\color{RedViolet} P_L}}(\pm),
	\end{align*}
	where $\mp=+$ when $\pm=-$, and $\mp=-$ when $\pm=+$; see Figure \ref{fig:rays_cosh}. Moreover, for all other canonical rays, if $\delta$ is some bounded curve in $J(f)$, then
	\begin{equation}\label{eq_further_overlap}
	\Gamma(\s, \ast) =\Gamma(\ul{\tau}, \star) \text { in } \delta \iff \renewcommand{\arraystretch}{1.5}\left\{\begin{array}{@{}l@{\quad}l@{}}
	\exists \, n>0 : s_j=\tau_j \text{ for all } j\leq n \quad \text{and}\\
	\Gamma(\sigma^n(\s), \ast) = \Gamma(\sigma^n(\ul{\tau}), \star) \text { in } f^n(\delta).
	\end{array}\right.\kern-\nulldelimiterspace
	\end{equation}
\end{discussion}

\begin{figure}[htb]
	\centering
	\begingroup%
	\makeatletter%
	\providecommand\color[2][]{%
		\errmessage{(Inkscape) Color is used for the text in Inkscape, but the package 'color.sty' is not loaded}%
		\renewcommand\color[2][]{}%
	}%
	\providecommand\transparent[1]{%
		\errmessage{(Inkscape) Transparency is used (non-zero) for the text in Inkscape, but the package 'transparent.sty' is not loaded}%
		\renewcommand\transparent[1]{}%
	}%
	\providecommand\rotatebox[2]{#2}%
	\ifx\svgwidth\undefined%
	\setlength{\unitlength}{311.81103516bp}%
	\ifx\svgscale\undefined%
	\relax%
	\else%
	\setlength{\unitlength}{\unitlength * \real{\svgscale}}%
	\fi%
	\else%
	\setlength{\unitlength}{\svgwidth}%
	\fi%
	\global\let\svgwidth\undefined%
	\global\let\svgscale\undefined%
	\makeatother%
	\begin{picture}(1,0.80909082)%
	\put(0,0){\includegraphics[width=\unitlength]{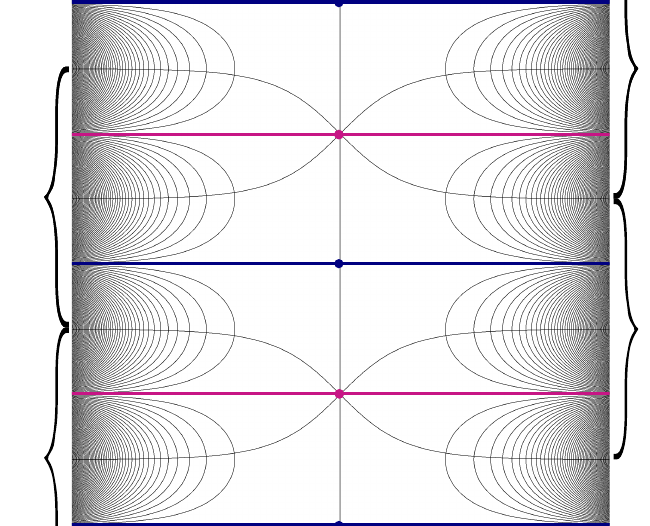}}%
	\put(0.99233345,0.69532413){\color[rgb]{0,0,0.50196078}\makebox(0,0)[lb]{\smash{\fontsize{9pt}{1em}$1_R$}}}%
	\put(0.53323567,0.37316004){\color[rgb]{0,0,0.50196078}\makebox(0,0)[lb]{\smash{\fontsize{9pt}{1em}$0$}}}%
	\put(0.53007391,0.77667498){\color[rgb]{0,0,0.50196078}\makebox(0,0)[lb]{\smash{\fontsize{9pt}{1em}$2\pi i$}}}%
	\put(0.52061497,0.01086533){\color[rgb]{0,0,0.50196078}\makebox(0,0)[lb]{\smash{\fontsize{9pt}{1em}$-2\pi i$}}}%
	\put(0.52287156,0.16749591){\color[rgb]{0.78039216,0.08235294,0.52156863}\makebox(0,0)[lb]{\smash{\fontsize{9pt}{1em}$-\pi i$}}}%
	\put(0.9922573,0.28510403){\color[rgb]{0,0,0.50196078}\makebox(0,0)[lb]{\smash{\fontsize{9pt}{1em}$0_R$}}}%
	\put(0.00027239,0.09936939){\color[rgb]{0.78039216,0.08235294,0.52156863}\makebox(0,0)[lb]{\smash{\fontsize{9pt}{1em}$-1_L$}}}%
	\put(0.0259702,0.49676373){\color[rgb]{0.78039216,0.08235294,0.52156863}\makebox(0,0)[lb]{\smash{\fontsize{9pt}{1em}$0_L$}}}%
	\put(0.54216531,0.56099235){\color[rgb]{0.78039216,0.08235294,0.52156863}\makebox(0,0)[lb]{\smash{\fontsize{9pt}{1em}$\pi i$}}}%
	\put(0.52287156,0.16749591){\color[rgb]{0.78039216,0.08235294,0.52156863}\makebox(0,0)[lb]{\smash{\fontsize{9pt}{1em}$-\pi i$}}}%
	\put(0.53007391,0.77667498){\color[rgb]{0,0,0.50196078}\makebox(0,0)[lb]{\smash{\fontsize{9pt}{1em}$2\pi i$}}}%
	\put(0.53323567,0.37316004){\color[rgb]{0,0,0.50196078}\makebox(0,0)[lb]{\smash{\fontsize{9pt}{1em}$0$}}}%
	\put(0.52061497,0.01086533){\color[rgb]{0,0,0.50196078}\makebox(0,0)[lb]{\smash{\fontsize{9pt}{1em}$-2\pi i$}}}%
	\end{picture}%
	\endgroup%
	
	\caption[as]{Partition of the plane into fundamental domains and itinerary components for $z \mapsto \cosh^2(z)$. Each strip of height $\pi$ between two coloured lines is an itinerary domain. Some fundamental domains are indicated by keys. Also displayed are the first (coloured lines and imaginary axis), second (other curves that meet at $\{K\pi i: K\in \Z\}$) and third (rest of curves) iterated preimages of the real line.}
	\label{fig:fund_cosh222}
\end{figure}

We have now provided a combinatorial description of all overlappings occurring between canonical rays of $z \mapsto \cosh(z)$, as well as determined that no two of its dynamic rays land together. Hence, we have concluded our analysis.

\begin{example}[Overlapping for the map $z \mapsto \cosh^2(z)$]
	Seeking an example where a critical value is mapped to another critical value, we consider the function 
	$$f(z)\defeq \cosh^2 (z)\defeq \cosh (z) \cdot \cosh (z) =\dfrac{e^{2z}+ e^{-2z}}{4}+ \dfrac{1}{2}.$$
	Even if, strictly speaking, this function is not in the cosine family, it is in the same parameter space, since $f$ is conjugate to $\frac{e}{2}e^w+ \frac{e^{-1}}{2}e^{-w}$ via $z \mapsto 2z-1$. The dynamics of $f$ have already been explored, namely in \cite{ripponFast}, where it is shown that $I(f)$ (and in fact its \textit{fast escaping set}) is connected. The function $f$ is $\pi i$-periodic, and $S(f)= \CV(f)=\{0,1\}$, with $f(0)=1 \in I(f)$ and $\Orb^+(1) \subset \R^+$. The critical points of $f$ are $f^{-1}(1)=\{K\pi i : K \in \Z \}$ and $f^{-1}(0)=\{(K+1/2)\pi i : K \in \Z \}.$ As for the map $z \mapsto \cosh(z)$, we can join the critical values to infinity using the ray tails $\gamma_1\defeq \R \setminus (-\infty, 1)$ and $\gamma_{0}\defeq \R \setminus (0, \infty)$. We define a dynamical partition for $f$ as the union of the connected components of $\C \setminus f^{-1}(\gamma_0\cup \gamma_1)$, which are horizontal half-strips of $\pi/2$-height that we call \textit{itinerary components}; see Figure \ref{fig:fund_cosh222}. By choosing a bounded domain $D$ containing $[0,1]$ and $\delta\defeq i\R^+\setminus D$, we can define fundamental domains for $f$ as the connected components of $f^{-1}(\C\setminus (\overline{D}\cup \delta))$. In particular, we label as ${\color{NavyBlue} 0_R}$ the component that contains an unbounded subset of $\R^+$, and as ${\color{RedViolet} 0_L}$ the component that contains an unbounded subcurve of $\R^-$. Additionally, we label as ${\color{NavyBlue} K_R}$ and ${\color{RedViolet} K_L}$ their respective $K\pi i$-translates; see Figure \ref{fig:fund_cosh222}. With this notation, the following identifications between canonical rays occur:
	\begin{align*}
	\Gamma({\color{NavyBlue} K_R \cl{0_R}}, -) &= \Gamma({\color{NavyBlue} K_R \cl{0_R}}, +) &&\text{in } K\pi i \R^+ &&\text{ for all } K \in \Z. \\
	\Gamma({\color{RedViolet} K_L} {\color{NavyBlue}\cl{0_R}},-) &= \Gamma({\color{RedViolet} K_L} {\color{NavyBlue}\cl{0_R}},+) &&\text{in } K\pi i\R^{-} &&\text{ for all } K \in \Z. 
	\end{align*}
	
	The main difference between the overlap on the canonical rays of $z \mapsto\cosh(z)$ and the overlap on those of $f$, is that since the critical points in $f^{-1}(0)$ are mapped to a critical point, each of them belongs to eight canonical tails rather than to four, and moreover, both singular values belong to the canonical rays $\Gamma({\color{NavyBlue} K_R \cl{0_R}}, \ast)$ for both $\ast\in \{-,+\}$. Compare Figures \ref{fig:rays_cosh} and \ref{fig:rays_cosh222}. Then, we further have the identifications
	\begin{align*}
	\Gamma({\color{RedViolet} K_L} {\color{NavyBlue}\cl{0_R}},\mp) &= \Gamma({\color{NavyBlue} K_R \cl{0_R}}, \pm) &&\text{in } [K\pi i,(1 \pm 1/2)K\pi i ]\eqdef V_K(\pm) &&\text{for all } K \in \Z,
	\end{align*}
	where $\mp=+$ when $\pm=-$, and $\mp=-$ when $\pm=+$. If for each $\pm \in \{-,+\}$ and $K\in \Z$ we let
	$$I^K_{{\color{NavyBlue} P_R}}(\pm)\defeq f^{-1}(V_K(\pm)) \cap{\color{NavyBlue} \overline{P_R} \cap \overline{(P+1)_R}  } \ \text{ and } \ I^K_{{\color{RedViolet} P_L}}(\pm)\defeq f^{-1}(V_K(\pm)) \cap {\color{RedViolet} \overline{P_L} \cap \overline{(P+1)_L}}, $$ 
	then for all $K \in \Z^+$ and $P\in \Z$, 
	\begin{align*}
	\Gamma({\color{NavyBlue}(P+1)_R}{\color{RedViolet} K_L} {\color{NavyBlue}\cl{0_R}},\mp) &= \Gamma({\color{NavyBlue} P_R K_R \cl{0_R}}, \pm) &&\text{in } I^K_{{\color{NavyBlue} P_R}}(\pm) \\
	\Gamma({\color{RedViolet}(P+1)_L}{\color{NavyBlue} K_R \cl{0_R}}, \pm) &= \Gamma({\color{RedViolet}P_L}{\color{RedViolet} K_L} {\color{NavyBlue}\cl{0_R}},\mp) && \text{in } I^K_{{\color{RedViolet} P_L}}(\pm). 
	\end{align*}
	Any further identifications between canonical rays occur within the intersection of a fundamental domain and an itinerary domain, and hence can be expressed using \eqref{eq_further_overlap}. Moreover, arguing as for $z \mapsto \cosh(z)$, no two dynamic rays of $z \mapsto \cosh^2(z)$ land together, see the proof of Proposition~\ref{prop_landing_cosh}.
	
	\begin{figure}[htb]
	\centering
	\resizebox{0.6
		\textwidth}{!}{\begingroup%
		\makeatletter%
			\providecommand\color[2][]{%
				\errmessage{(Inkscape) Color is used for the text in Inkscape, but the package 'color.sty' is not loaded}%
				\renewcommand\color[2][]{}%
			}%
			\providecommand\transparent[1]{%
				\errmessage{(Inkscape) Transparency is used (non-zero) for the text in Inkscape, but the package 'transparent.sty' is not loaded}%
				\renewcommand\transparent[1]{}%
			}%
			\providecommand\rotatebox[2]{#2}%
			\newcommand*\fsize{\dimexpr\f@size pt\relax}%
			\newcommand*\lineheight[1]{\fontsize{\fsize}{#1\fsize}\selectfont}%
			\ifx\svgwidth\undefined%
			\setlength{\unitlength}{396.8503937bp}%
			\ifx\svgscale\undefined%
			\relax%
			\else%
			\setlength{\unitlength}{\unitlength * \real{\svgscale}}%
			\fi%
			\else%
			\setlength{\unitlength}{\svgwidth}%
			\fi%
			\global\let\svgwidth\undefined%
			\global\let\svgscale\undefined%
			\makeatother%
			\begin{picture}(1,0.71428571)%
			\setlength\tabcolsep{0pt}%
			\put(0,0){\includegraphics[width=\unitlength,page=1]{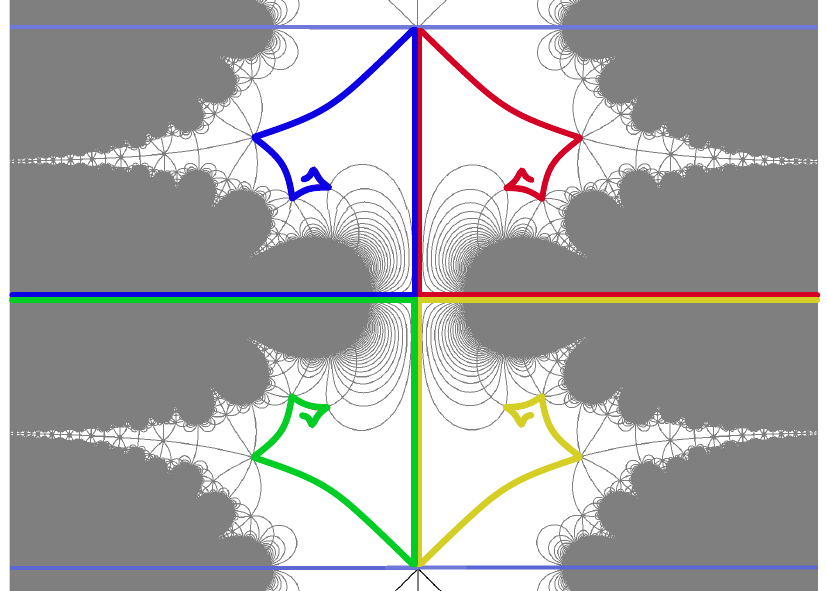}}%
			\end{picture}%
			\endgroup
		}	
		\caption[asd]{Picture showing four canonical tails of $z \mapsto \cosh^2(z)$ that contain the critical point $0$. These belong to the canonical rays $\Gamma({\color{RedViolet} 0_L} {\color{NavyBlue}\cl{0_R}},-)$ (in blue), $\Gamma({\color{NavyBlue}\cl{0_R}}, + )$ (in red), $\Gamma({\color{RedViolet} 0_L} {\color{NavyBlue}\cl{0_R}},+)$ (in green) and $\Gamma({\color{NavyBlue}\cl{0_R}}, -)$ (in yellow).}
		\label{fig:rays_cosh222}
	\end{figure}
	
\end{example}

\appendix
\section{Itineraries and rays landing together}\label{sec_landing}

This section concerns certain sps maps in $\B$ that have dynamic rays on their Julia sets. Namely, in \cite{mio_newCB}, the following class is introduced:
\begin{defn} We say that $f \in \B$ belongs to the \textit{class $\CB$} if $J(\lambda f)$ is a Cantor bouquet for some $\vert \lambda \vert$ sufficiently small.
\end{defn}
It follows from \cite[Theorem 1.4]{mio_newCB} that some iterate of each escaping point of any function $f\in \CB$ can be joined to infinity by a dynamic ray, and that the concept of canonical rays can be extended to maps in this class. Moreover, the class $\CB$ includes all finite compositions of class $\B$ functions of finite order, and in particular, all cosine maps; see \cite[Proposition 6]{mio_newCB}.
 
In this section, we extend the concept of \textit{itineraries} introduced in \S\ref{sec_cosh_indent} for the functions $z\mapsto \cosh(z)$ and $z \mapsto \cosh^2(z)$ to all strongly postcritically separated functions in class $\CB$, and use it to provide some combinatorial criteria for their canonical rays landing together in Theorem \ref{thm_landing_intro}. This idea, that comes from polynomial dynamics, has already been used in the study of the exponential and cosine families; see for example \cite{lasseTopExp, dierkParadox}. Moreover, a more general and systematic definition of itineraries for geometrically finite functions can be found in \cite[Chapter~5]{helena_thesis}. We say that $f\in \B$ is \textit{geometrically finite} if $S(f)\cap F(f)$ is compact and $P_J$ is finite. In particular, only attracting and parabolic basins can occur as Fatou components for maps in this class; \cite[Proposition 2.5]{helena_landing}. Since for strongly postcritically separated functions in class $\B$ the only possible Fatou components are attracting basins, \cite[Lemma 2.6]{mio_orbifolds}, some of the definitions in \cite[Chapter 5]{helena_thesis} will adapt to our setting. 

For the rest of the section, let us fix $f\in \CB$ and strongly postcritically separated.
We note that the definition of signed addresses that we provided for sps cosine maps in \eqref{eq_signedadd}, as well as that of canonical rays, extends to  all sps maps in $\CB$, see \cite[\S3]{mio_signed_addr}. Let us also fix $g\defeq \lambda f$ of disjoint type for some $\lambda \in \C^\ast$, and let $J(g)_\pm\defeq J(g)\times \{-,+\}.$ For each $\s\in \Addr(g)$ and $\ast \in \{-,+\}$, denote $J_{(\s, \ast)}\defeq J_\s\times \{\ast\}$. Let us define the function $\tilde{g}\colon J(g)_\pm\rightarrow J(g)_\pm$ by $\tilde{g}(z, \ast)\defeq(g(z), \ast).$ Then, \cite[Theorem 6.5]{mio_splitting} states that there exists a continuous surjective map 
\begin{equation}\label{eq_phiCB}
\phi: J(g)_\pm \to J(f)
\end{equation}
such that 
\begin{equation}\label{eq_com_CB}
	f\circ\phi = \phi\circ \tilde{g}.
\end{equation}
Moreover, for each  $(\s, \ast) \in \Addr(g)_\pm$, $\phi \colon J_{(\s, \ast)} \rightarrow \overline{\Gamma(\s, \ast)}$ is a bijection and  $\overline{\Gamma(\s, \ast)}$ is a canonical ray together with its endpoint. In particular, $\phi$  establishes a one-to-one correspondence between $\Addr(g)_\pm$ and $\Addr(f)_\pm$; see \cite[Observation~6.6]{mio_splitting}. 

Our first goal is to define a forward-invariant closed set $K \Supset P(f)$ such that $J(f)\subset \cl{\C \setminus K}$ and so that each connected component of $\C \setminus K$ is simply-connected. We will then define itineraries for $f$ using such components. More specifically, we will define $K$ as a union of two sets $K_J$ and $K_F$, the first consisting of the union of all canonical rays whose endpoints are in $P_J$, and the second comprising all points in $P_F$. 
Recall from Definition~\ref{def_ray} that $E(f)$ denotes the set of endpoints of dynamic rays of $f$.
\begin{prop} [Set of rays sharing their endpoint is closed]\label{prop_endpoints} For each $z\in E(f)$, denote by $\mathcal{R}(z)$ the set of canonical rays that land at~$z$. Then, $\cl{\mathcal{R}}(z)\defeq\mathcal{R}(z) \cup \{z\}$ is closed. 
\end{prop}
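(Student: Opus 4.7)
The plan is to use the semiconjugacy $\phi: J(g)_\pm \to J(f)$ of Definition and Theorem~\ref{defnprop_spsinCB} together with a compactification argument in the model space, in analogy with the cosine-specific compactification of Proposition~\ref{prop_properties_cosine}. Given an arbitrary convergent sequence $(w_k)\subset \cl{\mathcal{R}}(z)$ with $w_k\to w\in\C$, I want to show $w\in\cl{\mathcal{R}}(z)$. The case $w_k=z$ infinitely often is immediate, so assume $w_k\ne z$ for every $k$. Then each $w_k$ lies on some canonical ray $\Gamma(\s_k,\ast_k)$ that lands at $z$, so by the bijection between $\{(t,\s_k,\ast_k):t\ge t_{\s_k}\}$ and $\overline{\Gamma}(\s_k,\ast_k)$ (an ingredient of Definition and Theorem~\ref{defnprop_spsinCB}, used exactly as in Theorem~\ref{thm_strong_cosine}), I can pick $t_k\ge t_{\s_k}$ with $\phi(t_k,\s_k,\ast_k)=w_k$ and $\phi(t_{\s_k},\s_k,\ast_k)=z$.

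Next I would extend $\phi$ continuously to the one-point compactification $J(g)_\pm\cup\{\tilde\infty\}$, sending $\tilde\infty$ to $\infty\in\widehat{\C}$; this is built for sps maps in $\CB$ by the same construction as Proposition~\ref{prop_properties_cosine} and Observation~\ref{obs_relmodels}. By compactness, after passing to a subsequence, $(t_k,\s_k,\ast_k)$ converges in $J(g)_\pm\cup\{\tilde\infty\}$. Since $\phi(t_k,\s_k,\ast_k)=w_k\to w\ne\infty$, continuity rules out the limit being $\tilde\infty$, so we obtain $(t,\s,\ast)\in J(g)_\pm$ with $\phi(t,\s,\ast)=w$. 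The inequality $0\le t_{\s_k}\le t_k$ keeps the landing parameters bounded, and after a further subsequence $t_{\s_k}\to t^*\in[0,t]$; since the second and third coordinates have already converged to $(\s,\ast)$, we get $(t_{\s_k},\s_k,\ast_k)\to(t^*,\s,\ast)$ and thus $\phi(t^*,\s,\ast)=z$.

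The main obstacle is then to conclude $t^*=t_\s$, i.e.\ that the limit canonical ray $\Gamma(\s,\ast)$ actually lands at $z$ rather than merely passing through it. Lower semicontinuity of $\s\mapsto t_\s$ (from the definition of $t_\s$ as the minimum $t$ with $(t,\s)\in J(\F)$, combined with closedness of the model under convergence) yields $t^*\ge t_\s$. To exclude the strict inequality I would use the overlap structure from Proposition~\ref{prop_overlapping_Gamma}: if $t^*>t_\s$, then $z$ lies in the interior of $\Gamma(\s,\ast)$, forcing $z$ to be a preimage of a critical point at which $\Gamma(\s,\ast)$ and finitely many other canonical rays meet and pairwise identify their outer tails. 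Among the rays meeting at $z$, at least one actually lands at $z$ (this is exactly where the $\Gamma$-curves $\Gamma(\s_k,\ast_k)$ are being ``captured'' in the limit), and its outer tail past $z$ coincides with that of $\Gamma(\s,\ast)$; hence $w=\phi(t,\s,\ast)$ already belongs to that sibling, and therefore to $\cl{\mathcal{R}}(z)$. In the remaining (generic) case $t^*=t_\s$, the ray $\Gamma(\s,\ast)$ lands at $z$ directly, and since $t\ge t^*=t_\s$, the point $w\in\overline{\Gamma}(\s,\ast)\subset\cl{\mathcal{R}}(z)$ as required.
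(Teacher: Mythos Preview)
Your approach and the paper's share the same ingredients: the semiconjugacy $\phi:J(g)_\pm\to J(f)$, its continuous extension to the one-point compactification, and a compactness argument. The difference is structural. The paper observes directly that $\phi^{-1}(z)$ is closed and consists \emph{only} of endpoints $(e_\s,\ast)$ of model hairs; then the set $\mathcal{P}\defeq\bigcup\{J_{(\s,\ast)}:(e_\s,\ast)\in\phi^{-1}(z)\}\cup\{\tilde\infty\}$ is compact in $J(g)_\pm\cup\{\tilde\infty\}$, and $\overline{\mathcal{R}}(z)\cup\{\infty\}=\phi'(\mathcal{P})$ is compact, hence $\overline{\mathcal{R}}(z)$ is closed. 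No sequence tracking, no parameter-limit case split.

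Your sequential argument is sound up to the point where you have $(t^*,\s,\ast)$ with $\phi(t^*,\s,\ast)=z$ and must decide whether $t^*=t_\s$. The problematic branch is $t^*>t_\s$, and your handling of it is where the gap lies. You write that $z$ interior to $\Gamma(\s,\ast)$ ``forces $z$ to be a preimage of a critical point'': this is false in general, since most interior points are not critical preimages. You then assert that some ray landing at $z$ has ``its outer tail past $z$'' coinciding with that of $\Gamma(\s,\ast)$: but a ray \emph{landing} at $z$ has no tail past $z$, so this sentence does not parse into a correct statement about the overlap structure of Proposition~\ref{prop_overlapping_Gamma}.

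The clean fix is to realise that the case $t^*>t_\s$ is \emph{vacuous}. If it occurred, $z$ would be an interior point of $\Gamma(\s,\ast)$, hence $z\in I(f)$, and by \eqref{eq_cardinalAddr} the fibre $\phi^{-1}(z)$ has exactly $\prod_j\deg(f,f^j(z))$ elements; but Proposition~\ref{prop_overlapping_Gamma} accounts for all of these as \emph{interior} points of canonical rays, leaving no room for a model endpoint in $\phi^{-1}(z)$. Since $z\in E(f)$ means some $(e_{\s_k},\ast_k)\in\phi^{-1}(z)$, this is a contradiction. In other words, for $z\in E(f)$ the fibre $\phi^{-1}(z)$ consists entirely of model endpoints, which is precisely the fact the paper uses (tersely) at the outset and which collapses your case analysis. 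Once you know this, your argument goes through with $t^*=t_\s$ always.
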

\begin{proof} Let $\phi$ be the function from \eqref{eq_phiCB}. Since $\phi$ is continuous, $\phi^{-1}(z)$ is a closed set of $J(g)_\pm$, being each connected component of $\phi^{-1}(z)$ the endpoint $(e_\s, \ast)$ of a set $J_{(s, \ast)}\subset J(g)_\pm$. Let $J(g)_\pm \cup\{\widetilde{\infty}\}$ be the one-point compactification of $J(g)_\pm$; see \cite[Lemma~5.8]{mio_splitting}. Then, the set
	$\mathcal{P}\defeq \{J_{(\s, \ast)} \colon (e_\s, \ast)\in \phi^{-1}(z) \}\cup \{\widetilde{\infty}\}$
	is compact. We can extend $\phi$ to a continuous map $\phi': J(g)_{\pm} \cup \lbrace \widetilde{\infty} \rbrace \rightarrow J(f) \cup \lbrace \infty \rbrace$ by defining $\phi'(\widetilde{\infty})=\infty$, see the proof of \cite[Theorem 6.5]{mio_splitting}. By continuity of $\phi'$, we have that $\phi'\left(\mathcal{P} \right)$ is compact. By definition of $\phi'$, it must be the case that $\phi'(\mathcal{P}\setminus \{\widetilde{\infty}\})=\phi(\mathcal{P}\setminus \{\widetilde{\infty}\})$, and by removing $\lbrace\infty\rbrace$ from the codomain of $\phi'$, we can conclude that $\phi(\mathcal{P}\setminus \{\widetilde{\infty}\})=\overline{\mathcal{R}}(z)$ is (relatively) closed in $J(f)$ with respect to the original topologies.
\end{proof}

\begin{discussion}[Definition of the set $K_J$] Denote
	\begin{equation}
	K_J\defeq \bigcup_{z\in P_J \cap E(f)} \overline{\mathcal{R}}(z), 
	\end{equation}
	and observe that by Proposition \ref{prop_endpoints} and since $P_J$ is discrete, $K_J$ is closed. 
\end{discussion}

In addition, we wish to include in $K$ the compact set $P_F$. Note that $\C \setminus P_F$ is open but not necessarily simply-connected. The idea is to remove a full set $K_F$ such that $F(f) \supset K_F \supset P_F$, together with a collection of curves that connect each connected component of $K_F$ to infinity. A piece of any such curve will be a dynamic ray, and the other piece will be a preperiodic simple curve inside an attracting basin of $F(f)$. For each attracting periodic point $z_0 \in F(f)$, $\mathcal{A}^\ast(z_0)$ denotes the immediate attracting basin of $z_0$. 
\begin{defn}[Attracting periodic rays~{\cite[Definition 5.2]{helena_thesis}}] Let $f$ be a transcendental entire function and let $z_0$ be an attracting periodic point of $f$ of period $n$. A simple curve $\alpha \colon (0,\infty) \rightarrow \mathcal{A}^\ast(z_0)$ is called an \textit{attracting periodic ray} of $f$ at $z_0$ (of period $n$) if
	\begin{itemize}
		\item[(i)] $f^n(\alpha(t)) = \alpha(2t)$,
		\item[(ii)] $\lim_{t\rightarrow \infty} \alpha(t) = z_0,$
		\item[(iii)] $\lim_{t\rightarrow 0} \alpha(t) = w$, where $w \in \partial \mathcal{A}^\ast(z_0)$ is a periodic point of $f$ of period $d\vert n$.
	\end{itemize}
\end{defn}
\begin{observation}[Images of attracting rays are attracting rays]\label{obs_attrray}
	If $\alpha$ is an attracting periodic ray of $f$ at $z_0$, then $f(\alpha)$ is an attracting periodic ray of $f$ at $f(z_0)$. Furthermore, if $f$ is strongly postcritically separated, by \cite[Lemma 2.6]{mio_orbifolds}, $w=\lim_{t\rightarrow 0} \alpha(t)$ must be a repelling periodic point of $f$.
\end{observation}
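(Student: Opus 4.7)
The plan is to set $\beta \defeq f \circ \alpha$ and verify the three defining conditions of an attracting periodic ray of $f$ at $f(z_0)$ (of period $n$). The functional equation $f^n \circ \alpha = \alpha \circ (t\mapsto 2t)$ interacts well with post-composition by $f$, so everything should transport cleanly.

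First I would establish that $\beta$ is a simple curve contained in $\mathcal{A}^\ast(f(z_0))$. Containment holds because $\alpha$ maps into $\mathcal{A}^\ast(z_0)$ and the immediate basin of $z_0$ is sent by $f$ into the immediate basin of $f(z_0)$. For injectivity, the key observation is that if $\beta(t_1)=\beta(t_2)$, then applying $f^{n-1}$ and using condition (i) for $\alpha$ yields $\alpha(2t_1)=\alpha(2t_2)$; since $\alpha$ is simple, $t_1=t_2$. This step is really the only subtle point — everywhere else I can just push limits through the continuous function $f$.

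Next I would verify (i)--(iii) by direct computation: (i) $f^n(\beta(t))=f(f^n(\alpha(t)))=f(\alpha(2t))=\beta(2t)$; (ii) $\lim_{t\to\infty}\beta(t)=f(z_0)$ by continuity of $f$; and (iii) $\lim_{t\to 0}\beta(t)=f(w)$, which lies on $\partial \mathcal{A}^\ast(f(z_0))$ and is periodic. Its period $d'$ divides the period $d$ of $w$, and $d\mid n$, hence $d'\mid n$, as required.

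For the ``Furthermore'' clause, the point $w\in \partial \mathcal{A}^\ast(z_0)\subset J(f)$ is periodic, so by \cite[Lemma 2.6]{mio_orbifolds} — which rules out non-repelling periodic cycles in the Julia set for strongly postcritically separated maps in $\mathcal{B}$ — $w$ must be repelling. I anticipate no real obstacle here; the one genuinely non-trivial point in the whole observation is the simplicity of $\beta$ dealt with above.
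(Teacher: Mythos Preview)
Your proposal is correct. The paper states this as an observation without proof, so your verification is more detailed than what the paper provides; the direct check of conditions (i)--(iii) together with the injectivity argument via $f^{n-1}$ is exactly the natural justification one would give.
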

The next proposition tells us that we can find at least one attracting periodic ray for every attracting periodic point that also contains a prescribed point belonging to its immediate basin of attraction. This result is a version of \cite[Proposition 5.3]{helena_thesis} stated for our class of maps. Since the proof is exactly the same as for geometrically finite maps, we omit it.
\begin{prop}[Attracting rays with prescribed points]\label{prop_attr_ray}
Let $f\in \B$ be strongly postcritically separated and let $z_0$ be an attracting periodic point of $f$. Then, for any point $\xi$ that belongs to the unbounded component of $\mathcal{A}^\ast(z_0) \setminus P(f)$, there exists an attracting periodic ray of $f$ at $z_0$ in $\mathcal{A}^\ast(z_0) \setminus P(f) $ that contains $\xi$.
\end{prop}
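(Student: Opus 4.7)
The plan is to construct $\alpha$ by first defining a short germ near $z_0$ via linearisation, and then extending it backwards through preimages of $g \defeq f^n$ until it lands at a repelling periodic point on $\partial \mathcal{A}^\ast(z_0)$. Since $z_0$ is an attracting fixed point of $g$ with multiplier $\lambda$, $|\lambda|<1$, Koenigs' theorem (if $\lambda \neq 0$) or Böttcher's theorem (if $\lambda = 0$) provides a conformal isomorphism $\varphi\colon U \to \D_r$ on a neighbourhood $U$ of $z_0$, conjugating $g|_U$ to $w\mapsto \lambda w$, respectively $w\mapsto w^k$ for some $k\geq 2$. Shrinking $U$, we may assume $U\cap P(f) = \{z_0\}$.

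Since $\xi$ lies in the unbounded component $V$ of $\mathcal{A}^\ast(z_0)\setminus P(f)$ and $g^j(\xi)\to z_0$, there exists a minimal $m\geq 0$ with $g^m(\xi)\in U$. Taking $\theta \defeq \arg \varphi(g^m(\xi))$, the $\varphi^{-1}$-image of the radial segment $\{te^{i\theta}\colon 0<t<\rho\}$ is a germ $\alpha_0\subset U\setminus\{z_0\}$ passing through $g^m(\xi)$, accumulating at $z_0$, lying in $\mathcal{A}^\ast(z_0)\setminus P(f)$, and satisfying $g(\alpha_0)\subsetneq \alpha_0$ after reparameterisation. One then pulls $\alpha_0$ back $m$ times, selecting at the $j$-th step the inverse branch of $g$ that sends $g^{m-j+1}(\xi)$ to $g^{m-j}(\xi)$. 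Perturbing $\theta$ slightly if necessary so that the forward $g$-orbit of $\alpha_0$ avoids the discrete set $P_J\setminus\{z_0\}$, each required inverse branch is well-defined along the relevant piece of curve, and the procedure yields a simple curve $\alpha_m\subset \mathcal{A}^\ast(z_0)\setminus P(f)$ containing $\xi$.

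To obtain a full attracting periodic ray, I would continue the pullback procedure indefinitely, following a consistent choice of backward orbit of $g$ converging to $\partial \mathcal{A}^\ast(z_0)$. The key input is that, since $f$ is strongly postcritically separated, by \cite[Theorem~1.1]{mio_orbifolds} it expands an orbifold metric on a neighbourhood of $J(f)$ contained in $\C\setminus P(f)$. Consequently, the relevant inverse branches of $g$ contract this orbifold metric uniformly along the backward orbit, so that the orbifold diameters of successive pullback pieces form a geometrically convergent series. This forces the extended curve to land at a single point $w\in \partial \mathcal{A}^\ast(z_0)$ rather than accumulate on a proper continuum. By the $g$-equivariance of the construction, $w$ is a periodic point of $f$ of period dividing $n$, and by Observation~\ref{obs_attrray} it is repelling.

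The main obstacle is precisely this landing step: a priori, iterated pullbacks yield only a nested sequence of simple arcs, and it is not automatic that they converge to a single point on $\partial \mathcal{A}^\ast(z_0)$. The hypothesis that $f\in \CB$ is strongly postcritically separated is exactly what supplies the orbifold expansion needed for this convergence, and is the reason the argument goes through in our setting just as in the geometrically finite case. Simplicity of $\alpha$ and disjointness from $P(f)$ follow from the injectivity of each inverse branch together with the generic choice of $\theta$ ensuring that the forward $g$-orbit of $\alpha\cap U$ misses $P(f)\setminus\{z_0\}$.
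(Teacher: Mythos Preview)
The paper does not actually prove this proposition: it cites \cite[Proposition~5.3]{helena_thesis} and says the argument for geometrically finite maps goes through unchanged. Your outline follows the expected shape of that argument --- linearise at $z_0$, build a forward-invariant germ, extend by successive pullbacks through the orbit of $\xi$, and prove landing on $\partial\mathcal{A}^\ast(z_0)$ by a contraction estimate --- and you correctly identify that the orbifold expansion of \cite{mio_orbifolds} is what replaces the hyperbolic contraction used in the geometrically finite setting.

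There is, however, a concrete error in your germ construction. In Koenigs coordinates $g$ acts as $w\mapsto\lambda w$, and the image of the radial segment $\{te^{i\theta}:0<t<\rho\}$ is the radial segment at angle $\theta+\arg\lambda$; unless $\lambda>0$ these are disjoint, so $g(\alpha_0)\subsetneq\alpha_0$ fails, and no reparametrisation rescues it. (The same issue arises in B\"ottcher coordinates, where $w\mapsto w^k$ multiplies the angle by $k$.) The standard fix is to take instead the invariant logarithmic spiral $\{\lambda^{s}\varphi(g^m(\xi)):s\geq 0\}$ through $\varphi(g^m(\xi))$, which genuinely satisfies $g(\alpha_0)\subset\alpha_0$ and still passes through $g^m(\xi)$. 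Beyond this, your landing step is somewhat informal: you should make explicit that after reaching $\xi$ the successive pullbacks are chosen so that $\alpha_{j+1}\supset\alpha_j$ (using forward-invariance of the germ), and note that the orbifold metric of \cite{mio_orbifolds} is only defined away from a neighbourhood of $P_F$, so one must first observe that after finitely many pullbacks the new pieces lie in the region of uniform expansion before the geometric-series bound applies.
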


Our next goal is to define $K_F$ by enclosing $P_F$ with a finite number of connected sets. Each of them will consist of a bounded domain in $F(f)$, together with (a preimage of) an attracting periodic ray, that either has an endpoint at infinity or at a repelling periodic point $p$. In the latter case, we will include $\overline{\mathcal{R}}(p)$ in $K_F$. We now formalize these ideas following the approach in \cite[Definition and Proposition 5.4]{helena_thesis}:
\begin{discussion}[Definition of the set $K_F$]\label{dis_KF}
	Since $P_F$ is compact, there exists a finite collection $\{A_i\}^n_{i=1}$ of connected components of $F(f)$ such that $\bigcup^n_{i=1}A_i \supset P_F$. We can assume that $\{A_i\}^n_{i=1}$ is minimal in the sense that $P_F \cap A_i \neq \emptyset$ for all $1\leq i\leq n$. We moreover can assume without loss of generality that $f$ has only one attracting cycle, since otherwise the same argument applies to each cycle. Let $\{z_1,\ldots, z_m\}$ be this attracting cycle and let $\{A_i\}^m_{i=1}$ be the corresponding immediate basins, labelled so that $A_i \ni z_i$ for all $1\leq i\leq m$, and so that $f(A_j)\subset A_{j-1}$ for all $2\leq j\leq n$. Let us pick any point $z^\ast \in A_n$ so that $f^j(z^\ast)$ does not belong to $P(f)$ for every $0\leq j\leq n-m$. We can find a collection of $n$ bounded Jordan domains $\{J_i\}^n_{i=1}$ such that $J_i \subset A_i$, $J \defeq \bigcup^n_{i=1} J_i \supset (P_F \cup \Orb^+(z^\ast))$ and $f(J)\Subset J \Subset F(f)$, see \cite[Proposition 3.1]{helena_thesis}. Let $\xi \defeq f^{n-m}(z^\ast) \in J_m \setminus P_F$. By Proposition \ref{prop_attr_ray}, there exists an attracting periodic ray $\alpha_m$ at $z_m \in J_m$ that contains $\xi$. For each $1 \leq i < m$, we denote the iterated forward image of $\alpha_m$ by $\alpha_i \defeq f^{m-i}(\alpha_m)$, which by Observation \ref{obs_attrray} is an attracting periodic ray at $z_i$. Note that it might occur that $\lim_{t \rightarrow 0} \alpha_i(t) = \lim_{t \rightarrow 0} \alpha_k(t)$ for some $i \neq k$.
	
	Let $\widetilde{\alpha}_m$ be the piece of $\alpha_m$ that connects $\xi$ to $\partial A_m$, let $w_m\defeq \lim_{t\rightarrow 0} \alpha_m(t)$ be the periodic endpoint of $\alpha_m$, and define the curve $\alpha_{m+1}$ as the connected component of $f^{-1}(\widetilde{\alpha}_m)$ belonging to $A_{m+1}$ that contains the point $f^{n-m-1}(z^\ast)$. By Proposition \ref{prop_attr_ray}, since $\alpha_m\cap P(f)=\emptyset$, this curve is unique and well-defined, and moreover, $ \lim_{t \rightarrow 0} \alpha_{m+1}(t)$ is either $\infty$ when $w_m$ is an asymptotic value, or it is a point $w_{m+1}$ such that $f(w_{m+1})=w_m$. Similarly and proceeding recursively, we define for each $m+2\leq j \leq n$ the curve $\alpha_j$ as the connected component of $f^{-1}(\alpha_{j-1}^\ast)$ belonging to $A_{j}$ that contains $f^{n-j}(z^\ast)$, and that in particular has analogous properties to those of the curve $\alpha_{m+1}$. That is, either $ \lim_{t \rightarrow 0} \alpha_{j}(t)$ is infinity, or it is a point $w_{j}$ such that $f(w_{j})=w_{j-1}$. Note that for $m+1\leq j\leq n$, the union $J_j \cup \alpha_j$ is a connected set, since $f^{n-j}(z^\ast) \in \alpha_j \cap J_j$.
	
	For every $1\leq i \leq n$ we define $\tilde{K}_i \defeq J_i \cup\alpha_i$, which by construction is closed, connected, $f(\tilde{K}_i ) \subset\tilde{K}_{i-1}$ for all $i \geq 2$ and $f(\tilde{K}_1) \subset \tilde{K}_m.$ Note that the sets $\{\tilde{K}_i\}_i$ are not necessarily simply-connected, as the curve $\alpha_i$ might intersect $\partial J_i$ more than twice. Thus, we define $\tilde{K}$ as the fill-in of $\bigcup_i\tilde{K}_i$; that is, $\tilde{K}$ equals $\bigcup_i\tilde{K}_i$ together with all bounded components of $\C \setminus \bigcup_i\tilde{K}_i $. Then, $\tilde{K}$ is a closed, connected and simply-connected set that by the Open Mapping Theorem satisfies $f(\tilde{K}) \subset \tilde{K}$. Moreover, $\tilde{K}$ consists of finitely many connected components, and $(\C \setminus \tilde{K}) \cap P_F =\emptyset$.
	
	By construction and Observation \ref{obs_attrray}, each connected component of $\tilde{K}$ that intersects the attracting cycle $\{z_1,\ldots, z_m\}$ contains exactly one periodic point in its boundary. Namely, the (non-separating) endpoint of an attracting periodic ray, and in particular belongs to $J(f)$. Let us label the distinct points that arise as a finite limit $\lim_{t \rightarrow \infty} \alpha_i(t)$
	for some $1\leq i \leq n$ as $\{w_1, \ldots,w_l\}\eqdef W$, noting that it might occur that $l < n$. Every $w_i\in W$ is a (pre)periodic point in $J(f).$ Let $V$ be the minimal set that contains $W$ and satisfies $f(V) \subset V$, i.e., $V$ is the set of forward images of the points in $W$.
	We define
	$$ K_F\defeq \bigcup_{w\in V} \overline{\mathcal{R}}(w)\cup \tilde{K},$$ 
	and note that by Proposition \ref{prop_endpoints} and since $\tilde{K}$ is closed, $K_F$ is also closed. Observe also that each of the connected components of $\C \setminus K_F$ is a simply-connected domain. 
\end{discussion}
Finally, we define 
\begin{equation}
K \defeq K_J \cup K_F,
\end{equation}
which is closed as it is the union of two closed sets. Note that the sets $K_J$ and $K_F$ might share some (piece of) periodic ray in $\overline{\mathcal{R}}(w)$ for some $w\in V$. By construction, the set $K$ is forward-invariant, that is,
\begin{equation}
f(K)\subseteq K.
\end{equation}
Moreover, $\C \setminus K \cap P(f)~=~\emptyset$ and each connected component of $\C \setminus K$ is simply-connected, since otherwise $K$ would enclose a domain that escapes uniformly to infinity, contradicting that $\text{int}(I(f))=\emptyset$ as $f\in \mathcal{B}$, \cite{eremenkoclassB}. Thus, since $f$ is an open map, all connected components of $f^{-1}(\C\setminus K)$ are simply-connected, which we label as $U_0, U_1,\ldots$. We denote by $\mathcal{U}$ the set of all those components. With slight abuse of notation, $\mathcal{U}$ will sometimes also denote its union. 

In the next proposition we assign to each of the landing canonical rays $\overline{\Gamma(\s, \ast)}$ a unique component of $\mathcal{U}$; compare to Proposition \ref{prop_itin_cosh}.
\begin{prop}[Each canonical ray is in the closure of a unique $U\in \mathcal{U}$]\label{prop_U2} For each $(\s, \ast)\in \Addr(f)_\pm$,  either $\overline{\Gamma(\s, \ast)}$ is totally contained in $K$, or there exists a unique component  $U\in \mathcal{U}$ such that $\overline{\Gamma(\s, \ast)} \subset \cl{U}$. In the latter case we denote 
	$$U (\s, \ast)\defeq U.$$		
\end{prop}

\begin{proof}
	Let $\gamma$ be an unbounded subcurve of $\overline{\Gamma(\s, \ast)}$. Then, by construction, if $\gamma$ belongs to a connected component of $K$, then $\overline{\Gamma(\s, \ast)}$ belongs to $K$. Otherwise, $\gamma\subset U$ for some $U \in \mathcal{U}$ and $\overline{\Gamma(\s, \ast)} \subset \overline{U}$. This can be seen either using the definition of canonical rays as nested sequences of left or right extensions, see \cite[Theorem 3.8]{mio_signed_addr}. Alternatively, it is easy to check the analogous propert for the model space $J(g)_\pm$, and it can be transferred using the continuous map $\phi$ from \eqref{eq_phiCB}, that preserves the orders of $\Addr(g)_\pm$ and $\Addr(f)_\pm$, see \cite[Observation 6.6]{mio_splitting}.
\end{proof}

\begin{defn}[Itineraries for canonical rays]
	For each $(\s, \ast)\in \Addr(f)_\pm$ we define the \textit{itinerary} of $(\s, \ast)$ as the sequence
	$$\itin(\s, \ast)\defeq U(\s, \ast) U(\sigma(\s), \ast)U(\sigma^2(\s), \ast)\ldots, $$
	whenever it is defined. 
	We say that an itinerary is \textit{bounded} if only finitely many different elements of $\mathcal{U}$ occur in it.
\end{defn}

\begin{observation}[Itineraries of points]\label{obs_itin_sps} Since by Proposition \ref{prop_U2}, for each $(\s, \ast) \in \Addr(f)_\pm$, $f(\overline{\Gamma(\s, \ast)})\subset \overline{\Gamma}(\sigma(\s), \ast)$, if $z\in \overline{\Gamma(\s, \ast)}$ and $\itin(\s, \ast)=U_0U_1\ldots$, then $f^i(z)\subset U_i$ for all $i\geq 0$. 
\end{observation}

\begin{prop}\label{prop_bounded} If the itinerary of any $(\ul{\alpha}, \ast)\in \Addr(f)_\pm$ is bounded, then 
	the endpoint $z$ of $\Gamma(\ul{\alpha}, \ast)$ has bounded orbit, i.e., $\sup_{j\geq 0} \vert f^j(z)\vert <\infty.$ 
\end{prop}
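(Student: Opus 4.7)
The plan is to: (i) translate boundedness of the itinerary into a bounded-alphabet condition on $\ul{\alpha}$; (ii) lift to the disjoint-type model via the semi-conjugacy of Definition and Theorem~\ref{defnprop_spsinCB}; (iii) use the tip/tract recursion in the model to bound the orbit.

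Write $\itin(\ul{\alpha}, \ast) = U_0 U_1 U_2 \ldots$ with values in a finite collection $\{V_1, \ldots, V_N\} \subset \mathcal{U}$, and let $z$ be the endpoint of $\Gamma(\ul{\alpha}, \ast)$. By Observation~\ref{obs_itin_sps}, $\{f^i(z)\}_{i \geq 0} \subset \bigcup_{j=1}^N \overline{V_j}$. The combinatorial heart of the argument is the claim
\begin{equation*}
(\star)\quad \text{for each }V\in\mathcal{U},\ \mathcal{S}_V\defeq \{\tau_0 : (\ultau,\star)\in\Addr(f)_\pm,\ \overline{\Gamma}(\ultau,\star)\subset\overline{V}\}\ \text{is finite.}
\end{equation*}
Granted $(\star)$, set $\mathcal{S}\defeq\bigcup_j\mathcal{S}_{V_j}$, a finite alphabet of fundamental domains. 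For every $i\geq 0$, since $\Gamma(\sigma^i(\ul{\alpha}),\ast)\subset\overline{U_i}\subset \bigcup_j \overline{V_j}$, the first symbol $s_i$ of $\sigma^i(\ul{\alpha})$ must lie in~$\mathcal{S}$; so every symbol of $\ul{\alpha}$ is taken from the finite set~$\mathcal{S}$.

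Next, fix $\lambda\in\C^\ast$ so that $g\defeq\lambda f$ is of disjoint type, and apply Definition and Theorem~\ref{defnprop_spsinCB} to obtain the semi-conjugacy $\phi\colon J(g)_\pm\to J(f)$ and the model map $\tilde g\colon J(g)_\pm\to J(g)_\pm$. Let $e\in J(g)_\pm$ be the endpoint of the hair $J_{(\ul{\alpha},\ast)}$, so that $z=\phi(e)$ and $f^i(z)=\phi(\tilde g^i(e))$ for every $i\geq 0$. By the tract/Markov structure of the disjoint-type map $g$ (the general-$\CB$ analog of Definition~\ref{defn_modelcosine}), the tip value of a hair with address $\s$ satisfies a recursion of the form $t_\s=\log(t_{\sigma(\s)}+1+c|s_1|)$. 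Setting $M\defeq \max\{|s|:s\in\mathcal{S}\}$, the monotone concave map $\phi_M(x)\defeq\log(x+1+cM)$ admits a largest fixed point $\alpha_M<\infty$, and a direct monotonicity argument shows that every address with symbols in $\mathcal{S}$ satisfies $t_\s\leq\alpha_M$; this applies in particular to each shift $\sigma^i(\ul{\alpha})$. Hence the orbit $\{\tilde g^i(e)\}_{i\geq 0}$ is confined to a compact subset of $J(g)_\pm$, and continuity of $\phi$ forces $\{f^i(z)\}_{i\geq 0}$ to be compact, hence bounded, in $J(f)$.

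\paragraph{Main obstacle.} The crux is~$(\star)$: showing that every component $V$ of $f^{-1}(\C\setminus K)$ meets only finitely many fundamental domains of $f$. This requires that the set $K=K_J\cup K_F$ constructed in Section~\ref{sec_landing}—combining all closures $\overline{\mathcal{R}}(p)$ for $p\in P_J\cap E(f)$ with the preperiodic rays surrounding $P_F$—be \emph{dense enough} across the tracts of $f$ to preclude any component of $\C\setminus K$ from spanning infinitely many tracts. The sps hypothesis~\eqref{eq_epsilon} supplies that $P_J$ is combinatorially spread across the plane, and Proposition~\ref{prop_endpoints} ensures closedness of each $\overline{\mathcal{R}}(p)$; together they provide the barrier network on which $(\star)$ should rest. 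Making this rigorous will likely proceed by enumerating, inside each tract, pairs of consecutive canonical rays landing at postsingular points and showing that they carve the tract into cells, each contained in a single fundamental domain (as verified by hand in the $\cosh$ and $\cosh^2$ calculations of Section~\ref{sec_cosh_indent}).
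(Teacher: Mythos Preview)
Your three-step skeleton---bounded itinerary $\Rightarrow$ bounded address $\Rightarrow$ bounded orbit in the disjoint-type model $\Rightarrow$ bounded orbit for $f$---is exactly the paper's route. The differences lie in how the two hard steps are executed.

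\textbf{On $(\star)$.} You correctly isolate this as the crux, but your proposed mechanism (density of $K$ forced by the $\epsilon$-separation of $P_J$) is not what the paper uses, and it is not clear it would work: the spreading condition controls distances between postsingular points, not the combinatorics of which tracts their rays enter. The paper's argument is shorter and more structural. Since $f\in\CB$ is sps, $S(f)\cap J(f)$ is a closed, bounded, discrete set, hence finite; in particular $S(f)\cap I(f)$ is finite. The boundary of each $U\in\mathcal{U}$ is made of (preimages of) canonical rays, and the only way $\partial U$ can have a connected component that separates the plane---and hence the only way $U$ can have several accesses to infinity---is through a component containing an escaping (pre)critical point, where rays split. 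Finiteness of $S(f)\cap I(f)$ bounds the number of such components, and since $U$ cannot reach infinity through the gap between tracts, $U$ meets only finitely many fundamental domains in an unbounded piece. This is the missing idea.

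\textbf{On step (iii).} The recursion $t_\s=\log(t_{\sigma(\s)}+1+c|s_1|)$ is specific to the cosine/exponential model of Definition~\ref{defn_modelcosine}; there is no such formula for a general disjoint-type $g\in\CB$, so your fixed-point argument does not apply as written. The paper simply quotes the general fact for disjoint-type maps (bounded address $\Rightarrow$ bounded endpoint orbit, \cite[Proposition~3.10]{lasse_arclike}) and then transfers to $f$ using the uniform bound $|\phi(e_{\ul\alpha},\ast)-e_{\ul\alpha}|\le M$ from the construction of $\phi$, rather than an abstract compactness/continuity argument. Your continuity argument would also work once the orbit of $e$ is known to be bounded, but you should not rely on the explicit recursion to get there.
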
	
\begin{proof}
	First we claim that each itinerary component $U\in \mathcal{U}$ only intersects finitely many fundamental domains in an unbounded component. The reason being that  $S(f)\cap I(f)$ is finite as $f\in \CB$ is sps, and so $\partial U$ only contains finitely many connected components that separate the plane, namely, those containing an escaping critical point. Each of these components contains two ray tails that are totally contained in two different fundamental domains. Using this and that $U$ cannot contain accesses to infinity between tracts, it is easy to see that the claim holds. Hence, we have that if $\itin(\ul{\alpha}, \ast)$ is bounded, $\ul{\alpha}$ must also be \textit{bounded}, that is, only finitely many different fundamental domains occur in $\ul{\alpha}$. 
	
	Recall that the map $\phi$ from \eqref{eq_phiCB} establishes a one-to-one correspondence between $\Addr(g)_\pm$ and $\Addr(f)_\pm$; namely $\phi(J_{(\ul{\alpha}, \ast)})=\overline{\Gamma}(\ul{\alpha}, \ast)$. Since $g$ is of disjoint type, if $\ul{\alpha}$ is bounded, then the endpoint $e_{\ul{\alpha}}$ of $J_{\ul{\alpha}}$ has bounded orbit under the map $g$, see \cite[Proposition 3.10]{lasse_arclike}. In addition, see the proof of \cite[Theorem 6.5]{mio_splitting}, there exists a constant $M$, independent of $\ul{\alpha}$, such that  $\vert \phi(e_{\ul{\alpha}}, \ast) - e_{\ul{\alpha}} \vert \leq  M$. This together with \eqref{eq_com_CB} implies that the endpoint $\phi(e_{\ul{\alpha}}, \ast)$ of $\Gamma(\ul{\alpha}, \ast)$ also has bounded orbit.
\end{proof}

The following is the main result of this section. 
\begin{thm}[Criterion for rays landing together]\label{thm_landing_intro} Let $f\in \CB$ be strongly postcritically separated. Then two canonical rays with bounded itinerary land together if and only if they have the same itinerary.
\end{thm}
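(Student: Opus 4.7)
My plan is to establish the two directions of the biconditional separately, relying on the dynamical partition used to define itineraries and on the hyperbolic expansion furnished by the sps condition. Throughout, write $g_1, g_2$ for the two canonical rays under consideration and $z_1, z_2$ for their respective landing points, so that the claim is $z_1 = z_2$ if and only if $\itin(g_1) = \itin(g_2)$ whenever both itineraries are bounded.

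For the \emph{if} direction, assume $\itin(g_1) = \itin(g_2) = (s_0, s_1, \ldots)$. Then for every $n \geq 0$ the images $f^n(g_1)$ and $f^n(g_2)$ are contained in the same partition piece $T_{s_n}$ and land at $f^n(z_1)$ and $f^n(z_2)$, both of which lie in the closure of $T_{s_n}$. Boundedness of the itinerary ensures that the orbits $(f^n(z_i))_{n \geq 0}$ remain in a fixed compact subset of $\C \setminus P(f)$, on which one has access to a natural hyperbolic metric. Since $f\in\CB$ is strongly postcritically separated, the iterates of $f$ act as uniform expansions on this set in the appropriate hyperbolic metric. Pulling back along the common itinerary, the hyperbolic distance between $z_1$ and $z_2$ must therefore be smaller than any fixed constant, forcing $z_1 = z_2$.

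For the \emph{only if} direction, suppose towards a contradiction that $z_1 = z_2$ but the itineraries differ at some first index $n_0$. Replacing the two rays by $f^{n_0}(g_1)$ and $f^{n_0}(g_2)$, which still land at the common point $f^{n_0}(z_1)$, I may assume $s_0^1 \neq s_0^2$, so that $g_1 \subset T_{s_0^1}$ and $g_2 \subset T_{s_0^2}$ lie in distinct pieces of the partition. The common landing point must then belong to the intersection $\partial T_{s_0^1} \cap \partial T_{s_0^2}$. However, the dynamical partition for an sps map in $\CB$ is constructed so that the boundaries of its pieces, together with their iterated preimages, are disjoint from landing points of canonical rays with bounded itinerary; this is precisely where the sps hypothesis, via the uniform separation of postsingular points in $J(f)$, is used. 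This contradicts $z_1 = z_2 \in J(f)$ lying on such a boundary, and finishes the proof.

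The main technical obstacle is the \emph{if} direction, because the partition pieces $T_s$ for a general $\CB$ map are unbounded, so the diameters of the relevant sets cannot be controlled in the Euclidean metric. The argument must instead quantify expansion in a hyperbolic metric on a suitable $f$-invariant domain disjoint from $P(f)$, and one needs the derivative bound to be uniform over orbits prescribed by any bounded itinerary. Once this uniform expansion is in place, the iterated pullback and the separation property of the partition handle both implications, and the \emph{only if} direction is essentially combinatorial.
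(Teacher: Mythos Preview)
Your overall strategy matches the paper's: the ``only if'' direction is essentially combinatorial (a common landing point forces the itineraries to agree), and the ``if'' direction is a contraction argument using the hyperbolic expansion furnished by the sps hypothesis. However, two points in your ``if'' direction are genuine gaps rather than routine details.

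First, you assert that bounded itinerary forces the orbits $(f^n(z_i))$ to stay in a fixed compact subset of $\C\setminus P(f)$. That the orbits are \emph{bounded} is not immediate: the paper proves it separately (Proposition~\ref{prop_bounded}) by showing that a bounded itinerary forces a bounded external address, and then transferring the corresponding fact from the disjoint-type model via the semiconjugacy $\phi$. Moreover, since $P(f)$ may be infinite for sps maps (escaping critical orbits), the hyperbolic metric on $\C\setminus P(f)$ is not the right tool; the paper works instead with the orbifold metric $\rho_{\Or}$ from \cite{mio_orbifolds}, for which the uniform expansion $\Vert Df\Vert_{\Or}\geq\Lambda>1$ is established.

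Second, your pullback step (``the hyperbolic distance between $z_1$ and $z_2$ must be smaller than any fixed constant'') presupposes that for each $n$ one can connect $f^n(z_1)$ and $f^n(z_2)$ inside the partition piece by a curve of \emph{uniformly bounded} orbifold length, which can then be pulled back along the common itinerary. Since the partition pieces are unbounded and may have complicated geometry near $P(f)$, this is not free. The paper handles it by first confining the two orbits to bounded simply-connected subdomains $W_i\subset V_i$ (using the bounded-orbit result above), and then invoking \cite[Theorem~7.6]{mio_orbifolds} to replace an arbitrary connecting curve by one homotopic rel $P(f)$ of length at most a constant $\mu_i$; only then does the contraction $d_{\Or}(p_0,q_0)\leq \mu/\Lambda^n\to 0$ go through. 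Your sketch does not supply either ingredient.

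For the ``only if'' direction, your argument is morally correct but the justification you give (``uniform separation of postsingular points'') is not the mechanism. What is actually used is that the boundaries of the partition pieces lie in $K\cup f^{-1}(K)$, which by construction consists of (pieces of) canonical rays and Fatou-set curves; a ray whose itinerary is well-defined therefore has its entire closure, including its endpoint, contained in a single $\overline U$, and the endpoint in fact lies in the interior $U$. This is Proposition~\ref{prop_U2} together with Observation~\ref{obs_itin_sps} in the paper.
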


\begin{proof} If two canonical rays land together, then by Proposition \ref{prop_U2} and Observation \ref{obs_itin_sps}, they must have the same itinerary. For the other implication, let $\Gamma(\s, \ast)$ and $\Gamma(\ultau, \star)$ be two different canonical rays, that is, $(\s, \ast)\neq (\ultau, \star)$, and let $p_0\defeq p_{(\s, \ast)}$ and $q_0\defeq q_{(\ultau, \star)}$ be their respective endpoints. Moreover, for each $n\geq 0$, denote $p_n\defeq f^n(p_0)$ and $q_n\defeq f^n(q_0)$. 
	By assumption, $\itin(\s, \ast)=\itin(\ultau, \ast)=U_0U_1U_2\ldots$ is bounded, that is, there exists a finite collection $\mathcal{V}\defeq\{V_i\}_{i\in I}$ of domains in $\mathcal{U}$ such that $U_n\in \mathcal{V}$ for all $n\geq 0$. In particular, by Observation \ref{obs_itin_sps}, $q_n,p_n\in U_n$ for all $n\geq 0$. We want to show that $p_0=q_0$.
	
	By Proposition \ref{prop_bounded}, both $p_0$ and $q_0$ have bounded orbits, and hence, for each $V_i\in \V$, we can find a bounded simply-connected domain $W_i\in V_i$ such that $$(\Orb^+(p_0)\cup \Orb^+(q_0))\cap V_i \subset W_i.$$
	Moreover, since all domains in $\mathcal{U}$ have a locally connected boundary, $W_i$ can be chosen so that $\partial W_i$ is locally connected.
	
	Since $f\in \B$ and sps, \cite[Theorem 1.1]{mio_orbifolds} states that there exist hyperbolic orbifolds $\Ort=(\widetilde{S},\tilde{\nu})$ and $\Or=(S,\nu)$ such that $\tilde{S}\subset S\subseteq \C$, $f \colon\Ort\to\Or$ is an orbifold covering map, and there exists a constant $\Lambda > 1$ such that $\Vert D f(z)\Vert_{\Or}\defeq(\vert f'(z)\vert\rho_{\Or}(f(z)))/\rho_{\Or}(z) \geq~\Lambda$ for all $z\in \mathcal{U}$, where $\rho_{\Or}$ denotes the density of its orbifold metric. We moreover denote by $\ell_\Or$ and $d_\Or$ the corresponding orbifold length and distance; see \cite[\S 3]{mio_orbifolds} for definitions. In particular, we have defined $\mathcal{U}$ such that $\mathcal{U} \subset \tilde{S}\cap S$, compare \ref{dis_KF} with the proof of \cite[Definition and Proposition 5.1]{mio_orbifolds}. Since the sets in $\{W_i\}_{i \in I}$ do not contain postsingular points,  \cite[Theorem 7.5]{mio_orbifolds} implies that for each $i\in I$, there exists a constant $\mu_i$ such that if $\delta_n \subset W_i$ is a curve joining $p_n$ and $q_n$ for some $n\in \N$, then there exists a curve $\gamma_n \subset W_i$ that also has endpoints $p_n$ and $q_n$, that is homotopic to $\delta_n$ with respect to $P(f)$ and so that $\ell_\Or(\gamma_n)<\mu_i$. This means that there exists an inverse branch $F$ of $f^n$ such that $\gamma^n_0\defeq F(\gamma_n)$ joins $p_0$ and $q_0$; see \cite[\S7]{mio_orbifolds} for more details. In particular, if we let $\mu\defeq \max_{i \in I} \mu_i$, then for each $n\in \N$, $d_\Or(p_0,q_0) \leq \ell_\Or(\gamma^n_0) \leq \ell_\Or(\gamma_n)/\Lambda^n\leq \mu/\Lambda^n $, which tends to $0$ as $n\rightarrow \infty$, and thus $p_0=q_0$, as we wanted to show.
\end{proof}
\bibliographystyle{alpha}
\bibliography{biblioComplex}

\begin{thebibliography}{{Par}22b}

\bibitem[AFAP90]{general_topology}
A.~V. Arkhangel’skiǐ, V.~V. {Fedorchuk (auth.)}, A.~V. Arkhangel’skii, and
  L.~S. {Pontryagin (eds.)}.
\newblock {\em General {T}opology {I}: {B}asic {C}oncepts and {C}onstructions
  {D}imension {T}heory}.
\newblock Encyclopaedia of Mathematical Sciences № 17. Springer-Verlag Berlin
  Heidelberg, 1 edition, 1990.

\bibitem[Alh19]{mashael_thesis}
M.~Alhamed.
\newblock {\em Dynamics of Parabolic Transcendental Entire Functions}.
\newblock PhD Thesis, University of Liverpool, 2019.

\bibitem[AO93]{AartsOversteegen}
J.~Aarts and L.~Oversteegen.
\newblock The geometry of {J}ulia sets.
\newblock {\em Trans. Amer. Math. Soc.}, 338(2):897--918, 1993.

\bibitem[AR17]{nada}
N.~Alhabib and L.~{Rempe-Gillen}.
\newblock Escaping endpoints explode.
\newblock {\em Comput. Methods Funct. Theory}, 17(1):65--100, 2017.

\bibitem[ARS22]{mashael}
M.~Alhamed, L.~Rempe, and D.~Sixsmith.
\newblock Geometrically finite transcendental entire functions.
\newblock {\em Journal of the London Mathematical Society}, 2022.

\bibitem[Bis15]{bishop2015}
Christopher~J. Bishop.
\newblock Constructing entire functions by quasiconformal folding.
\newblock {\em Acta Math.}, 214(1):1--60, 2015.

\bibitem[BJR12]{lasseBrushing}
K.~Bara{\'{n}}ski, X.~Jarque, and L.~Rempe.
\newblock Brushing the hairs of transcendental entire functions.
\newblock {\em Topology and its Applications}, 159(8):2102--2114, 2012.

\bibitem[Dou93]{douady_pinchedmodel}
A.~Douady.
\newblock Descriptions of compact sets in {${\bf C}$}.
\newblock In {\em Topological methods in modern mathematics ({S}tony {B}rook,
  {NY}, 1991)}, pages 429--465. Publish or Perish, Houston, TX, 1993.

\bibitem[EL92]{eremenkoclassB}
A.~{Erë}menko and M.~Lyubich.
\newblock Dynamical properties of some classes of entire functions.
\newblock {\em Ann. Inst. Fourier (Grenoble)}, 42(4):989--1020, 1992.

\bibitem[Fat26]{Fatou}
P.~Fatou.
\newblock Sur l'it\'{e}ration des fonctions transcendantes {E}nti\`eres.
\newblock {\em Acta Math.}, 47(4):337--370, 1926.

\bibitem[FM17]{nuria_david}
N.~Fagella and D.~{Martí-Pete}.
\newblock Dynamic rays of bounded-type transcendental self-maps of the
  punctured plane.
\newblock {\em Discrete \& Continuous Dynamical Systems - A}, 37:3123--3160,
  2017.

\bibitem[{Mih}09]{helena_thesis}
H.~{Mihaljevi{\'c}-Brandt}.
\newblock {\em Topological Dynamics of {T}ranscendental {E}ntire {F}unctions}.
\newblock PhD Thesis, University of Liverpool, 2009.

\bibitem[{Mih}10]{helena_landing}
H.~{Mihaljevi{\'c}-Brandt}.
\newblock A landing theorem for dynamic rays of geometrically finite entire
  functions.
\newblock {\em Journal of the London Mathematical Society}, 81(3):696--714,
  2010.

\bibitem[{Mih}12]{helenaSemi}
H.~{Mihaljevi{\'c}-Brandt}.
\newblock Semiconjugacies, pinched {C}antor bouquets and hyperbolic orbifolds.
\newblock {\em Trans. Amer. Math. Soc.}, 364(8):4053--4083, 2012.

\bibitem[MPS20]{david_Shishikura_Wandering}
D.~Martí-Pete and M.~Shishikura.
\newblock Wandering domains for entire functions of finite order in the
  {E}remenko–{L}yubich class.
\newblock {\em Proceedings of the London Mathematical Society},
  120(2):155--191, 2020.

\bibitem[Mun00]{Munkres}
J.~R. Munkres.
\newblock {\em Topology \textit{(Second edition)}}.
\newblock Prentice Hall, Inc, 2000.

\bibitem[{Par}19]{mio_thesis}
L.~{Pardo Sim\'on}.
\newblock Dynamics of transcendental entire functions with escaping singular
  orbits.
\newblock {\em PhD thesis, University of Liverpool}, 2019.

\bibitem[{Par}21]{mio_signed_addr}
L.~{Pardo-Sim\'on}.
\newblock Combinatorics of criniferous entire maps with escaping critical
  values.
\newblock {\em Conform. Geom. Dyn.}, 25:51--78, 2021.

\bibitem[{Par}22a]{mio_newCB}
L.~{Pardo-Sim\'on}.
\newblock Criniferous entire maps with absorbing {C}antor bouquets.
\newblock {\em Discrete Contin. Dyn. Syst.}, 42(2):989--1010, 2022.

\bibitem[{Par}22b]{mio_orbifolds}
L.~{Pardo-Sim\'on}.
\newblock Orbifold expansion and entire functions with bounded {F}atou
  components.
\newblock {\em Ergodic Theory and Dynamical Systems}, 42(5):1807--1846, 2022.

\bibitem[{Par}22c]{mio_splitting}
L.~{Pardo-Sim\'on}.
\newblock Splitting hairs with transcendental entire functions.
\newblock {\em Int. Math. Res. Not.}, 2022.

\bibitem[Rem06]{lasseTopExp}
L.~Rempe.
\newblock Topological dynamics of exponential maps on their escaping sets.
\newblock {\em Ergodic Theory and Dynamical Systems}, 26(6):1939--1975, 2006.

\bibitem[Rem07]{lasse_nonlanding}
L.~Rempe.
\newblock On nonlanding dynamic rays of exponential maps.
\newblock {\em Ann. Acad. Sci. Fenn. Math.}, 32, 2007.

\bibitem[Rem09]{lasseRigidity}
L.~Rempe.
\newblock Rigidity of escaping dynamics for transcendental entire functions.
\newblock {\em Acta Math.}, 203(2):235--267, 2009.

\bibitem[{Rem}16]{lasse_arclike}
L.~{Rempe-Gillen}.
\newblock Arc-like continua, {J}ulia sets of entire functions, and {E}remenko's
  {C}onjecture.
\newblock {\em Preprint, arXiv:1610.06278v3}, 2016.

\bibitem[RRRS11]{RRRS}
G.~Rottenfu{\ss}er, J.~R\"{u}ckert, L.~Rempe, and D.~Schleicher.
\newblock Dynamic rays of bounded-type entire functions.
\newblock {\em Annals of Mathematics (2)}, 173(1):77--125, 2011.

\bibitem[RS08]{dierkCosine}
G.~Rottenfu{\ss}er and D.~Schleicher.
\newblock Escaping points of the cosine family.
\newblock In Philip~J. Rippon and Gwyneth~M. Stallard, editors, {\em
  Transcendental Dynamics and Complex Analysis}, pages 396--424. Cambridge
  University Press, 2008.
\newblock Cambridge Books Online.

\bibitem[RS12]{ripponFast}
P.~J. Rippon and G.~M. Stallard.
\newblock Fast escaping points of entire functions.
\newblock {\em Proc. Lond. Math. Soc. (3)}, 105(4):787--820, 2012.

\bibitem[Sch07]{dierkParadox}
D.~Schleicher.
\newblock The dynamical fine structure of iterated cosine maps and a dimension
  paradox.
\newblock {\em Duke Math. J.}, 136(2):343--356, 2007.

\bibitem[Sch10]{Schleicher_entire}
D.~Schleicher.
\newblock Dynamics of {E}ntire {F}unctions.
\newblock {\em In: Holomorphic Dynamical Systems, Lecture Notes in Math.},
  1998(2):295–339, 2010.

\bibitem[SZ03]{Schleicher_Zimmer_exp}
D.~Schleicher and J.~Zimmer.
\newblock Escaping points of exponential maps.
\newblock {\em Journal of the London Mathematical Society}, 67(2):380--400, 4
  2003.

\end{thebibliography}
\end{document}